\numberwithin{equation}{section}
\newtheorem{thm}{Theorem}[section]
\newtheorem{cor}[thm]{Corollary}
\newtheorem{prop}[thm]{Proposition}
\newtheorem{lem}[thm]{Lemma}
\theoremstyle{definition}
\newtheorem{defn}[thm]{Definition}
\theoremstyle{remark}
\newtheorem{rmk}[thm]{Remark}
\newtheorem{exam}[thm]{Example}
\newtheorem{obs}[thm]{Observation}
\newcommand{\category}{{$\infty$-cat\-e\-go\-ry}\xspace}
\newcommand{\categories}{$\infty$-cat\-e\-gories\xspace}
\newcommand{\categorical}{{$\infty$-cat\-e\-gor\-i\-cal}\xspace}
\newcommand{\Cech}{$\check{\text{C}}$ech\xspace}
\newcommand{\etale}{\'{e}tale\xspace}
\newcommand{\op}{{op}}
\newcommand{\act}{{act}}
\newcommand{\zz}{\mathbb{Z}}
\newcommand{\nn}{\mathbb{N}}
\newcommand{\sphere}{\mathbb{S}}
\newcommand{\brak}[1]{\langle #1 \rangle}
\newcommand{\msc}[1]{\mathscr{#1}}
\newcommand{\unit}{\mathbbm{1}}
\DeclareMathOperator{\Map}{Map}
\DeclareMathOperator{\Fun}{Fun}
\DeclareMathOperator{\Sp}{Sp}
\DeclareMathOperator{\Fin}{Fin}
\DeclareMathOperator{\Env}{Env}
\DeclareMathOperator{\Alg}{Alg}
\DeclareMathOperator{\CAlg}{CAlg}
\DeclareMathOperator{\CAlgTow}{CAlg^{Tow}}
\DeclareMathOperator{\CycSp}{CycSp}
\DeclareMathOperator{\ev}{ev}
\DeclareMathOperator{\Act}{act}
\DeclareMathOperator{\THH}{THH}
\DeclareMathOperator{\tow}{Tow}
\DeclareMathOperator{\THHDay}{THH^{Tow}}
\DeclareMathOperator{\TC}{TC}
\DeclareMathOperator{\Fil}{Fil}
\DeclareMathOperator{\gr}{gr}
\DeclareMathOperator{\May}{May}
\DeclareMathOperator{\Assoc}{Assoc}
\DeclareMathOperator*{\Day}{\circledast}
\title{The May filtration on THH and faithfully flat descent}
\author{Liam Keenan}
\begin{document}
\maketitle

\begin{abstract}
	In this paper, we prove that both topological Hochschild homology and topological cyclic homology are sheaves for the fpqc topology on connective commutative ring spectra, by exploiting the May filtration on topological Hochschild homology.  	
\end{abstract}

\tableofcontents

\section{Introduction}

Topological Hochschild homology ($\THH$) and topological cyclic homology ($\TC$), along with their Dennis and cyclotomic trace maps, have proved to be very useful tools for calculating algebraic $K$-theory. Algebraic K-theory has the nice property of being a sheaf for the Nisnevich topology; in practice, this allows one to decompose calculations into more manageable pieces. However, K-theory fails to satisfy the more computationally useful \etale descent. Fortunately, $\THH$ and $\TC$ do not have this deficiency; they have fpqc descent for commutative rings, by a theorem of Bhatt-Morrow-Scholze in \cite{BMS2}. Additionally, they have \etale descent for  $\mathbb{E}_{\infty}$-ring spectra by a theorem of Clausen-Mathew in \cite{clausen-mathew}. At this stage, it is reasonable to wonder whether it is possible to promote fpqc descent to ring spectra. Our work answers this in the affirmative, provided we work with connective commutative ring spectra, and our main theorem is the following: 

\begin{thm}\label{thm:maintheorem}
	Let $F = \THH$, $\THH(-)^{h\mathbb{T}}$, $\THH(-)^{hC_{p}}$, $\THH(-)_{hC_{p}}$, $\THH(-)^{tC_{p}}$, or $\TC$ viewed as functors $\CAlg_{\geq 0} \rightarrow \Sp$. Then $F$ satisfies descent for the fpqc topology on $\CAlg_{\geq 0}$. 	
\end{thm}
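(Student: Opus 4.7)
My plan is to reduce fpqc descent on $\CAlg_{\geq 0}$ to the case of ordinary commutative rings, where the result is the fpqc descent theorem of Bhatt--Morrow--Scholze \cite{BMS2}, using the May filtration on $\THH$ as the bridge. For each $F$ in the list, one constructs a natural, complete, exhaustive filtration $\Fil^\star F(R)$ on $F(R)$ whose associated graded $\gr^\star F(R)$ is expressible in terms of $F$ applied to a discrete (graded) commutative ring. In the base case $F = \THH$, the May filtration of the paper should arise from the Postnikov filtration $\tau_{\geq \star} R$ of a connective $R$, and its associated graded should be $\THH$ of the graded $H\mathbb{Z}$-algebra $\gr R = \bigoplus_n \Sigma^n H\pi_n(R)$, which in turn reduces to classical $\THH$ of the graded ordinary ring $\pi_\ast R$ with appropriate coefficients. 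For the six remaining functors, one equips the May filtration with its natural $\mathbb{T}$-equivariant and cyclotomic enhancement and transports it through the relevant homotopy fixed points, orbits, Tate construction, or $\TC$ formation.

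With such a filtration in hand, the descent argument becomes relatively mechanical. Given an fpqc cover $R \to S$ in $\CAlg_{\geq 0}$ with \Cech nerve $S^{\otimes_R \bullet +1}$, one applies the filtered refinement of $F$ to the \Cech nerve and asks whether the resulting descent square is an equivalence in filtered spectra. Since complete filtered spectra are detected on their associated gradeds, the descent map
\[ F(R) \longrightarrow \lim_{\Delta} F(S^{\otimes_R \bullet+1}) \]
is an equivalence provided (i) the May filtration is complete on both sides, and (ii) the induced map of associated gradeds is an equivalence. Condition (ii) unfolds to fpqc descent for $\THH$ (respectively its fixed-point variants) of the graded ordinary ring $\pi_\ast R$ with flat coefficients, which follows from the Bhatt--Morrow--Scholze theorem by an essentially formal extension from the ungraded to the graded setting.

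The main obstacle is condition (i): completeness of the May filtration after passage through the relevant fixed-point or Tate construction. For $F = \THH$, $\THH(-)^{h\mathbb{T}}$, and $\THH(-)^{hC_p}$ completeness follows from completeness on $\THH(R)$ together with the fact that limits preserve complete filtrations, given a suitable connectivity bound on the graded pieces. The delicate cases are $\THH(-)^{tC_p}$ and $\TC$: the Tate construction does not commute with limits and can destroy separatedness, so completeness of the Tate-filtered object must be verified directly. I expect the argument to rely on a uniform connectivity estimate for $\gr^n F(R)$ that grows linearly in $n$, so that it survives passage through $(-)^{tC_p}$, together with a careful setup of filtered cyclotomic spectra making the equalizer defining $\TC$ commute with the filtration. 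This interplay between the May filtration and the cyclotomic Frobenius is, I anticipate, the technical heart of the paper.
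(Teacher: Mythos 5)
Your overall skeleton for $\THH$ --- a complete multiplicative filtration whose associated graded is $\THH$ of a generalized Eilenberg--Mac Lane spectrum, plus the observation that a complete filtration with sheaf-valued graded pieces yields a sheaf --- is exactly the paper's strategy. But there is a genuine gap at the point you dismiss as ``an essentially formal extension from the ungraded to the graded setting.'' The associated graded of the May filtration is $\THH(H\pi_{\ast}A)$, where $H\pi_{\ast}A = \bigoplus_{n}\Sigma^{n}H\pi_{n}A$ is \emph{not} a discrete ring, so the Bhatt--Morrow--Scholze theorem does not apply to it directly, and no purely formal manipulation reduces it to the discrete case. This reduction is the technical heart of the paper's Section 3: one forms the bisimplicial \Cech diagram built from the pushout square relating $H\pi_{\ast}A \rightarrow H\pi_{\ast}B$ to $H\pi_{0}A \rightarrow H\pi_{0}B$, uses the Dundas--Rognes theorem (descent of $\THH$ along $1$-connected maps) to handle the vertical direction, uses Bhatt--Morrow--Scholze for the discrete bottom row, and propagates up the rows by a retract argument. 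Without an input of the Dundas--Rognes type (or a genuinely worked-out formality/shearing argument, which you do not supply), your condition (ii) is unproved.

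Your treatment of $\THH(-)^{tC_{p}}$ and $\TC$ also takes a much harder road than necessary, and the route you sketch --- verifying completeness of a Tate-filtered object directly and setting up filtered cyclotomic spectra --- is not what the paper does and is not needed. Once $\THH$ and $\THH(-)_{hC_{p}}$ are known to be sheaves (each via its own May filtration, the latter using that homotopy orbits preserve connectivity), the remaining functors follow formally: $\THH(-)^{h\mathbb{T}}$ and $\THH(-)^{hC_{p}}$ are limits of sheaves; $\THH(-)^{tC_{p}}$ sits in the cofiber sequence $\THH(-)_{hC_{p}} \rightarrow \THH(-)^{hC_{p}} \rightarrow \THH(-)^{tC_{p}}$ of functors to the stable \category $\Sp$, so it is a sheaf; and $\TC$ is handled by checking that $\THH\co\CAlg_{\geq 0}\rightarrow\CycSp$ is a sheaf componentwise on the pullback defining $\CycSp$, then using that $\TC = \Map_{\CycSp}(\mathbb{S}^{\mathrm{triv}},-)$ commutes with limits. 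I would recommend replacing your anticipated ``interplay between the May filtration and the cyclotomic Frobenius'' with these formal reductions, and concentrating your effort on the associated-graded descent statement, which is where the real work lies.
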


One would hope that our proof might follow the same strategy as for commutative rings, but working with spectra excludes this possibility. In \cite{BMS2}, $\THH$ is shown to have fpqc descent by a series of reductions. First, the authors reduce the problem to the case of Hochschild homology, by using the Postnikov tower of $\sphere$, and then reduce to the case of wedge powers of the cotangent complex, via the Hochschild-Konstant-Rosenberg (HKR) filtration on Hochschild homology. Finally, the case of the cotangent complex is handled by a theorem of Bhatt, a proof of which appears as \cite[3.1]{BMS2}. Since many of the tools used in \cite{BMS2} are unique to working with (simplicial) commutative rings, different methods became necessary. Our strategy exploits structural properties of the May filtration on $\THH$, as introduced in \cite{angelini-knoll-salch}. In particular, this filtration is Hausdorff, and its associated graded can be calculated via a canonical equivalence $\gr_{\ast}^{\May}\THH(A) \simeq \THH(H\pi_{\ast}A)$. By proving Theorem \ref{thm:maintheorem} in the case of generalized Eilenberg-Mac Lane spectra, we bootstrap to the general case using our control of the May filtration. 

We now provide a summary of our work. In Section 2, we carefully prove several folklore results on filtered spectra, all of which are undoubtedly known to experts. In particular, we prove that the "evaluate at zero" functor and the associated graded functor $\Fun(\mathbb{Z}^{\op}_{\geq 0},\Sp) \rightarrow \Sp$ are symmetric monoidal for the Day convolution, and that the Whitehead tower functor $\Sp \rightarrow \Fun(\zz^{\op}_{\geq0},\Sp)$ is also symmetric monoidal. In Section 3, we review some facts about \Cech conerves and prove a special case of our main theorem for generalized Eilenberg-Mac Lane spectra. In Section 4, we present an \categorical treatment of the May filtration on $\THH$ and use this to construct a similar filtration on the $C_{p}$-homotopy fixed points of $\THH$. Before doing so, we give a brief review of the symmetric monoidal envelope construction which appears in \cite[2.2.4]{HA}, which is crucial to the \categorical treatment of the May filtration. In Section 5, we turn to the proof of our main theorem, and use the May filtration in conjunction with our work in Section 3 to deduce the general result. 

Throughout this work, we freely use the theory of \categories, incarnated via quasicategories, as in \cite{HTT} and \cite{HA}. For consistency, wherever possible, we also follow the notation therein. Throughout, we refer to the \category of spectra by $\Sp$ and use $\otimes$ to denote the smash product. Additionally, we will use the notation $\Sp_{\geq 0}$ to denote connective spectra, and $\CAlg_{\geq 0}$ to denote connective commutative ring spectra. We will fix the following convention that a spectrum $X$ is $n$-connective provided that $\pi_{m}X \cong 0$ for $m < n$, and a map $f: X \rightarrow Y$ of spectra is $n$-connective provided that $\pi_{m}f$ is an isomorphism in degrees $m < n$ and a surjection when $n = m$. We will make frequent use of the groundbreaking work of Nikolaus-Scholze, providing an \categorical treatment of topological Hochschild homology, topological cyclic homology, and cyclotomic spectra, \cite[III.2.3, II.1.8, II.1.6]{nikolaus-scholze}. 

\textbf{Acknowledgements:} The author extends his greatest thanks to his Ph.D. advisor, Tyler Lawson. His insights, suggestions, and patience have been invaluable throughout the course of this work. The author would also like to thank John Rognes, for his suggestion to use the work of \cite{dundas-rognes}, and Martin Speirs and Joel Stapleton, for providing helpful feedback on a draft. Additionally, conversations with Micah Darrell and David Mehrle helped improve the author's understanding of this material.

\section{Filtered spectra}

Our main technical tool is the May filtration on $\THH$, as defined in \cite{angelini-knoll-salch}, which we must import into the \categorical context. To do so, we establish several folklore results about filtered (resp. $\mathbb{N}$-filtered) spectra. While the May filtration we consider is indexed by $\zz^{op}_{\geq 0}$, it will be technically useful to work with $\zz^{op}$-indexed filtrations as well. Unless otherwise specified, $\zz^{op}$ and $\zz^{op}_{\geq 0}$ carry the usual ordering, and by abuse, we use the same notation to refer to the nerves of these posets. 

\begin{defn}
	The \category of filtered spectra is $\Fil(\Sp) = \Fun(\mathbb{Z}^{op},\Sp)$, and the \category of $\mathbb{N}$-filtered spectra is $\tow(\Sp) = \Fun(\zz^{op}_{\geq 0},\Sp)$. 
\end{defn}

It is readily checked that both of these \categories are presentably symmetric monoidal and stable where the symmetric monoidal product in both cases is Day convolution; see \cite[5.5.3.6]{HTT}, \cite[1.1.3.1]{HA}, and \cite[2.13]{glasman2013day}. Throughout, we will let $\Day$ denote the Day convolution product and allow context to dictate exactly which product we mean. The unit object in $\Fil(\Sp)$, denoted by $\unit_{\Fil}$, is the filtered spectrum 
$$ \cdots \rightarrow 0 \rightarrow 0 \rightarrow \mathbb{S} \rightarrow \mathbb{S} \rightarrow \cdots $$
which is $\mathbb{S}$ in degrees $n \leq 0$ with identity maps in negative degree. The unit object in $\tow(\Sp)$, denoted by $\unit_{\tow}$, is the $\nn$-filtered spectrum which is $\mathbb{S}$ in degree $0$, and $0$ otherwise:
$$\cdots \rightarrow 0 \rightarrow 0 \rightarrow \mathbb{S}$$
Because $\Fil(\Sp)$ and $\tow(\Sp)$ are symmetric monoidal, we may consider the \categories of associative (resp. commutative) algebra objects, which we denote by $\Alg^{\Fil}$ (resp. $\CAlg^{\Fil}$) and $\Alg^{\tow}$ (resp. $\CAlgTow$). These categories will appear later when we work with the May filtration on $\THH$. The following are the technical claims we seek to prove: 

\begin{enumerate}
	\item The evaluation functor $\ev_{0} : \tow(\Sp) \rightarrow \Sp$ is symmetric monoidal;
	\item the associated graded functor $\gr_{\ast} : \tow(\Sp) \rightarrow \Sp$ is colimit-preserving and symmetric monoidal functor; and
	\item the Whitehead tower functor $\Sp \rightarrow \tow(\Sp)$ is lax monoidal. 	
\end{enumerate}

Before commencing, we will record several general results regarding $\Fil(\Sp)$, $\tow(\Sp)$, and their relationship to one another. 

\subsection{Properties of $\Fil(\Sp)$, $\tow(\Sp)$, and Day convolution}

Recall, that for $C$ a stable $\infty$-category, an object $X \in C$ is said to \emph{generate} $C$, provided that $\pi_{0}\Map_{C}(X,Y) \simeq \ast$ implies that $Y \simeq 0$. We will say that a collection of objects of $C$, $\{X_{i}\}_{i \in I}$, \emph{jointly generates} $C$, provided that $\pi_{0}\Map(X_{i},Y) \simeq \ast$ for all $i \in I$ implies that $Y \simeq 0$. 

\begin{lem}
	Let $K$ be a simplicial set and let $\ev_{k} : \Fun(K,\Sp) \rightarrow \Sp$ denote evaluation at $k \in K$. For all objects $k \in K$, the functors $\ev_{k}$ admit left adjoints, $L_{k} : \Sp \rightarrow \Fun(K,\Sp)$ given on vertices by $L_{k}X : k' \mapsto X^{\otimes \Map_{K}(k,k')}$. Additionally, the objects $S(k) = L_{k}\mathbb{S}$ have the following properties:	
	\begin{enumerate}
	\item $S(k)$ is compact for all $k \in K$;
	\item the $S(k)$'s jointly generate $\Fun(K,\Sp)$; and
	\item the collection $\{\Sigma^{n}S(k)\}_{n \in \zz, k \in K}$ generates $\Fun(K,\Sp)$ under small colimits. 		
	\end{enumerate}
\end{lem}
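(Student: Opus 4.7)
My plan is to obtain the left adjoint $L_k$ from the adjoint functor theorem, identify it explicitly with left Kan extension along the inclusion of the vertex $k$, and then read off properties (1)--(3) from this description.

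For existence of $L_k$, I would observe that since small limits in $\Fun(K,\Sp)$ are computed pointwise, $\ev_k$ preserves them; as both $\Sp$ and $\Fun(K,\Sp)$ are presentable (the latter by \cite[5.5.3.6]{HTT}), the adjoint functor theorem supplies a left adjoint $L_k$. To get the explicit formula, I would view $\ev_k$ as restriction along $\{k\} \hookrightarrow K$, so that $L_k$ is left Kan extension. Applying the pointwise colimit formula for left Kan extension, together with the canonical tensoring of $\Sp$ over spaces (under which the colimit of a constant $X$-valued diagram indexed by a space $S$ is $X \otimes \Sigma^{\infty}_{+} S$), recovers the formula $L_k X(k') \simeq X^{\otimes \Map_K(k,k')}$ stated in the lemma.

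Properties (1) and (2) both flow from the adjoint identity $\Map_{\Fun(K,\Sp)}(S(k), Y) \simeq \Omega^{\infty} \ev_k(Y)$. For compactness, $\ev_k$ preserves all small colimits (they are pointwise) and $\Omega^{\infty}$ preserves filtered colimits, so $\Map(S(k),-)$ preserves filtered colimits, giving (1). For joint generation, I would argue that if $\Map(S(k),Y)$ is trivial for every $k$ (after allowing shifts of $S(k)$ to access all homotopy groups of $\ev_k Y$), then $\ev_k Y \simeq 0$ for every vertex $k$ of $K$; since an object of a functor \category vanishes iff all its pointwise values do, this forces $Y \simeq 0$.

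Part (3) then follows from (1) and (2) via the standard compactly generated criterion: in a presentable stable \category, a set of compact objects generates under small colimits iff, after including all suspensions and desuspensions, it detects zero. The main subtlety I foresee is correctly implementing the pointwise left Kan extension formula for spectrum-valued functors and matching it with the stated expression in terms of $\Map_K(k,k')$; the remaining steps are essentially formal consequences of the adjunction.
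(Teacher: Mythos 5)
Your proposal is correct, and on parts (1)--(3) it runs essentially parallel to the paper: compactness from the fact that $\ev_{k}$ preserves colimits, joint generation from the adjunction identity $\Map_{\Fun(K,\Sp)}(S(k),Y)\simeq \Omega^{\infty}\ev_{k}(Y)$, and (3) by the compact-generation criterion modeled on \cite[1.4.4.2]{HA}. The one genuine divergence is how you identify $L_{k}$. You view $\ev_{k}$ as restriction along $\{k\}\hookrightarrow K$ and obtain $L_{k}$ as a left Kan extension, computed pointwise as the colimit of the constant $X$-valued diagram over the comma category $\{k\}\times_{K}K_{/k'}$, which models $\Map_{K}(k,k')$; the paper instead takes $k'\mapsto X^{\otimes \Map_{K}(k,k')}$ as a candidate and verifies the adjunction directly, via the end formula for mapping spectra in $\Fun(K,\Sp)$ together with the spectral co-Yoneda lemma. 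Your route is shorter and more conceptual, at the cost of needing the identification of the comma category with the mapping space (for a general simplicial set $K$ this is computed after fibrant replacement, a point both arguments leave implicit); the paper's route is a self-contained computation that never mentions Kan extensions. One further remark in your favor: the paper's stated definition of ``jointly generates'' only requires the vanishing of $\pi_{0}\Map$, which as literally written would only give $\pi_{0}\ev_{k}Y=0$; your parenthetical about allowing shifts of $S(k)$ is exactly the repair needed, and the paper's own proof silently makes the same move by treating $\Map(S(k),E_{\bullet})\simeq E_{k}$ as an equivalence of spectra.
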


\begin{proof}
	Since $\ev_{k}$ preserves small limits and colimits, it admits a left adjoint $L_{k} : \Sp \rightarrow \Fun(K,\Sp)$. The compactness of $S(k)$ follows from the fact that $\mathbb{S}$ is compact in $\Sp$ and $\ev_{k}$ preserves colimits. To explicitly identify $L_{k}$, we use the following chain of natural equivalences obtained from the end formula, which appears for example, in \cite[5.2]{laxcolimits}
	\begin{align*}
	\Map_{\Fun(K,\Sp)}(X^{\otimes \Map_{K}(k,-)},E_{\bullet}) & \simeq \underset{i \to j \in \text{Tw}(K)}{\varprojlim}\Map_{\Sp}\big(\underset{\Map_{K}(k,i)}{\varinjlim}  X, E_{j}\big) \\
	& \simeq \underset{i \to j \in \text{Tw}(K)}{\varprojlim} \  \underset{\Map_{K}(k,i)}{\varprojlim}	\Map_{\Sp}(X,E_{j}) \\
	& \simeq \Map_{\Sp}(X, \underset{i \to j \in \text{Tw}(K)}{\varprojlim} E_{j}^{\Map_{K}(k,i)})
	\end{align*}
	As the exponential object $E_{j}^{\Map_{K}(k,i)}$ is equivalent to the mapping spectrum $\underline{\Map}_{\Sp}(\Sigma^{\infty}_{+}\Map_{K}(k,i),E_{j})$, an application of the end formula and the spectral co-Yoneda lemma yield: 	
	\begin{align*}\Map_{\Sp}(X, \underset{i \to j \in \text{Tw}(K)}{\varprojlim} E_{j}^{\Map_{K}(k,i)})  & \simeq \Map_{\Sp}\left(X, \underline{\Map}_{\Fun(K,\Sp)}(\Sigma_{+}^{\infty}\Map_{K}(k,-), E_{\bullet}) \right) \\
	& \simeq \Map_{\Sp}(X,E_{k}).
	 \end{align*} 
	This proves that $L_{k}$ is given on vertices as claimed.	
	\par To prove the final assertions, note that as $\Map_{\Fun(K,\Sp)}(S(k),E_{\bullet}) \simeq E_{k}$, the collection $\{S(k)\}_{k \in K}$ jointly generates the stable $\infty$-category $\Fun(K,\Sp)$. This allows us to mimic the proof of \cite[1.4.4.2]{HA} to show that the objects $\Sigma^{n}S(k)$ generate $\Fun(K,\Sp)$ under small colimits.	
\end{proof}

\begin{exam}\label{exam:T(n)}
	In the case where $K = \zz^{op}$, the object $S(k)$ is the tower with $\mathbb{S}$ in degrees $\leq k$ and $0$ otherwise. The maps $S(k)_{n}  \rightarrow S(k)_{n-1}$ are the identity if both objects are $\mathbb{S}$. Similarly, if $K = \mathbb{Z}^{op}_{\geq 0}$, the objects $S(k)$ are given by $\mathbb{S}$ with identity maps in the range $[0,k]$ and $0$ otherwise.
\end{exam}

This next lemma will give us a convenient way to calculate Day convolution products of ($\nn$-)filtered spectra.

\begin{lem}\label{lem:CalcDay}
	Let $\oplus : \zz^{op} \times \zz^{op} \rightarrow \zz^{op}$ denote the monoidal product and let $A_{k} \subseteq B_{k} = (\zz^{op} \times \zz^{op}) \times_{\zz^{op}} \zz^{op}_{/k}$ denote the full subcategory of those pairs $(n,m)$ such that $k + 1 \geq n+ m \geq k$. Then the inclusion $A_{k} \subseteq B_{k}$ is cofinal.	
\end{lem}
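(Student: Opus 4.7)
My plan is to invoke Joyal's cofinality criterion (\cite[4.1.3.1]{HTT}): a functor of \categories $F: A \to B$ is cofinal exactly when each comma $\infty$-category $A \times_B B_{b/}$ is weakly contractible. Every object appearing in the statement is (the nerve of) a poset, so the comma fibers will be nerves of posets and weak contractibility becomes contractibility of the nerve of an ordinary poset. I would therefore fix $(n,m) \in B_k$ (so $n+m \geq k$) and analyze the explicit poset $C_{(n,m)} := A_k \times_{B_k} (B_k)_{(n,m)/}$, whose objects are those pairs $(n',m') \in \zz^2$ satisfying $n' \leq n$, $m' \leq m$, and $k \leq n' + m' \leq k+1$, with the order inherited from $\zz^{op} \times \zz^{op}$.

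Next I would decompose $C_{(n,m)}$ into its two antichains: the ``lower'' line $L = \{n'+m' = k\}$, parametrized by $P_j := (k-j, j)$ for $j$ in an interval determined by $n,m$, and the ``upper'' line $U = \{n'+m' = k+1\}$, parametrized by $Q_j := (k+1-j, j)$. No two distinct elements within $L$ (resp.\ within $U$) are comparable. A direct check of the order on $\zz^{op} \times \zz^{op}$ shows that from each $Q_j \in U$ there are morphisms to exactly two elements of $L$, namely $P_j$ and $P_{j-1}$, and both indices automatically lie in the allowed range whenever $Q_j$ does.

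This identifies the nerve of $C_{(n,m)}$ with a zigzag path of the form
$$P_{k-n} - Q_{k+1-n} - P_{k+1-n} - \cdots - Q_m - P_m,$$
which collapses to a single vertex in the degenerate case $n+m = k$. Either way, the resulting $1$-complex is a subdivided interval, hence contractible, and cofinality follows. The main obstacle here is not conceptual but notational: the interval bookkeeping for $j$ has to be handled carefully, and one must verify that the ``exactly two outgoing morphisms'' count remains correct at the boundary, particularly in the degenerate cases $n+m \in \{k, k+1\}$. Once that is done, the path description above is immediate and the argument concludes.
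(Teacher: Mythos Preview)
Your proposal is correct and follows essentially the same strategy as the paper: both apply the cofinality criterion \cite[4.1.3.1]{HTT} and identify the comma poset $A_k \times_{B_k} (B_k)_{(n,m)/}$ as a zigzag of spans from the $(k+1)$-line to the $(k)$-line (the paper phrases this as copies of $\Lambda^2_1$ glued along endpoints), with the case $n+m=k$ degenerating to a point. Your write-up is in fact a bit more explicit than the paper's about the index ranges and the verification that $P_{j-1}$ and $P_j$ stay in range whenever $Q_j$ does, which is exactly the boundary bookkeeping the paper leaves to the reader.
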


\begin{proof}
	This is a straightforward application of the \categorical version of Quillen's Theorem A, \cite[4.1.3.1]{HTT}. For $(r,s) \in B_{k}$, where $r+s > k$, the nerve of $A_{k} \times_{B_{k}} (B_{k})_{(r,s)/}$ is equivalent to finitely many copies of $\Lambda^{2}_{1}$ glued together along their initial and terminal vertices in a zig-zag pattern, hence contractible. If $r + s = k$, then the nerve of $A_{k} \times_{B_{k}} (B_{k})_{(r,s)/}$ is contractible. In either case, the nerve is contractible, so we may conclude by Quillen's Theorem A.	
\end{proof}

\begin{rmk}
	A virtually identical proof holds for $\zz^{op}_{\geq 0}$ as well. 	
\end{rmk}

Using Lemma \ref{lem:CalcDay}, we can now determine how the $L_{k}\sphere$'s interact through Day convolution. 

\begin{lem}\label{lem:T(n)}
	Let $K$ denote either $\zz^{op}$ or $\zz^{op}_{\geq 0}$. There is a natural equivalence 
	$$S(n) \Day S(m) \simeq S(n+m).$$	
\end{lem}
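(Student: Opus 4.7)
The plan is to combine the Day convolution formula with Lemma \ref{lem:CalcDay} to reduce to a tractable colimit, then evaluate it directly. Since Day convolution is the left Kan extension along the monoidal product $\oplus : K \times K \to K$, we have
$$(S(n) \Day S(m))(k) \simeq \colim_{(p,q) \in B_k} S(n)(p) \otimes S(m)(q),$$
and Lemma \ref{lem:CalcDay} lets me replace $B_k$ by the cofinal subcategory $A_k$. Recall from Example \ref{exam:T(n)} that $S(n)(p) \otimes S(m)(q) \simeq \sphere$ precisely when $p \leq n$ and $q \leq m$, and is zero otherwise. If $k > n + m$, then every $(p, q) \in A_k$ satisfies $p + q \geq k > n + m$, forcing the diagram to vanish identically and yielding $0 = S(n+m)(k)$. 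I assume henceforth $k \leq n + m$.

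Let $U \subseteq A_k$ be the full subcategory of pairs $(p, q)$ with $p \leq n$ and $q \leq m$. Morphisms in $A_k$ only decrease both coordinates, so $U$ is a cosieve: any morphism with source in $U$ has target in $U$. Consequently, the slice $U \times_{A_k} (A_k)_{/(p,q)}$ is empty whenever $(p,q) \notin U$, which identifies the diagram above as the left Kan extension along $U \hookrightarrow A_k$ of the constant diagram with value $\sphere$. Since left Kan extension is transitive with respect to taking colimits, this yields
$$(S(n) \Day S(m))(k) \simeq \colim_U \sphere,$$
where the latter denotes the colimit of the constant diagram at $\sphere$ indexed by $U$.

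To conclude, it remains to show this colimit is $\sphere$, i.e., that the nerve of $U$ is weakly contractible. In both $K = \zz^{op}$ and $K = \zz^{op}_{\geq 0}$, the category $U$ is a finite zigzag: its objects are distributed between the two rows $p + q = k$ and $p + q = k+1$, and each row-$(k+1)$ object $(p, k+1-p)$ maps precisely to the two adjacent row-$k$ objects $(p, k-p)$ and $(p-1, k+1-p)$, so the nerve is a subdivided interval. This is weakly contractible, so $\colim_U \sphere \simeq \sphere$, matching $S(n+m)(k)$. Naturality in $k$ follows from the naturality of each of the identifications above. The main obstacle is the cosieve reduction step and checking that $U$ remains a connected contractible interval in the $\zz^{op}_{\geq 0}$ case, where truncation at $p = 0$ or $q = 0$ can alter the endpoints of the zigzag but does not disconnect it.
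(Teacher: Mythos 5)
Your proof is correct and takes essentially the same route as the paper: both apply Lemma \ref{lem:CalcDay} to reduce to the zigzag colimit, split into the cases $k > n+m$ and $k \leq n+m$, and identify the nonvanishing locus as a connected zigzag of spheres with identity maps, whose colimit is $\sphere$. The only difference is that you make the final step rigorous via the cosieve/left Kan extension argument for discarding the zero terms, which the paper simply asserts.
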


\begin{proof}
	This is a straightforward, though tedious, computation. By Lemma \ref{lem:CalcDay}, we can compute $\left(S(n) \Day S(m)\right)_{k}$ as the colimit of the diagram
	\begin{center}
		\begin{tikzcd}
		& S(n)_{s} \otimes S(m)_{k+1-s} \arrow[d] \arrow[dl] & S(n)_{s+1} \otimes S(m)_{k-s} \arrow[dl] \arrow[d] & \arrow[dl] S(n)_{s+2} \otimes S(m)_{k-s-1} \arrow[d] & \arrow[dl] \cdots \\
		\cdots & S(n)_{s} \otimes S(m)_{k-s} & S(n)_{s+1} \otimes S(m)_{k-s-1} & S(n)_{s+2} \otimes S(m)_{k-s-2} \\	
		\end{tikzcd}		
	\end{center}
	\noindent The objects are either of the form $S(n)_{s} \otimes S(m)_{k-s}$ or $S(n)_{s} \otimes S(m)_{k+1-s}$. When $k > n+m$, both $S(n)_{s} \otimes S(m)_{k-s}$ and $S(n)_{s} \otimes S(m)_{k+1-s}$ are trivial, and hence the colimit of the diagram above is $0$. In the case where $k \leq n + m$, the diagram is nonzero in a range: between the objects $S(n)_{k-m} \otimes S(m)_{k-(k-m)}$ and $S(n)_{n} \otimes S(m)_{k-n}$. In fact, in this range, all objects are given by $\mathbb{S}$ and all maps between nonzero objects are equivalent to $id : \mathbb{S} \rightarrow \mathbb{S}$, by Example \ref{exam:S(n)}. This implies that the colimit of the relevant diagram is given by $\mathbb{S}$. In summary, $S(n) \Day S(m)$ is zero in degrees $k > n+m$ and and given by $\sphere$ with identity maps in degrees $k \leq n+m$. 
\end{proof}

\begin{prop}\label{prop:FilTow}
	Let $\msc{E} \subseteq \Fil(\Sp)$ denote the full subcategory of those filtered spectra with the property that the maps $X_{n} \rightarrow X_{n - 1}$ are equivalences for $n \leq 0$. Then, $\msc{E}$ is a symmetric monoidal subcategory of $\Fil(\Sp)$, and the restriction map $\Fil(\Sp) \rightarrow \tow(\Sp)$ induced by the inclusion $\zz^{\op}_{\geq 0} \subseteq \zz^{op}$ is a symmetric monoidal equivalence $\msc{E} \rightarrow \tow(\Sp)$. 
\end{prop}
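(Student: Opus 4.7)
The strategy is to exhibit the inclusion $\msc{E} \hookrightarrow \Fil(\Sp)$ as the essential image of a symmetric monoidal, fully faithful left adjoint to $i^*$; this will simultaneously give both that $\msc{E}$ is a symmetric monoidal subcategory of $\Fil(\Sp)$ and that $i^*|_{\msc{E}}$ is a symmetric monoidal equivalence.

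First, I would observe that the symmetric monoidal inclusion of posets $j : \zz^{\op}_{\geq 0} \hookrightarrow \zz^{\op}$ admits a right adjoint $\pi : \zz^{\op} \to \zz^{\op}_{\geq 0}$ sending $n \mapsto \max(n, 0)$. Consequently, for each $n \in \zz^{\op}$ the slice $(j \downarrow n)$ has terminal object $\max(n, 0)$, and the left Kan extension along $j$ is computed by the closed formula $j_!(Y)_n = Y_{\max(n,0)}$, with identity structure maps in negative degrees. Since $\pi \circ j = \mathrm{id}$, the unit $\mathrm{id} \to j^* j_!$ is an equivalence, so the adjunction $j_! \dashv j^* = i^*$ exhibits $j_!$ as fully faithful. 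The counit $j_! i^* X \to X$ is the identity in nonnegative degrees and the iterated structure map $X_0 \to X_n$ in degrees $n < 0$; this is an equivalence precisely when the maps $X_n \to X_{n-1}$ are equivalences for all $n \leq 0$, so the essential image of $j_!$ is exactly $\msc{E}$.

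To upgrade to the symmetric monoidal setting, I would invoke the standard fact that for a symmetric monoidal functor $f$ between small symmetric monoidal \categories and a presentable symmetric monoidal target, the left Kan extension $f_!$ is symmetric monoidal with respect to Day convolution (see, for instance, \cite[2.13]{glasman2013day} or the relevant material in \cite{HA}). Applied to our strictly symmetric monoidal $j$, this shows that $j_! : \tow(\Sp) \to \Fil(\Sp)$ is symmetric monoidal. The essential image of a fully faithful symmetric monoidal functor is automatically a symmetric monoidal subcategory: for $X \simeq j_! A$ and $Y \simeq j_! B$ in $\msc{E}$, we have $X \Day Y \simeq j_!(A \Day B) \in \msc{E}$, and $\unit_{\Fil} \simeq j_!(\unit_{\tow})$ lies in $\msc{E}$ as can be seen directly from its definition. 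Combined with the equivalence of underlying \categories from the previous paragraph, this shows $i^*|_{\msc{E}}$ is a symmetric monoidal equivalence with inverse $j_!$.

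The main technical point is pinning down the symmetric monoidality of $j_!$ for Day convolution; once that is in hand, everything else reduces to the direct computations with the explicit formula for $j_!$ and the adjunction manipulations above.
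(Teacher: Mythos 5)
Your proposal is correct, but it reaches the conclusion by a genuinely different route than the paper. Both arguments agree on the underlying equivalence: the paper invokes \cite[4.3.2.15]{HTT} to identify $\msc{E}$ with $\tow(\Sp)$ via restriction, while you make the same identification explicit through the adjunction $j_! \dashv j^*$, the closed formula $j_!(Y)_n = Y_{\max(n,0)}$, and the observation that the counit is an equivalence exactly on $\msc{E}$; these are two phrasings of the same Kan-extension fact. The real divergence is in how symmetric monoidality is established. The paper works with the compact generators $S(n)$: it reduces closure of $\msc{E}$ under $\Day$ and the monoidality of the restriction $\theta$ to the single computation $S(n) \Day S(m) \simeq S(n+m)$ of its Lemma \ref{lem:T(n)}, using that $\Day$ preserves colimits separately in each variable. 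You instead cite the general fact that left Kan extension along a symmetric monoidal functor is symmetric monoidal for Day convolution, from which both claims follow formally since $\msc{E}$ is the essential image of the fully faithful symmetric monoidal $j_!$. Your route is more conceptual and shorter, but it shifts the burden onto that general statement, and your suggested citation is off: \cite[2.13]{glasman2013day} asserts that Day convolution preserves colimits separately in each variable, not the monoidality of $f_!$; you would need to point to the universal property of Day convolution as a free presentably symmetric monoidal cocompletion (e.g.\ the relevant material in \cite{HA}) or prove the functoriality yourself. The paper's generator argument buys self-containedness at the cost of an explicit computation it has already done in Lemma \ref{lem:T(n)}; yours buys brevity and generality at the cost of a reference that must be pinned down precisely.
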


\begin{proof}
	By \cite[4.3.2.15]{HTT}, there is an equivalence of $\infty$-categories $\theta: \msc{E} \rightarrow \tow(\Sp)$ given by precomposing with the inclusion $\mathbb{Z}^{op}_{\geq 0} \subseteq \mathbb{Z}^{op}$. We will prove that $\msc{E}$ is a symmetric monoidal subcategory of $\Fil(\Sp)$ and that $\theta$ is symmetric monoidal. 
	
	Note that $S(n)$ in $\msc{E}$ restricts to $S(n)$ in $\tow(\Sp)$. Since the restriction map $\theta: \msc{E} \rightarrow \tow(\Sp)$ is an equivalence and compatible with the formation of iterated suspensions, we find that the desuspensions of the objects $S(n)$, where $n \geq 0$, generate $\msc{E}$ under small colimits. Thus, to show that $\msc{E}$ is a symmetric monoidal subcategory, we can immediately reduce to showing that $\Sigma^{s}S(n) \Day \Sigma^{t}S(m) \in \msc{E}$ since $\Day$ commutes with colimits separately in each variable. Additionally, since $\Sigma^{r} : \Fil(\Sp) \rightarrow \Fil(\Sp)$ is an equivalence for all $r \in \zz$, the result follows from Lemma \ref{lem:T(n)}. 
	
	It remains to prove that the canonical map
	$$\theta_{E,F} : \theta(E_{\bullet}) \Day \theta(F_{\bullet}) \rightarrow \theta(E_{\bullet} \Day F_{\bullet})$$
	\noindent is an equivalence for all objects in $\msc{E}$. Fix $E_{\bullet}$ and let $\msc{X}$ denote the full stable subcategory of $\msc{E}$ consisting of those towers $F_{\bullet}$ for which $\theta_{E,F}$ is an equivalence. Since this category is closed with respect to colimits, we can reduce to checking the claim for objects of the form $S(n)$ where $n \geq 0$. However, the natural map
	$$\theta(S(n)) \Day \theta(S(m)) \rightarrow \theta(S(n) \Day S(m)) \simeq \theta(S(n+m))$$
	\noindent is an equivalence by Lemma \ref{lem:T(n)}. This implies that $\theta$ is symmetric monoidal and hence $\tow(\Sp) \rightarrow \msc{E} \subseteq \Fil(\Sp)$ is symmetric monoidal. 
\end{proof}

For the remainder of Section 2, we proceed with the proofs of the desired folklore claims.

\subsection{The evaluation functor}

It will be convenient to follow the notation appearing in \cite{glasman2013day}, the relevant parts of which we now recall.

\begin{defn}
	Let $C^{\otimes}$ and $D^{\otimes}$ be two symmetric monoidal \categories. Define $\overline{\Fun(C,D)^{\otimes}}$ to be the simplicial set over $\Fin_{\ast}$ defined by the universal property
	$$ \Fun_{\Fin_{\ast}}(K,\overline{\Fun(C,D)^{\otimes}}) = \Fun_{\Fin_{\ast}}(C_{k}^{\otimes},D^{\otimes})$$
	\noindent where $C_{k}^{\otimes}$ is the pullback of $C^{\otimes} \rightarrow \Fin_{\ast}$ along the given structure map $k: K \rightarrow \Fin_{\ast}$. This simplicial set is an \category by \cite[2.3]{glasman2013day}. 
\end{defn}

\begin{obs}\label{obs:FinDecomp}
	If $S \in \Fin_{\ast}$, we let $S^{o}$ denote $S \setminus \{\ast\}$. Let $C^{\otimes}$ be a symmetric monoidal \category and let $(\Fin_{\ast})^{\act}_{/S}$ denote the full subcategory of $(\Fin_{\ast})_{/S}$ spanned by the active morphisms to $S$. Then for any $S \in \Fin_{\ast}$ there is a canonical product decomposition
	$$(\Fin_{\ast})_{/S} \simeq \left(\prod_{s \in S^{o}} (\Fin_{\ast})^{\act}_{/\{s\}_{+}}\right) \times \Fin_{\ast}$$
	\noindent This implies that for any morphism $f : S \rightarrow T$ in $\Fin_{\ast}$, there is a canonical decomposition 
	$$C_{f}^{\otimes} \simeq \left( \prod_{\Delta^{1},t \in T^{o}} C^{\otimes}_{\mu_{f^{-1}(t)_{+}}}\right) \times C^{\otimes}_{\beta_{f^{-1}(\ast)}}.$$
	\noindent For $V \in \Fin_{\ast}$, the map $\mu_{V}$ is given by the active map $V \rightarrow \langle 1 \rangle$ if $V$ is nonempty, and the inclusion $\ast \rightarrow \langle 1 \rangle$ if $V$ is empty. The map $\beta_{V}$ denotes the unique map $V \rightarrow \ast$. One can check that this decomposition of $C_{f}^{\otimes}$ is compatible with the decompositions of $C^{\otimes}_{S}$ and $C^{\otimes}_{T}$.	
\end{obs}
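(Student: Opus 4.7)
The plan is to prove both statements by unwinding definitions carefully. For the first equivalence, I would construct explicit functors in both directions and verify they are inverse. The forward functor sends a map $\varphi\co V \to S$ of pointed finite sets to the tuple $((\varphi^{-1}(s)_+ \to \{s\}_+)_{s \in S^o}, \varphi^{-1}(*))$, where $\varphi^{-1}(s)_+$ denotes the unpointed set $\varphi^{-1}(s)$ with an added basepoint (which is the unique preimage of the basepoint of $\{s\}_+$, making the resulting map active). The backward functor sends a tuple $((\psi_s\co W_s \to \{s\}_+)_{s \in S^o}, W_*)$ to the wedge $W_* \vee \bigvee_{s \in S^o} W_s$ together with the obvious map to $S$. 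Since these categories are (nerves of) ordinary 1-categories, the equivalence reduces to the observation that any pointed map of pointed finite sets is determined by its fibers, and any compatible tuple of fibers assembles to such a map; morphisms in $(\Fin_*)_{/S}$ decompose componentwise into morphisms between the respective fibers.

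For the second statement, I would deduce it from the first together with the Segal condition for $C^{\otimes} \to \Fin_{\ast}$. Recall that $C^{\otimes}_{f}$ is the pullback of $C^{\otimes}$ along the edge $f\co \Delta^1 \to \Fin_{\ast}$, so its objects are lifts of $f$ to $C^{\otimes}$. By Lurie's definition of a symmetric monoidal \category, the inert morphisms induce equivalences $C^{\otimes}_{S} \simeq \prod_{s \in S^o} C^{\otimes}_{\{s\}_+}$ and similarly for $T$, so specifying a lift of $f$ is equivalent to specifying, for each $t \in T^o$, a lift of the restriction of $f$ to the preimage-summand in $S$ mapping to $\{t\}_+$, together with a lift of the collapse of the remaining summand indexed by $f^{-1}(*)$.

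These restrictions factor through the active map $\mu_{f^{-1}(t)_+}\co f^{-1}(t)_+ \to \{t\}_+$ (up to composition with an inert isomorphism on the source) and the unique map $\beta_{f^{-1}(*)}\co f^{-1}(*) \to \ast$ respectively; this factorization corresponds precisely to the decomposition $(\Fin_*)_{/S} \simeq \bigl(\prod_{s \in S^o} (\Fin_*)^{\act}_{/\{s\}_+}\bigr) \times \Fin_*$ applied to the source variable of morphisms over $f$. Pulling back $C^{\otimes}$ along each factor yields the claimed product decomposition of $C^{\otimes}_f$. Compatibility with the decompositions of $C^{\otimes}_S$ and $C^{\otimes}_T$ amounts to checking that the source and target projections $C^{\otimes}_f \to C^{\otimes}_S, C^{\otimes}_T$ respect the factorization, which is immediate from the construction since the active pieces feed into the $t \in T^o$ components of $C^{\otimes}_T$ via active/inert factorization and the $\beta$ piece is unital.

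The main obstacle I anticipate is not the content — which is formal after unwinding — but the bookkeeping required to make the equivalences genuinely natural and compatible at the \categorical level rather than merely on objects. I would manage this by phrasing both decompositions as equivalences of right fibrations over $\Fin_{\ast}$ (respectively, of \categories over an edge of $\Fin_{\ast}$), so that the compatibilities are forced by the universal properties of the pullbacks, rather than verified by hand.
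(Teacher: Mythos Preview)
The paper states this as an Observation without proof; there is no argument in the paper to compare against. Your proposal is correct and supplies exactly the kind of unwinding the paper leaves implicit: the fiberwise decomposition of maps in $\Fin_\ast$ for the first equivalence, and the Segal condition for $C^\otimes$ to pass from that to the decomposition of $C^\otimes_f$.
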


\begin{defn}\cite[2.8]{glasman2013day}
	Let $C^{\otimes}$ and $D^{\otimes}$ be symmetric monoidal \categories so that the tensor product of $D^{\otimes}$ preserves colimits separately in each variable. The \emph{Day convolution symmetric monoidal \category} is the largest simplicial subset of $\overline{\Fun(C,D)^{\otimes}}$ whose vertices over $S$ correspond to functors $F : C^{\otimes}_{S} \rightarrow D^{\otimes}_{S}$ which lie in the essential image of the inclusion
	$$ \prod_{s \in S^{o}} \Fun(C,D) \rightarrow \Fun(C_{S}^{\otimes},D_{S}^{\otimes}) $$
	\noindent and whose edges over $f : S \rightarrow T$ correspond to functors $F : C_{f}^{\otimes} \rightarrow D_{f}^{\otimes}$ in the essential image of the inclusion
	$$\prod_{t \in T^{o}} \Fun_{\Delta^{1}}(C^{\otimes}_{\mu_{f^{-1}(t)_{+}}}, D^{\otimes}_{\mu_{f^{-1}(t)_{+}}}) \times \Fun_{\Delta^{1}}(C^{\otimes}_{\beta_{f^{-1}(\ast)}}, D^{\otimes}_{\beta_{f^{-1}(\ast)}}) \rightarrow \Fun_{\Delta^{1}}(C^{\otimes}_{f},D^{\otimes}_{f})$$
	
	\noindent By \cite[2.10]{glasman2013day}, this is a symmetric monoidal \category, and in the case where $C^{\otimes}$ also preserves colimits separately, Day convolution preserves colimits separately in each variable \cite[2.13]{glasman2013day}. 
\end{defn}

\noindent With our notation set, we are ready to prove that $\ev_{0} :  \Fun(\zz^{op}_{\geq0},\Sp) \rightarrow \Sp$ is symmetric monoidal.

\begin{prop}\label{prop:Daycofinal}
	Let $C^{\otimes}, D^{\otimes}$ and $E^{\otimes}$ be symmetric monoidal $\infty$-categories where the tensor product of $E^{\otimes}$ preserves colimits separately in each variable, and let $F^{\otimes} : C^{\otimes} \rightarrow D^{\otimes}$ be a symmetric monoidal functor. Then there is an induced "pullback" functor
	\[ F^{\ast} : \Fun(D,E)^{\otimes} \rightarrow \Fun(C,E)^{\otimes} \]
	given by precomposition with $F^{\otimes}$. The "pullback" functor $\ev_{0}^{\otimes}$, which is induced by $0 : \Delta^{0} \rightarrow \zz^{op}_{\geq 0}$, is symmetric monoidal. 
\end{prop}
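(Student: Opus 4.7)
The plan is to proceed in three stages. First, build the pullback $F^{\ast}$ as a functor over $\Fin_{\ast}$ for an arbitrary symmetric monoidal $F^{\otimes}$; second, verify that $F^{\ast}$ respects the Day convolution subcategory of $\overline{\Fun(-,E)^{\otimes}}$; and third, use a short colimit computation, specific to $F = 0$, to show that $\ev_0^{\ast}$ preserves coCartesian edges. The content lives in the final step and rests on the fact that $0$ is the terminal object of $\zz^{op}_{\geq 0}$.

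To construct $F^{\ast}$, I would invoke the universal property of $\overline{\Fun(-,E)^{\otimes}}$. For every simplicial set $K$ with structure map $k \co K \to \Fin_{\ast}$, a functor $K \to \overline{\Fun(D,E)^{\otimes}}$ over $\Fin_{\ast}$ is exactly a functor $D_k^{\otimes} \to E^{\otimes}$ over $\Fin_{\ast}$. Because $F^{\otimes}$ itself lies over $\Fin_{\ast}$, pullback produces $F_k^{\otimes} \co C_k^{\otimes} \to D_k^{\otimes}$, and composition yields a functor $C_k^{\otimes} \to E^{\otimes}$ over $\Fin_{\ast}$. By Yoneda this family of natural transformations assembles into the desired $F^{\ast} \co \overline{\Fun(D,E)^{\otimes}} \to \overline{\Fun(C,E)^{\otimes}}$ over $\Fin_{\ast}$.

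To see that $F^{\ast}$ restricts to the Day convolution subcategories, I would combine this with Observation \ref{obs:FinDecomp}. Because $F^{\otimes}$ is symmetric monoidal, the induced map $F_S^{\otimes} \co C_S^{\otimes} \to D_S^{\otimes}$ respects the canonical product decomposition componentwise. Consequently, if a functor $D_S^{\otimes} \to E_S^{\otimes}$ lies in the essential image of the inclusion $\prod_{s \in S^o} \Fun(D,E) \to \Fun(D_S^{\otimes}, E_S^{\otimes})$, then its precomposition with $F_S^{\otimes}$ lies in the corresponding essential image for $C$; a parallel argument using the active/unique-preimage decomposition of $C_f^{\otimes}$ handles the condition on edges.

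For the symmetric monoidality of $\ev_0^{\otimes}$, one must show that $\ev_0^{\ast}$ preserves coCartesian edges of $\Fun(-,E)^{\otimes} \to \Fin_{\ast}$. Factoring an arbitrary morphism in $\Fin_{\ast}$ as an inert followed by an active map and appealing again to Observation \ref{obs:FinDecomp}, this reduces to showing that for $G_1, G_2 \in \Fun(\zz^{op}_{\geq 0}, E)$ the canonical comparison $(G_1 \circ 0) \otimes (G_2 \circ 0) \to (G_1 \Day G_2) \circ 0$ is an equivalence. Computing the Day convolution as a left Kan extension,
\[ (G_1 \Day G_2)(0) \simeq \colim_{(n,m) \in (\zz^{op}_{\geq 0} \times \zz^{op}_{\geq 0}) \times_{\zz^{op}_{\geq 0}} (\zz^{op}_{\geq 0})_{/0}} G_1(n) \otimes G_2(m). \]
Every pair $(n,m)$ satisfies $n + m \geq 0$ and so admits a unique map $n+m \to 0$; the indexing category is therefore all of $\zz^{op}_{\geq 0} \times \zz^{op}_{\geq 0}$, which has terminal object $(0,0)$, and the colimit collapses to $G_1(0) \otimes G_2(0)$, as needed. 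The main obstacle I expect is the bookkeeping required to track how the product decompositions of Observation \ref{obs:FinDecomp} interact through Glasman's construction in order to reduce coCartesian-edge preservation to this binary statement; the colimit computation itself is short.
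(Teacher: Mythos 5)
Your proposal is correct and follows essentially the same route as the paper's proof: the pullback is built from the universal property of $\overline{\Fun(-,E)^{\otimes}}$, membership in the Day convolution subcategory is checked on vertices and edges via the product decompositions of Observation \ref{obs:FinDecomp}, and coCartesian-edge preservation comes down to the fact that $0$ is terminal in $\zz^{op}_{\geq 0}$, so that $(0,\dots,0)$ is terminal in the relevant indexing category and the operadic colimit collapses to evaluation there. The paper packages this last point as a cofinality statement about the inclusion $\Delta^{0}_{/0} \rightarrow (K^{\otimes}_{S})_{/\tilde{f}(0)}$ inside the $p$-left Kan extension criterion of \cite[2.10]{glasman2013day} rather than as your direct computation of $(G_{1} \Day G_{2})(0)$, but the mathematical content, including the bookkeeping you anticipate, is the same.
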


\begin{proof}
	The universal property of $\overline{\Fun(D,E)^{\otimes}}$ from \cite[2.1]{glasman2013day}, immediately implies that $F^{\otimes}$ induces a functor of $\infty$-categories
	\[ \overline{F^{\ast}} : \overline{\Fun(D,E)^{\otimes} }\rightarrow \overline{\Fun(C,E)^{\otimes}}. \]
	
	\noindent It now suffices to check that the restriction of $\overline{F^{\ast}}$ to $\Fun(D,E)^{\otimes}$ factors through $\Fun(C,E)^{\otimes}$. 
	
	The vertices of $\Fun(D,E)^{\otimes}$ over $S \in \Fin_{\ast}$ are precisely those functors $D_{S}^{\otimes} \rightarrow E^{\otimes}_{S}$ in the essential image of the inclusion $\Fun(D,E)^{S} \rightarrow \Fun(D_{S}^{\otimes},E_{S}^{\otimes})$. Since $F^{\otimes}$ is symmetric monoidal, $F^{\otimes}_{S} \simeq F^{S}$, so the vertices of $\Fun(D,E)^{\otimes}$ are sent to the vertices of $\Fun(C,E)^{\otimes}$. Similarly, the edges are functors $F_{f}^{\otimes} : D^{\otimes}_{f} \rightarrow E_{f}^{\otimes}$ in the essential image of the map
	\[ \left( \prod_{t \in T^{o}} \Fun_{\Delta^{1}}(D_{\mu_{S_{t}^{+}}}^{\otimes}, E^{\otimes}_{\mu_{S_{t}^{+}}})\right) \times \Fun(D^{\otimes}_{\beta_{S_{\ast}}}, E^{\otimes}_{\beta_{S_{\ast}}}) \rightarrow \Fun_{\Delta^{1}}(D^{\otimes}_{f},E^{\otimes}_{f}),\]
	
	\noindent where $S_{t}^{+} = f^{-1}(\{t\})_{+}$ and $S_{\ast} = f^{-1}(\{\ast\})$. As before, 
	$F^{\otimes}_{f} \simeq \left(\prod_{\Delta^{1}, t \in T^{o}} F^{\otimes}_{\mu_{S_{t}^{+}}} \right) \times F^{\otimes}_{\beta_{S_{\ast}}}$, which means the edges in $\Fun(D,E)^{\otimes}$ are sent to the edges in $\Fun(C,E)^{\otimes}$. This implies we have a well-defined functor $F^{\ast}$ as claimed.

	Let $K^{\otimes}$ denote the nerve of the category of operators of $\zz_{\geq 0}^{op}$ (this construction appears in  \cite[2.0.0.2]{HA}, for example). The map $0 : \Delta^{0} \rightarrow \zz_{\geq0}^{\op}$ induces a functor $e: \Fin_{\ast} \rightarrow K^{\otimes}$ which induces a "pullback" $\ev_{0}^{\otimes} : \tow(E)^{\otimes} = \Fun(\mathbb{\zz}_{\geq 0}^{op},E)^{\otimes} \rightarrow \Fun(\Delta^{0},E)^{\otimes} \simeq E^{\otimes}$. To show that $\ev_{0}^{\otimes}$ is symmetric monoidal, we must show that it takes cocartesian edges to cocartesian edges. By \cite[2.10]{glasman2013day}, the cocartesian edges over $f: S \rightarrow T$ in $\tow(E)^{\otimes}$ are precisely those functors $F_{f} : K^{\otimes}_{f} \rightarrow E_{f}^{\otimes}$ whose decomposition components are $p$-left Kan extensions. As $\ev_{0}^{\otimes}(F_{f})$ is the composite
	\begin{center}
		\begin{tikzcd}	
		\Delta^{1} \cong \Fin_{\ast} \times_{\Fin_{\ast}} \Delta^{1} \arrow[r] & K^{\otimes} \times_{\Fin_{\ast}} \Delta^{1} \arrow[r,"F_{f}"] & E_{f}^{\otimes} \arrow[r] & E^{\otimes},
		\end{tikzcd}		
	\end{center} 
	\noindent by \cite[4.3.1.4]{HTT} and \cite[4.3.1.15]{HTT} it suffices to show that the diagram 
	
	\begin{center}
		\begin{tikzcd}
		\Delta^{0} \arrow[r,"e_{S}"] \arrow[d] & K^{\otimes}_{S} \arrow[d] \arrow[r,"F_{S}"] & E^{\otimes}_{f} \arrow[d,"p"] \\
		\Delta^{1} \arrow[r,"e_{f}"] & K^{\otimes}_{f} \arrow[r] \arrow[ur,"F_{f}"] & \Delta^{1}	
		\end{tikzcd}		
	\end{center}
	
	\noindent exhibits $\ev_{0}^{\otimes}(F_{f})$ as the $p$-colimit of $F_{S} \circ e_{S}$. By assumption, we may decompose $F_{f}$ into a product of active and inert pieces, all of which are relative left Kan extensions. Furthermore, since the formation of overcategories commutes with pullbacks, we only need show that $\ev_{0}^{\otimes}(F_{g})$ is a $p$-colimit, where $g$ is one of the active or inert parts of $f$, as in Observation \ref{obs:FinDecomp}. Therefore, the result will follow if we can prove the following claim -- Let $f :S \rightarrow T$ be a morphism in $\Fin_{\ast}$, and let $F_{f} : K^{\otimes}_{f} \rightarrow E^{\otimes}_{f}$ be $p$-left Kan extension of $F_{f}|_{K^{\otimes}_{S}}$, exhibited by the diagram:
	
	\begin{center}
		\begin{tikzcd}
		K^{\otimes}_{S} \arrow[d] \arrow[r] & E^{\otimes}_{f} \arrow[d,"p"] \\
		K^{\otimes}_{f} \arrow[r] \arrow[ur] & \Delta^{1} 
		\end{tikzcd}
	\end{center}
	\noindent Then, $F_{f} \circ e_{f} : \Delta^{1} \rightarrow E_{f}^{\otimes}$ is a $p$-left Kan extension of $F_{f}|_{K^{\otimes}_{S}} \circ e_{S} : \Delta^{0} \rightarrow E_{f}^{\otimes}$ along the source inclusion $\Delta^{0} \subseteq \Delta^{1}$. To prove this, note that the edge $\Fin_{\ast} \times_{\Fin_{\ast}} \Delta^{1} \rightarrow K^{\otimes}_{f}$ is a lift of $f : S \rightarrow T$, with source $(0,\dots,0) \in N(\zz_{\geq 0}^{op})^{S} \simeq K_{S}^{\otimes}$. The structure of $K^{\otimes}$ forces the target of $\tilde{f}$ to be $(0,\dots,0) \in N(\zz_{\geq 0}^{op})^T \simeq K^{\otimes}_{T}$. The functors $\Delta^{0}_{/0} \rightarrow (K^{\otimes}_{S})_{/\tilde{f}(0)}$ and $\Delta^{0}_{/1} \rightarrow (K^{\otimes}_{S})_{/\tilde{f}(1)}$ are cofinal by \cite[4.1.2.6]{HTT}, as everything in sight is a contractible Kan complex. This shows that all the relevant induced diagrams are $p$-colimits, so that $\Delta^{1} \rightarrow E_{f}^{\otimes}$ is a $p$-left Kan extension, and hence a cocartesian arrow.
\end{proof}

\subsection{The associated graded functor}\label{section:gr}

In this section, we establish that the associated graded functor $\gr_{\ast} : \tow(\Sp) \rightarrow \Sp$ is symmetric monoidal by exploiting the result for $\Fil(\Sp)$, established in \cite[3.2.1]{lurie-k-rotation}. Let $(\zz^{\op})^{\text{disc}}$ be $\zz^{\op}$ endowed with the discrete topology, and let $\text{Gr}(\Sp) = \Fun((\zz^{op})^{\text{disc}},\Sp)$ denote the symmetric monoidal \category of graded spectra equipped with the Day convolution product

\begin{prop}\emph{\cite[3.2.1]{lurie-k-rotation}}
	There exists a functor, $\gr :  \Fil(\Sp) \rightarrow \emph{Gr}(\Sp)$, given on vertices by $X_{\bullet} \mapsto (X_{i}/X_{i-1})_{ i \in \zz}$, which is symmetric monoidal for the Day convolution and colimit preserving.	
\end{prop}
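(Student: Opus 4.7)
The plan is to construct $\gr$ via the universal property of Day convolution and then identify it, on underlying functors, with the stated pointwise formula. The key structural input is that $\Fil(\Sp)$ with Day convolution is the free presentable stable symmetric monoidal $\infty$-category on $\zz^{op}$ (equipped with the symmetric monoidal structure given by addition), and analogously $\text{Gr}(\Sp)$ is the free such $\infty$-category on $(\zz^{op})^{\text{disc}}$; this is a standard application of \cite[4.8.1.10]{HA}. Concretely, symmetric monoidal colimit-preserving functors $\Fil(\Sp) \to \mathcal{D}$ into any presentable stable symmetric monoidal $\infty$-category $\mathcal{D}$ with bi-cocontinuous tensor product correspond to symmetric monoidal functors $\zz^{op} \to \mathcal{D}$.

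First I would construct a symmetric monoidal functor $\alpha \co \zz^{op} \to \text{Gr}(\Sp)$ sending each integer $n$ to $\sphere(n)$, the graded spectrum with value $\sphere$ concentrated in degree $n$, and sending every non-identity morphism to the essentially unique zero map between graded spectra supported in different degrees. That $\alpha$ is genuinely symmetric monoidal uses the equivalence $\sphere(n) \Day \sphere(m) \simeq \sphere(n+m)$, which is immediate in the discrete setting, together with the identification $\sphere(0) \simeq \unit$; compatibility with compositions of non-identities holds because the Day convolution of zero maps is zero. By the universal property recalled above, $\alpha$ extends uniquely to a symmetric monoidal colimit-preserving functor $F \co \Fil(\Sp) \to \text{Gr}(\Sp)$.

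Next, I would verify that $F$ agrees with the pointwise cofiber formula $X_\bullet \mapsto (X_i / X_{i-1})_{i \in \zz}$. Both functors preserve colimits in the argument: $F$ by construction, and the cofiber formula because evaluations are colimit-preserving in $\Fil(\Sp)$ and $\text{Gr}(\Sp)$ (colimits being computed pointwise) and cofibers are colimit-preserving in the stable $\infty$-category $\Sp$. It therefore suffices to verify agreement on the generators $\{\Sigma^s S(n)\}$ furnished by the preceding lemma. A direct computation from Example \ref{exam:T(n)} shows that for $S(n)$, the transition maps are identities through degree $n$ and zero thereafter, so the only nonvanishing graded piece sits in degree $n$ with value $\sphere$; this matches $F(S(n)) = \sphere(n)$ by construction of $\alpha$.

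The main technical obstacle is invoking the universal property correctly: one must check that the operadic left Kan extension machinery of \cite[\S 4.8]{HA} genuinely produces from the symmetric monoidal functor $\alpha$ a functor which is both symmetric monoidal and colimit-preserving with respect to the Day convolution structure. Once that is in place, the identification with the explicit cofiber formula is then a purely computational matter on the generators $S(n)$, and the symmetric monoidal structure on the resulting functor $\gr$ is inherited from the symmetric monoidal structure on $F$ produced by the universal property.
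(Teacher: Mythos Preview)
The paper does not prove this proposition; it is quoted as \cite[3.2.1]{lurie-k-rotation} and used as a black box. There is therefore no argument in the paper to compare yours against.

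Your strategy via the universal property of Day convolution is sound and is a standard route to this result. Two points deserve more care. First, constructing the symmetric monoidal functor $\alpha \co \zz^{op} \to \text{Gr}(\Sp)$ as an honest functor of symmetric monoidal \categories requires more than the ad hoc remark that ``Day convolution of zero maps is zero''; the clean justification is that $\Map_{\text{Gr}(\Sp)}(\sphere(n), \sphere(m))$ is contractible for $n \neq m$, so any extension of the symmetric monoidal Yoneda embedding $(\zz^{op})^{\text{disc}} \hookrightarrow \text{Gr}(\Sp)$ to all of $\zz^{op}$ exists and is essentially unique, with every coherence datum forced. Equivalently, one may use that $\zz^{op}$ as a symmetric monoidal \category is freely generated by a $\Day$-invertible object equipped with a map from the unit, so specifying $\alpha$ amounts to choosing the (unique) zero morphism $\sphere(0) \to \sphere(1)$. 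Second, to identify $F$ with the explicit cofiber formula you must match them as functors, not merely on objects: both are colimit-preserving and hence determined by their restriction along the spectral Yoneda embedding to the $S(n)$'s, but you should note that the morphisms $S(n) \to S(m)$ are also sent to the same place---which again follows because the relevant mapping spaces in $\text{Gr}(\Sp)$ are contractible. With these clarifications your argument goes through.
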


\noindent As $\text{Gr}(\Sp) \simeq \prod_{i \in \zz} \Sp$, there is a colimit-preserving symmetric monoidal functor $\bigoplus : \text{Gr}(\Sp) \rightarrow \Sp$ given on vertices by $(X_{i})_{i \in \zz} \mapsto \bigoplus_{i \in \zz} X_{i}$. Now define $\gr_{\ast}^{\Fil} : \Fil(\Sp) \rightarrow \Sp$ as $\bigoplus \circ \gr$, and let $\gr_{\ast} : \tow(\Sp) \rightarrow \Sp$ denote the composition of $\gr_{\ast}^{\Fil}$ with the symmetric monoidal functor, $ \tow(\Sp) \rightarrow \Fil(\Sp)$, from Proposition \ref{prop:FilTow}.

\begin{prop}\label{prop:grmonoidal}
	The associated graded functor $\gr_\ast : \tow(\Sp) \rightarrow \Sp$, is colimit preserving and symmetric monoidal. 
\end{prop}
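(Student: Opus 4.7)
The plan is to present $\gr_\ast$ as a composite of three functors, each of which is already known (or essentially already known) to be both symmetric monoidal and colimit preserving, and then observe that these properties are inherited by the composite. Explicitly, we can write
\[
\gr_\ast \;\simeq\; \bigoplus \,\circ\, \gr \,\circ\, \iota,
\]
where $\iota: \tow(\Sp) \to \Fil(\Sp)$ is the symmetric monoidal functor coming from Proposition \ref{prop:FilTow}, $\gr : \Fil(\Sp) \to \text{Gr}(\Sp)$ is Lurie's associated graded, and $\bigoplus : \text{Gr}(\Sp) \to \Sp$ sums the graded pieces. The symmetric monoidality of each factor has already been cited in the excerpt, so that half of the statement is automatic from the fact that compositions of symmetric monoidal functors are symmetric monoidal.

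The remaining task is to verify colimit preservation. For $\gr$ this is the content of \cite[3.2.1]{lurie-k-rotation}, and for $\bigoplus$ it is immediate from the identification $\text{Gr}(\Sp) \simeq \prod_{i \in \zz} \Sp$ (colimits are pointwise, and coproducts in $\Sp$ are computed stage-wise). The step requiring actual work is showing that $\iota$ preserves colimits. The plan is to recall from Proposition \ref{prop:FilTow} that $\iota$ factors as the inverse of the symmetric monoidal equivalence $\theta : \msc{E} \xrightarrow{\simeq} \tow(\Sp)$ followed by the inclusion $\msc{E} \hookrightarrow \Fil(\Sp)$. The first functor preserves colimits since it is an equivalence; for the second, I would argue that $\msc{E}$ is closed under small colimits in $\Fil(\Sp)$: colimits in $\Fil(\Sp)$ are computed pointwise, so for a diagram $\{X^{(\alpha)}\}$ in $\msc{E}$ and any $n \leq 0$, the map $(\colim_\alpha X^{(\alpha)})_n \to (\colim_\alpha X^{(\alpha)})_{n-1}$ is the colimit of equivalences $X^{(\alpha)}_n \to X^{(\alpha)}_{n-1}$, hence an equivalence.

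Assembling these observations, the composite $\gr_\ast = \bigoplus \circ \gr \circ \iota$ is symmetric monoidal and preserves small colimits, proving the proposition. The only minor subtlety is the last colimit-preservation check for $\iota$, but this is little more than the remark that equivalences are stable under colimits in any $\infty$-category; no serious obstacle arises.
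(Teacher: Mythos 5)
Your proof is correct and follows essentially the same route as the paper: $\gr_\ast$ is by definition the composite $\bigoplus \circ \gr \circ \iota$, and both the symmetric monoidality and colimit-preservation are inherited factor by factor. The only (immaterial) difference is in justifying that $\iota$ preserves colimits: you verify directly that $\msc{E}$ is closed under pointwise colimits in $\Fil(\Sp)$, whereas the paper observes that $\iota$ is the left Kan extension along $\zz^{op}_{\geq 0} \subseteq \zz^{op}$ and hence a left adjoint; both arguments are valid.
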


\begin{proof}
	As $\tow(\Sp) \rightarrow \Fil(\Sp)$ is symmetric monoidal, it is immediate that $\gr_{\ast}$ is symmetric monoidal.  The fact that $\gr_{\ast}$ is colimit-preserving follows from the fact that $\gr^{\Fil}_{\ast}$ is colimit-preserving and that the functor $\tow(\Sp) \rightarrow \Fil(\Sp)$ is given by left Kan extension. 
\end{proof}

\begin{cor}\label{cor:Alggrmonoidal}
	$\gr_{\ast}$ induces functors $\gr_{\ast} : \Alg^{\tow} \rightarrow \Alg$ and $\gr_{\ast} : \CAlgTow \rightarrow \CAlg$ which preserve sifted colimits. 	
\end{cor}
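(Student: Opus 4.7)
The plan is to derive the corollary formally from Proposition~\ref{prop:grmonoidal}, which establishes that $\gr_\ast \co \tow(\Sp) \to \Sp$ is a colimit-preserving symmetric monoidal functor.

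First, since a symmetric monoidal functor between symmetric monoidal \categories induces functors on the associated \categories of $\mathbb{E}_1$- and $\mathbb{E}_\infty$-algebras (cf.\ \cite[2.1.3.2, 3.2.1.2]{HA}), we immediately obtain the desired induced functors $\gr_\ast \co \Alg^{\tow} \to \Alg$ and $\gr_\ast \co \CAlgTow \to \CAlg$. By construction, each fits into a commutative square with the forgetful functor $U$ to the underlying \category of spectra (respectively, $\nn$-filtered spectra) on either side.

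To verify preservation of sifted colimits, I would appeal to the standard fact \cite[3.2.3.1, 3.2.3.2]{HA}: for any presentably symmetric monoidal \category $\mathcal{C}^\otimes$, both $U \co \Alg(\mathcal{C}) \to \mathcal{C}$ and $U \co \CAlg(\mathcal{C}) \to \mathcal{C}$ are conservative and preserve sifted colimits. Applying this to $\mathcal{C} = \Sp$ and to $\mathcal{C} = \tow(\Sp)$ (both being presentably symmetric monoidal under Day convolution, as recorded at the beginning of Section~2), it suffices to check that $\gr_\ast \co \tow(\Sp) \to \Sp$ sends sifted colimit diagrams to sifted colimit diagrams. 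But this is immediate from Proposition~\ref{prop:grmonoidal}, which says that $\gr_\ast$ preserves \emph{all} small colimits. Concretely, for any sifted diagram $D$ in $\Alg^{\tow}$ (or $\CAlgTow$), the comparison map $\colim \gr_\ast D \to \gr_\ast \colim D$ becomes an equivalence after applying $U$, by chasing the commutative square and using that $U$ computes sifted colimits on both sides; conservativity of $U$ then upgrades this to an equivalence in $\Alg$ (or $\CAlg$).

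The argument is essentially formal, and I anticipate no substantial obstacle. The main point to verify is simply that the algebraic \categories in question are defined via the standard \categorical operadic machinery of \cite{HA}, so that the cited results on forgetful functors apply verbatim; this is already built into the setup of Section~2.
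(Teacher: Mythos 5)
Your argument is correct and is essentially the paper's own proof, just written out in more detail: both obtain the induced functors on algebras from symmetric monoidality of $\gr_\ast$ and then deduce preservation of sifted colimits from the fact that the forgetful functors create sifted colimits (\cite[3.2.3.1]{HA}) together with $\gr_\ast$ being colimit-preserving on underlying objects. No gaps.
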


\begin{proof}
	As $\gr_{\ast}$ is symmetric monoidal, it induces maps functors on commutative and associative algebras by definition (\cite[2.1.3.1]{HA}). Additionally, \cite[3.2.3.1]{HA} guarantees that $\gr_{\ast}$ preserves sifted colimits of associative and commutative algebras. 
\end{proof}

\subsection{The Whitehead tower}

Recall, that for all $n \in \zz$, there are functors $\tau_{\geq n} : \Sp \rightarrow \Sp_{\geq n}$, which are right adjoint to the inclusions $i_{n} : \Sp_{\geq n} \subseteq \Sp$ \cite[1.2.1.7]{HA}. The composites $i_{n} \circ \tau_{\geq n} : \Sp \rightarrow \Sp$ are colocalization functors, and the inclusions $\cdots \subseteq \Sp_{\geq n} \subseteq \Sp_{\geq n-1} \subseteq \cdots \subseteq \Sp_{\geq 0} \subseteq \Sp$ induce a diagram, $\zz^{op}_{\geq 0} \rightarrow \Fun(\Sp,\Sp)$, given by
$$\cdots i_{n} \circ \tau_{\geq n} \rightarrow i_{n-1} \circ \tau_{\geq n-1} \rightarrow \cdots \rightarrow i_{0} \circ \tau_{\geq 0}.$$

\noindent Using this diagram, we can construct a functor $T : \tow(\Sp) \rightarrow \tow(\Sp)$ which is given on vertices by $\{X_{n}\}_{n\geq 0} \mapsto \{\tau_{\geq n}X_{n}\}_{n\geq 0}$. 

\begin{defn}
	The \emph{Whitehead tower functor} is the composite $W = T \circ \delta : \Sp \rightarrow \tow(\Sp)$, where $\delta$ is the constant tower functor. 
\end{defn}

\begin{prop}
	The Whitehead tower functor $\Sp \rightarrow \tow(\Sp)$ is lax symmetric monoidal. 
\end{prop}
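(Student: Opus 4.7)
The plan is to exhibit $W$ as a composite of two lax symmetric monoidal functors, each of which arises as a right adjoint to a symmetric monoidal functor. This reduces the proof to two separate observations plus the general principle that a right adjoint of a symmetric monoidal functor inherits a canonical lax symmetric monoidal structure.

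First, I would argue that the constant tower functor $\delta \colon \Sp \to \tow(\Sp)$ is right adjoint to $\ev_0$. One way to see this: since $0 \in \zz^{op}_{\geq 0}$ is a terminal object, $\delta$ is precisely the right Kan extension along $\{0\} \hookrightarrow \zz^{op}_{\geq 0}$. Alternatively, one checks directly that a map $Y \to \delta X$ in $\tow(\Sp)$ is uniquely determined by its component at $0$, since the transition maps of $\delta X$ are identities. By Proposition \ref{prop:Daycofinal}, $\ev_0$ is symmetric monoidal, and the right adjoint of a symmetric monoidal functor canonically inherits a lax symmetric monoidal structure (see \cite{HA}). Hence $\delta$ is lax symmetric monoidal.

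Next, I would introduce the full subcategory $\msc{T} \subseteq \tow(\Sp)$ spanned by those towers $\{X_n\}_{n \geq 0}$ with $X_n \in \Sp_{\geq n}$. Two facts about $\msc{T}$ are needed. First, $\msc{T}$ is a symmetric monoidal subcategory: the unit $\unit_{\tow}$ lies in $\msc{T}$, and for $X, Y \in \msc{T}$, Lemma \ref{lem:CalcDay} computes $(X \Day Y)_k$ as a colimit of terms $X_i \otimes Y_j$ with $i + j \geq k$. Each such term is $(i+j)$-connective, hence $k$-connective, and since connectivity is preserved under colimits of spectra, $(X \Day Y)_k \in \Sp_{\geq k}$. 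Second, the inclusion $\iota \colon \msc{T} \to \tow(\Sp)$ admits a right adjoint $T' \colon \tow(\Sp) \to \msc{T}$ given on vertices by $\{X_n\} \mapsto \{\tau_{\geq n} X_n\}$; this assembles pointwise from the adjunctions $i_n \dashv \tau_{\geq n}$ on each level of the tower.

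Because $\iota$ is symmetric monoidal, its right adjoint $T'$ is lax symmetric monoidal by the same principle as before, and post-composing with $\iota$ identifies $T = \iota \circ T'$ as a lax symmetric monoidal endofunctor of $\tow(\Sp)$. Consequently, $W = T \circ \delta$ is a composite of lax symmetric monoidal functors, and is itself lax symmetric monoidal. I expect the only real verification to be the closure of $\msc{T}$ under Day convolution; the remainder of the argument follows formally from the interaction of adjunctions with symmetric monoidal structures.
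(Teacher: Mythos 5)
Your proposal is correct and follows essentially the same route as the paper: the key verification in both is that the subcategory of towers with $X_n \in \Sp_{\geq n}$ contains the unit and is closed under Day convolution (via the colimit formula and additivity of connectivity), after which the lax structure on $T$ comes from its being right adjoint to the symmetric monoidal inclusion. The only cosmetic difference is that you obtain the lax structure on $\delta$ from the adjunction $\ev_0 \dashv \delta$, whereas the paper simply invokes that $\delta$ is symmetric monoidal; either suffices.
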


\begin{proof}
	Let $\msc{E} \subseteq \tow(\Sp)$ denote the full subcategory of $\nn$-filtered spectra, $X_{\bullet}$, with the property that $X_{n} \simeq \tau_{\geq n}X_{n}$. Additionally, note that the essential image of $T$ is exactly $\msc{E}$; this yields a right adjoint $R : \tow(\Sp) \rightarrow \msc{E}$, which is a colocalization. It remains to check the hypotheses of \cite[2.2.1.1]{HA} are satisfied, and by \cite[2.2.1.2]{HA} it suffices to show that $\msc{E}$ contains the unit and is closed with respect to the Day convolution for $\tow(\Sp)$. Certainly $\unit_{\tow} \in \msc{E}$, and if $X_{\bullet}, Y_{\bullet} \in \msc{E}$, 
	$$(X_{\bullet} \Day Y_{\bullet})_{n} = \underset{p+q \geq n}{\varinjlim} X_{p} \otimes Y_{q} \in \Sp_{\geq n}$$
	since $\Sp_{\geq n}$ is stable under colimits in $\Sp$. We conclude that $\msc{E} \subseteq \tow(\Sp)$ is symmetric monoidal and $R : \tow(\Sp) \rightarrow \msc{E}$ is lax monoidal, meaning that $T$ is also lax monoidal. As the constant tower functor $\delta : \Sp \rightarrow \tow(\Sp)$ is symmetric monoidal, we may conclude the result.
\end{proof}

\begin{rmk}
	Since $W$ is lax monoidal, we obtain functors $\Alg \rightarrow \Alg^{\tow}$ and $\CAlg \rightarrow \CAlgTow$. In particular, for $A \in \CAlg_{\geq 0}$, $W$ produces a multiplicative tower
	$$ \cdots \rightarrow \tau_{\geq n}A \rightarrow \tau_{\geq n-1}A \rightarrow \cdots \rightarrow \tau_{\geq 0}A \simeq A,$$
	\noindent where the $k$-th graded term is given by $\Sigma^{k}H\pi_{k}A$. 
\end{rmk}

\section{Descent for generalized Eilenberg-Mac Lane spectra}

In this section, we prove a special case of our main theorem. Before doing so, we establish some facts about \Cech conerves and recall a few definitions. 

\begin{defn}
	Let $C$ be an $\infty$-category which admits finite colimits, and let $f : X \rightarrow Y$ be an arrow in $C$. Then the \emph{augmented \Cech conerve of} $f$, denoted by $C^{\bullet}(f)_{+} : N(\Delta_{+}) \rightarrow C$ is the left Kan extension of $f : \Delta^{1} \rightarrow C$ along the inclusion $N(\Delta^{\leq 0}_{+}) \subseteq N(\Delta_{+})$. The \emph{\Cech conerve}, denoted by $C^{\bullet}(f)$ is the restriction of $C^{\bullet}(f)_{+}$ to $N(\Delta)$. In what follows, we may abuse notation and conflate $C^{\bullet}(f)_{+}$ and $C^{\bullet}(f)$. 
\end{defn}

\begin{exam}
	Let $f: A \rightarrow B$ be a morphism in $\CAlg$. As the coproduct in $\CAlg$ is given by the relative smash product, the augmented \Cech conerve is simply the augmented cobar complex of $f$:
	\begin{center}
	\begin{tikzcd}
	A \arrow[r] & B \arrow[r,shift right=1] \arrow[r,shift left=1] & B \otimes_{A} B \arrow[r] \arrow[r,shift right=2] \arrow[r,shift left=2] & \cdots 	
	\end{tikzcd}
	\end{center}
	We will be primarily interested in this cobar complex when $f$ is faithfully flat.
\end{exam}

\begin{defn}
	Let $f : A \rightarrow B$ be a morphism in $\CAlg$. We say that $f$ is \emph{faithfully flat}, provided the following conditions hold:
	\begin{enumerate}
	\item The map $\pi_{0}f : \pi_{0}A \rightarrow \pi_{0}B$ is a faithfully flat map of commutative rings; and 
	\item the induced map $(\pi_{0}B)\otimes_{\pi_{0}A} (\pi_{\ast}A) \rightarrow \pi_{\ast}B$ is an isomorphism of graded rings. 		
	\end{enumerate}	
\end{defn}

As in the discrete case, faithfully flat morphisms determine a Grothendieck topology on both $\CAlg$ and $\CAlg_{\geq 0}$, see \cite[B.6.1.3, B.6.1.7]{SAG}.

\begin{lem}\label{cechequivalence}
	Let $C$ be an $\infty$-category which admits finite colimits and let $F: \Delta^{1} \times \Delta^{1} \rightarrow C$ be a commutative square, such that $\alpha = F|_{\{0\} \times \Delta^{1}}$ and $\beta = F|_{\{1\} \times \Delta^{1}}$ are equivalences in $C$, which we depict as
	\begin{center}
	\begin{tikzcd}
	X \arrow[r,"f"] \arrow[d,"\alpha"'] & Y \arrow[d,"\beta"] \\
	X' \arrow[r,"g"'] & Y'
	\end{tikzcd}
	\end{center}
	Then, the induced map $C^{\bullet}(f)_{+} \rightarrow C^{\bullet}(g)_{+}$ is an equivalence of augmented cosimplicial objects.
\end{lem}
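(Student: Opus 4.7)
The plan is to package the \Cech conerve construction as a functor and appeal to functoriality. First, I would reinterpret the square $F : \Delta^1 \times \Delta^1 \to C$ as a natural transformation $\Phi : f \to g$ in the functor $\infty$-category $\Fun(\Delta^1, C)$, whose value at $0 \in \Delta^1$ is $\alpha$ and whose value at $1 \in \Delta^1$ is $\beta$. Since equivalences in a functor $\infty$-category are detected pointwise, and $\alpha$ and $\beta$ are equivalences by hypothesis, $\Phi$ is itself an equivalence in $\Fun(\Delta^1, C)$.

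Next, using the identification $N(\Delta^{\leq 0}_+) \simeq \Delta^1$, I would rewrite $\Fun(\Delta^1, C) \simeq \Fun(N(\Delta^{\leq 0}_+), C)$. By definition, the augmented \Cech conerve construction is the pointwise left Kan extension along the inclusion $p : N(\Delta^{\leq 0}_+) \hookrightarrow N(\Delta_+)$. Because $C$ admits finite colimits and each undercategory $(\Delta^{\leq 0}_+)_{/[n]}$ is finite, these pointwise extensions exist and assemble into a functor of $\infty$-categories
\[
p_{!} : \Fun(N(\Delta^{\leq 0}_+), C) \to \Fun(N(\Delta_+), C).
\]
Being a functor between $\infty$-categories, $p_{!}$ preserves equivalences, so applying it to $\Phi$ yields the desired equivalence $C^{\bullet}(f)_{+} \to C^{\bullet}(g)_{+}$ in $\Fun(N(\Delta_+), C)$, i.e.\ an equivalence of augmented cosimplicial objects.

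If one wants to avoid invoking functoriality of the Kan extension abstractly, the same conclusion can be reached levelwise: at each $[n] \in \Delta_+$, the object $C^{\bullet}(f)_{+}([n])$ is computed as a finite colimit whose diagram involves only copies of $X$ and $Y$ (connected by $f$), and the induced map to $C^{\bullet}(g)_{+}([n])$ comes from applying this colimit to a morphism of diagrams whose vertex-wise components are $\alpha$ and $\beta$. Since colimits of equivalences are equivalences, the map of colimits is an equivalence for every $[n]$, which by pointwise detection of equivalences in $\Fun(N(\Delta_+), C)$ gives the result. I do not anticipate a substantial obstacle; the crux is simply the observation that the \Cech conerve is by construction functorial in the morphism $f$, and that equivalences in functor $\infty$-categories are detected objectwise.
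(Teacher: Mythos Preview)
Your proposal is correct and is essentially the same argument as the paper's. The paper phrases it as taking the left Kan extension of $F$ along $\Delta_{+}^{\leq 0} \times \Delta^{1} \subseteq \Delta_{+} \times \Delta^{1}$ and then checking the resulting map of augmented cosimplicial objects is a levelwise equivalence; your primary formulation via functoriality of $p_{!}$ is a clean repackaging of exactly this, and your alternative levelwise argument is precisely what the paper writes down.
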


\begin{proof}
	Since $C$ admits finite colimits we can take the left Kan extension of $F$ along the inclusion $\Delta_{+}^{\leq 0} \times \Delta^{1} \subseteq \Delta_{+} \times \Delta^{1}$, yielding a morphism of augmented cosimplicial objects $\widetilde{F} : \Delta_{+} \times \Delta^{1} \rightarrow C$. It suffices to check that $\widetilde{F}|_{\{n\}\times \Delta^{1}}$ is an equivalence for all $n \in \Delta_{+}$. Since $\widetilde{F}|_{\{n\}\times \Delta^{1}}$ is given by the map $Y\coprod_{X} \cdots \coprod_{X} Y \rightarrow Y'\coprod_{X'} \cdots \coprod_{X'} Y'$ induced by $\alpha$ and $\beta$, it is an equivalence.  	
\end{proof}

\begin{lem}\label{cechpushout}
	Let $C$ be an $\infty$-category which admits finite colimits and let $D : \Delta^{1} \times \Delta^{1} \rightarrow C$ be a coCartesian square, depicted as
	\begin{center}
	\begin{tikzcd}
	X \arrow[r,"f"] \arrow[d,"\phi"'] & Y \arrow[d] \\
	X' \arrow[r,"f'"'] & Y'
	\end{tikzcd}
	\end{center}
	\noindent Then, the natural map $C^{\bullet}(f) \coprod_{X} X' \rightarrow C^{\bullet}(f')$ is an equivalence of cosimplicial objects of $C$. 	
\end{lem}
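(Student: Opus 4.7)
The plan is to verify the claimed equivalence degreewise using the concrete description of the Čech conerve as an iterated pushout together with the associativity of pushouts.

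First I would construct the natural map. The coCartesian square gives, in particular, a morphism of arrows $f \to f'$ in $\Fun(\Delta^1, C)$, and functoriality of left Kan extension produces an induced map of augmented cosimplicial objects $C^{\bullet}(f)_+ \to C^{\bullet}(f')_+$. Restricting to $\Delta$, this map and the augmentation $\phi \co X \to X'$ fit into a commutative square
\begin{equation*}
\begin{tikzcd}
X \arrow[r] \arrow[d,"\phi"'] & C^{\bullet}(f) \arrow[d] \\
X' \arrow[r] & C^{\bullet}(f')
\end{tikzcd}
\end{equation*}
of (cosimplicial) objects, and the universal property of the pushout yields the map $C^{\bullet}(f) \coprod_X X' \to C^{\bullet}(f')$ whose equivalence we want to establish.

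Next I would check the claim degreewise. Since finite colimits in a functor category are computed pointwise, the map above is an equivalence if and only if, for every $n \geq 0$, the induced map
\begin{equation*}
Y^{\coprod_X (n+1)} \coprod_X X' \longrightarrow (Y')^{\coprod_{X'} (n+1)}
\end{equation*}
is an equivalence in $C$. I would prove this by induction on $n$. The base case $n = 0$ is precisely the hypothesis that $Y' \simeq Y \coprod_X X'$. For the inductive step, decompose $(Y')^{\coprod_{X'} (n+1)} \simeq (Y')^{\coprod_{X'} n} \coprod_{X'} Y'$, apply the inductive hypothesis to the left factor, rewrite the right factor $Y'$ once again as $Y \coprod_X X'$, and then use associativity of pushouts to rearrange
\begin{equation*}
\bigl(Y^{\coprod_X n} \coprod_X X'\bigr) \coprod_{X'} \bigl(Y \coprod_X X'\bigr) \simeq Y^{\coprod_X n} \coprod_X Y \coprod_X X' = Y^{\coprod_X (n+1)} \coprod_X X'.
\end{equation*}

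The only delicate step is packaging the inductive construction coherently as an equivalence of cosimplicial objects rather than merely a levelwise equivalence; this is where the description of $C^{\bullet}(f)$ as a left Kan extension does the real work. Using that functoriality to build the comparison map once and for all, and only then arguing levelwise, sidesteps having to track coface and codegeneracy maps by hand. I do not expect any other genuine obstacle, since associativity of pushouts makes the inductive bookkeeping essentially formal.
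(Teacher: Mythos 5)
Your proof is correct, but it takes a genuinely different route from the paper's. The paper never computes the conerve degreewise: it constructs the pushout $(-) \coprod_{X} X'$ as an actual functor $\Fun(\Delta^{1},C)\times_{C}\{X\} \rightarrow \Fun(\Delta^{1},C)\times_{C}\{X'\}$ (via a section of an acyclic fibration of pushout diagrams), observes that this functor preserves colimits, and then simply commutes it past the left Kan extension defining $C^{\bullet}(-)_{+}$; since the Kan extension is computed pointwise by colimits and the pushout functor preserves them, the comparison map is an equivalence all at once, with no induction and no explicit formula for $C^{n}(f)$. You instead build the comparison map by functoriality of Kan extension plus the universal property of the pushout, and then verify it levelwise using the identification $C^{n}(f) \simeq Y^{\coprod_{X}(n+1)}$ and pasting of pushout squares. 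This is sound --- equivalences of cosimplicial objects are detected levelwise, and the pasting lemma says precisely that the \emph{canonical} comparison map out of an iterated pushout is an equivalence, so the equivalences your induction produces really are the components of your natural map (a point worth making explicit, since an arbitrary levelwise equivalence would not suffice). The trade-off: your argument needs the degreewise wide-pushout description of the \Cech conerve (the dual of the usual formula for \Cech nerves, which the paper also invokes in the proof of the preceding lemma) and a small amount of bookkeeping to match the inductive equivalences with the canonical map, whereas the paper's argument buys coherence for free at the cost of the slightly heavier functorial setup for $(-)\coprod_{X}X'$.
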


\begin{proof}	
	First, we construct a pushout functor. Let $\msc{E}$ be the full subcategory of $\Fun(\Delta^{1} \times \Delta^{1},C)$ consisting of pushout diagrams. The restriction functor $\msc{E} \rightarrow \Fun(\Lambda^{2}_{0},C)$ is an acyclic fibration by \cite[4.3.2.15]{HTT}. However, since  $\Lambda_{0}^{2} \cong (\partial\Delta^{1})^{\triangleleft} \cong \Delta^{1} \coprod_{\Delta^{\{0\}}} \Delta^{1}$ we know that $\Fun(\Lambda^{2}_{0},C) \cong \Fun(\Delta^{1},C) \times_{{C}} \Fun(\Delta^{1},C)$. Pulling back along the inclusion of the vertex $\{\phi: X \rightarrow X'\}$, we obtain an acyclic fibration
	\[ q: \msc{E} \times_{\Fun(\Delta^{1},C)} \{\phi\} \rightarrow \Fun(\Delta^{1},C)\times_{C} \{X\}. \]
	
	\noindent By choosing a section of $q$ and restricting to the opposite edge, we obtain a functor	
	\[ (-) \coprod_{X} X' : \Fun(\Delta^{1},C)\times_{C} \{X\} \rightarrow \msc{E} \times_{\Fun(\Delta^{1},C)} \{\phi\} \rightarrow \Fun(\Delta^{1},C) \times_{C} \{X'\} \]
	
	\noindent which takes $X \rightarrow Y$ to $X' \rightarrow Y \coprod_{X}X'$ and preserves colimits on account that the section of $q$ is a left adjoint and colimits in functor categories are computed pointwise. By taking left Kan extensions, we obtain the following commutative diagram:
	
	\begin{center}
	\begin{tikzcd}
	\Delta^{\leq 0}_{+} \arrow[d] \arrow[rr,bend left,"f'"] \arrow[r,"f"] & \Fun(\Delta^{1},C)\times_{C} \{X\} \arrow[r] & \Fun(\Delta^{1},C)\times_{C} \{X'\} \\
	\Delta_{+} \arrow[ur,"C^{\bullet}(f)_{+}"'] \arrow[urr,bend right=10,"C^{\bullet}(f')_{+}"'] 
	\end{tikzcd}
	\end{center}
	
	\noindent By the universal property, we obtain a natural map $C^{\bullet}(f)_{+} \coprod_{X}X' \rightarrow C^{\bullet}(f')_{+}$. This is an equivalence, since left Kan extensions are calculated pointwise as colimits and $(-)\coprod_{X} X'$ preserves them. Finally, by projecting down to $C$, we obtain the desired equivalence of augmented cosimplicial objects of $C$.
\end{proof}

We now record two key results used in our proof of Proposition \ref{prop:SpecialCase}.

\begin{thm}\emph{\cite[Corollary 3.4, Remark 3.5]{BMS2}} \label{thm:BMS2}
	The functors $\THH(-)$ and $\THH(-)_{hC_{p}}$ are $\Sp$-valued sheaves for the fpqc topology on the category of commutative rings.	
\end{thm}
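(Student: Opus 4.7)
The plan is to follow the three-step strategy of Bhatt--Morrow--Scholze, successively reducing the descent problem for $\THH$ to descent for Hochschild homology, then to descent for wedge powers of the cotangent complex, and finally to Bhatt's descent theorem for the cotangent complex itself. Since we now work with ordinary commutative rings rather than ring spectra, we have access to tools unavailable in the general setting of this paper.

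First, I would reduce to classical Hochschild homology $\HH(-/\zz)$. For $R$ a discrete commutative ring, one filters $\THH(R) = R \otimes_{\sphere} R \otimes_{\sphere}\cdots$ (in the appropriate cyclic sense) by applying the Postnikov tower of the sphere spectrum $\sphere$. The resulting filtration on $\THH(R)$ has associated graded pieces given by $\HH$ of $R$ with coefficients in Eilenberg--Mac Lane spectra $H\pi_{k}\sphere$. Because fpqc sheaves are closed under limits, and each such Postnikov stage is bounded, it suffices to prove fpqc descent for $\HH(-/\zz)$ with values in ordinary modules.

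Next, I would apply the HKR filtration on $\HH(R/\zz)$, which is complete and has associated graded equivalent to $\bigoplus_n \Lambda^n L_{R/\zz}[n]$, where $L_{R/\zz}$ is the cotangent complex. Since the fpqc sheaf condition is stable under shifts, direct sums, and limits along complete filtrations, fpqc descent for $\HH(-/\zz)$ reduces to fpqc descent for each exterior power $\Lambda^n L_{-/\zz}$, which in turn reduces to the case of the cotangent complex itself. The cotangent complex case is then handled by Bhatt's theorem on faithfully flat descent for the cotangent complex (proved as \cite[3.1]{BMS2}), whose proof leverages the Koszul resolution together with classical faithfully flat descent for modules.

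For the homotopy orbit version $\THH(-)_{hC_p}$, the argument is essentially the same: the $C_p$-action is induced from the canonical $\mathbb{T}$-action on $\THH$, and the homotopy orbit construction is a colimit that interacts well with the filtrations above. Concretely, one can run the same filtration arguments $C_p$-equivariantly, or apply $(-)_{hC_p}$ at the level of associated graded pieces and use that the relevant completeness/convergence is preserved. The main obstacle of the argument is ensuring that the Postnikov and HKR filtrations are complete enough for descent to propagate from associated graded to the total object; this is where working over discrete rings is essential, as it guarantees both the relevant boundedness and the availability of the HKR identification, neither of which survives in the setting of connective commutative ring spectra.
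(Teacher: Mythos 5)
This theorem is not proved in the paper at all --- it is imported verbatim from Bhatt--Morrow--Scholze \cite[Corollary 3.4, Remark 3.5]{BMS2}, and your three-step reduction (Postnikov tower of $\sphere$ to pass from $\THH$ to $\HH(-/\zz)$, HKR filtration to pass to wedge powers of the cotangent complex, then Bhatt's flat descent theorem \cite[3.1]{BMS2}) is exactly the argument of that source, which the introduction of this paper also summarizes. So your proposal is correct and follows the same route as the cited proof; the only place you are thinner than \cite{BMS2} is the $\THH(-)_{hC_{p}}$ case, where one still needs the connectivity/convergence argument making the finite-group homotopy orbit colimit commute with the totalization of the cobar complex.
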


\begin{thm}\emph{\cite[Theorem 1.2]{dundas-rognes}} \label{thm:dundas-rognes}
	Let $A \rightarrow B$ be a 1-connected morphism in $\CAlg_{\geq 0}$, and let $F = \THH$ or $\THH(-)_{hC_{p}}$. Then, the natural map
	\begin{center}
	\begin{tikzcd}
	F(A) \arrow[r] & \varprojlim \bigg( F(B) \arrow[r,shift right=1] \arrow[r,shift left=1] & F(B \otimes_{A} B) \arrow[r] \arrow[r,shift right=2] \arrow[r,shift left=2] & \cdots 	\bigg)
	\end{tikzcd}
	\end{center}
	\noindent is an equivalence in $\Sp$. 	
\end{thm}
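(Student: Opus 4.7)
The plan is to establish the equivalence as a convergence statement for the cosimplicial descent spectral sequence, exploiting $1$-connectedness to force uniform connectivity estimates. First, I would rewrite the totalization as $\varprojlim_n \mathrm{Tot}_n$ of the partial totalizations and, via the Milnor exact sequence, reduce the problem to showing that the fiber of $F(A) \to \mathrm{Tot}_n F(C^\bullet(A \to B))$ becomes arbitrarily connective as $n \to \infty$; this forces both the inverse limit and the $\varprojlim^1$ term to vanish.

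The heart of the argument is a connectivity lemma for the cobar complex itself: if $A \to B$ is $1$-connective in $\CAlg_{\geq 0}$, then the map $A \to \mathrm{Tot}_n C^\bullet(A \to B)$ is at least $(n+2)$-connective. I would prove this by induction on $n$ via the fiber sequence $\mathrm{Tot}_n \to \mathrm{Tot}_{n-1}$ whose fiber is a loop space of the $n$-th normalized term of the cosimplicial object. For the cobar complex on $A \to B$, this normalized term is essentially an $n$-fold iterated relative smash (over $A$) of the cofiber $B/A$, which is $2$-connective; an inductive connectivity count shows that these normalized terms grow linearly in $n$, which yields the claim.

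With this estimate in hand, I would transfer it through $F$. For $F = \THH$, I would use the bar formula $\THH(R) \simeq \lvert R^{\otimes[\bullet]+1}\rvert$ together with the stability of connective spectra under smash products and geometric realizations to show that $\THH$ preserves $n$-connectivity of maps between connective $\mathbb{E}_\infty$-rings; combining this with the connectivity lemma yields the $\THH$ case. For $F = \THH(-)_{hC_p}$, the functor $(-)_{hC_p}$ is exact and preserves connectivity (as a colimit of connective spectra), so the same estimates propagate; the subtlety is that $(-)_{hC_p}$ does not commute with $\mathrm{Tot}_n$ strictly, but the uniformity of the connectivity bounds controls both $\varprojlim$ and $\varprojlim^1$ in the limit $n \to \infty$.

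The main obstacle will be the precise connectivity lemma for the \v{C}ech conerve. Getting the correct linear growth rate requires careful bookkeeping of the matching objects and iterated smash powers of the relative cofiber, and avoiding pitfalls where the connectivity of a cosimplicial fiber fails to match the connectivity of its normalization. A secondary difficulty in the orbits case is to confirm vanishing of the Milnor $\varprojlim^1$ term, which again reduces to verifying that the connectivity bound is uniform in $n$; without uniformity, the homotopy orbit version could fail even when the fixed-point or straight $\THH$ version succeeds.
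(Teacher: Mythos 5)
First, note that the paper does not prove this statement at all: it is imported verbatim as \cite[Theorem 1.2]{dundas-rognes}, so there is no internal proof to compare against. What can be assessed is whether your sketch would actually constitute a proof, and as written it has a genuine gap at its central step.

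The connectivity lemma you propose for the cobar complex itself is essentially correct (up to an indexing convention): with $I = \mathrm{fib}(A \to B)$, the fiber of $A \to \mathrm{Tot}_n C^{\bullet}(A \to B)$ is an iterated relative smash power of $I$, whose connectivity grows linearly in $n$. The problem is the sentence ``combining this with the connectivity lemma yields the $\THH$ case.'' Your lemma controls the map $A \to \mathrm{Tot}_n C^{\bullet}(A\to B)$, and the fact that $\THH$ preserves connectivity of maps of connective rings then controls $\THH(A) \to \THH(\mathrm{Tot}_n C^{\bullet}(A\to B))$. But the theorem is about $\THH(A) \to \mathrm{Tot}_n\, \THH(C^{\bullet}(A \to B))$, and $\THH$ does not commute with the finite limits $\mathrm{Tot}_n$ --- it is built from smash powers and a geometric realization and is very far from left exact. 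No connectivity estimate on the source-level totalization transfers through $F$ in the way you describe; indeed, the assertion that the comparison map $\THH(\mathrm{Tot}_n C^\bullet) \to \mathrm{Tot}_n \THH(C^\bullet)$ is highly connective is essentially equivalent to the theorem you are trying to prove. The same objection applies, a fortiori, to $\THH(-)_{hC_p}$.

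The missing idea, which is the actual content of Dundas--Rognes, is multirelative: one replaces the $n$-truncated cobar complex by the associated strongly cocartesian $(n+1)$-cube $S \mapsto B^{\otimes_A S}$ (with $\emptyset \mapsto A$), whose total fiber computes $\mathrm{fib}(A \to \mathrm{Tot}_n)$, and one proves that $\THH$ carries highly cocartesian cubes of connective ring spectra with $1$-connected edges to highly cartesian cubes, with a connectivity gain linear in the dimension of the cube. That statement is a higher (dual) Blakers--Massey theorem for smash powers combined with control of geometric realizations, and it is the hard part of the argument; ``$\THH$ preserves $n$-connectivity of maps'' is the $1$-cube case and does not imply it. If you want to pursue your outline, the connectivity lemma should be reformulated as an estimate on how cartesian the cube $\THH(B^{\otimes_A \bullet})$ is, not as an estimate on $A \to \mathrm{Tot}_n C^{\bullet}$ followed by an application of $\THH$.
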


\begin{prop}\label{prop:SpecialCase}
	Let $f: A \rightarrow B$ be a faithfully flat map in $\CAlg_{\geq 0}$, which induces a faithfully flat map $H\pi_{\ast}A \rightarrow H\pi_{\ast}B$. The natural map 	
	\begin{center}
	\begin{tikzcd}
	\THH(H\pi_{\ast}A) \arrow[r] & \varprojlim \bigg(\THH(H\pi_{\ast}B) \arrow[r,shift right = 1] \arrow[r,shift left = 1] & \THH(H\pi_{\ast}B \otimes_{H\pi_{\ast}A} H\pi_{\ast}B) \arrow[r] \arrow[r,shift right=2] \arrow[r,shift left=2] & \cdots \bigg)
	\end{tikzcd}
	\end{center}
	\noindent is an equivalence in $\Sp$. 
\end{prop}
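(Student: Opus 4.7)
My plan is to combine the pushout structure induced by faithful flatness with Theorem \ref{thm:BMS2}, and then to handle the key commutation of a tensor product with a totalization by exploiting the direct-sum decomposition of $H\pi_\ast A$.

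First I would establish that the square
\[
\begin{tikzcd}
H\pi_0 A \arrow[r] \arrow[d] & H\pi_0 B \arrow[d] \\
H\pi_\ast A \arrow[r] & H\pi_\ast B
\end{tikzcd}
\]
is a pushout in $\CAlg_{\geq 0}$. Faithful flatness yields $\pi_\ast B \cong \pi_0 B \otimes_{\pi_0 A} \pi_\ast A$ as graded rings, and since $\pi_0 B$ is flat over $\pi_0 A$ no higher $\Tor$ appears, so the derived tensor product $H\pi_0 B \otimes_{H\pi_0 A} H\pi_\ast A$ recovers $H\pi_\ast B$ as an $E_\infty$-ring. Lemma \ref{cechpushout} applied to this pushout then gives an equivalence of cosimplicial $E_\infty$-rings
\[
H\pi_\ast B^{\otimes_{H\pi_\ast A} n+1} \;\simeq\; H\pi_\ast A \otimes_{H\pi_0 A} H\pi_0 B^{\otimes_{H\pi_0 A} n+1}.
\]
Since $\THH(-) = -\otimes S^1$ preserves colimits of $E_\infty$-rings, one has $\THH(X) \simeq X \otimes_{H\pi_0 A} \THH(H\pi_0 A)$ for any $H\pi_0 A$-algebra $X$, and hence
\[
\THH(H\pi_\ast B^{\otimes_{H\pi_\ast A} n+1}) \;\simeq\; H\pi_\ast A \otimes_{H\pi_0 A} \THH(H\pi_0 B^{\otimes_{H\pi_0 A} n+1}).
\]
By Theorem \ref{thm:BMS2} applied to the faithfully flat map of ordinary rings $\pi_0 A \to \pi_0 B$, $\varprojlim_n \THH(H\pi_0 B^{\otimes_{H\pi_0 A} n+1}) \simeq \THH(H\pi_0 A)$, so the desired equivalence would follow once I show that the functor $H\pi_\ast A \otimes_{H\pi_0 A} (-)$ commutes with this totalization.

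The principal obstacle is precisely this commutation. My approach is to use the decomposition $H\pi_\ast A \simeq \bigoplus_{k \geq 0} \Sigma^k H\pi_k A$: since each $\Sigma^k H\pi_k A$ is $k$-connective, only finitely many summands contribute in each homotopy degree, so this direct sum agrees with the product $\prod_k \Sigma^k H\pi_k A$ in $\Sp$. Because products commute with limits, the claim reduces to showing that each functor $H\pi_k A \otimes_{H\pi_0 A} (-)$ preserves the totalization. Writing $N_k = H\pi_k A \otimes_{H\pi_0 A} \THH(H\pi_0 A)$, this amounts to $\varprojlim_n N_k \otimes_{H\pi_0 A} H\pi_0 B^{\otimes_{H\pi_0 A} n+1} \simeq N_k$, which is classical faithfully flat descent for the $H\pi_0 A$-module spectrum $N_k$. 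Stringing these pieces together yields
\[
\varprojlim_n \THH(H\pi_\ast B^{\otimes_{H\pi_\ast A} n+1}) \;\simeq\; H\pi_\ast A \otimes_{H\pi_0 A} \THH(H\pi_0 A) \;\simeq\; \THH(H\pi_\ast A),
\]
which is the desired equivalence.
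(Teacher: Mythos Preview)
The central step of your argument is incorrect. You assert that for any $H\pi_0 A$-algebra $X$ one has
\[
\THH(X)\;\simeq\; X \otimes_{H\pi_0 A} \THH(H\pi_0 A),
\]
but this fails in general. The functor $\THH(-)\simeq S^{1}\otimes(-)$ does preserve pushouts of $E_\infty$-rings, and applying it to your pushout $H\pi_\ast B^{\otimes_{H\pi_\ast A} n+1}\simeq H\pi_\ast A \otimes_{H\pi_0 A} H\pi_0 B^{\otimes_{H\pi_0 A} n+1}$ yields
\[
\THH\bigl(H\pi_\ast B^{\otimes_{H\pi_\ast A} n+1}\bigr)\;\simeq\;\THH(H\pi_\ast A)\;\otimes_{\THH(H\pi_0 A)}\;\THH\bigl(H\pi_0 B^{\otimes_{H\pi_0 A} n+1}\bigr),
\]
with base $\THH(H\pi_0 A)$, not $H\pi_0 A$. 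Your version would force $\THH(H\pi_\ast A/H\pi_0 A)\simeq H\pi_\ast A$, i.e.\ that $H\pi_0 A\to H\pi_\ast A$ is THH-\'etale, which is false whenever $\pi_\ast A$ is not concentrated in degree~$0$ (already for $\pi_\ast A=\pi_0 A[x]$ one picks up an exterior class $dx$).

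With the correct base-change formula, the rest of your strategy no longer goes through: you would need $\THH(H\pi_\ast A)\otimes_{\THH(H\pi_0 A)}(-)$ to commute with the totalization of the cosimplicial $\THH(H\pi_0 A)$-module $\THH(H\pi_0 B^{\otimes_{H\pi_0 A}\bullet+1})$. Your connectivity trick of splitting $H\pi_\ast A$ as $\bigoplus_k \Sigma^k H\pi_k A$ and invoking fpqc descent for $H\pi_0 A$-modules does not apply here, because neither the module $\THH(H\pi_\ast A)$ nor the base $\THH(H\pi_0 A)$ decomposes in that way, and the cosimplicial object is no longer a \Cech conerve for a faithfully flat map of discrete rings. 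The paper circumvents this obstruction entirely: it builds a bi-cosimplicial diagram from the same pushout square and uses Theorem~\ref{thm:dundas-rognes} (descent for $1$-connected maps) along the columns together with Theorem~\ref{thm:BMS2} and a retract argument along the rows, never needing to commute a tensor product past a totalization.
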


\begin{proof}
	For brevity, we will write $A_{\ast}$ (resp. $B_{\ast}$) for $H\pi_{\ast}A$ (resp. $H\pi_{\ast}B$) and $A_{0}$ (resp. $B_{0}$) for $H\pi_{0}A$ (resp. $H\pi_{0}B$). Since $f$ is faithfully flat, we find that the commutative square
	\begin{center}
	\begin{tikzcd}
	A_{\ast} \arrow[r,"f"] \arrow[d,"\phi_{A_{\ast}}"'] & B_{\ast} \arrow[d,"\phi_{B_{\ast}}"] \\
	A_{0} \arrow[r,"g"'] & B_{0}
	\end{tikzcd}
	\end{center}
	\noindent is a pushout in $\CAlg_{\geq 0}$ by a K\"{u}nneth spectral sequence calculation. Viewing this square as a functor $\Delta^{\leq 0}_{+} \times \Delta^{\leq 0}_{+} \rightarrow \CAlg_{\geq 0}$ we can take its left Kan extension along the inclusion $\Delta^{\leq 0}_{+} \times \Delta^{\leq 0}_{+} \subseteq \Delta_{+} \times \Delta_{+}$, which we denote by $X_{+}^{\bullet,\bullet} : \Delta_{+}\times \Delta_{+} \rightarrow \CAlg_{\geq 0}$. By \cite[4.3.2.8]{HTT} and the dual of \cite[6.1.2.11]{HTT}, we can realize this diagram as the degreewise \Cech conerves of either $C^{\bullet}(f)_{+}\rightarrow C^{\bullet}(g)_{+}$ or $C^{\bullet}(\phi_{A_{\ast}})_{+} \rightarrow C^{\bullet}(\phi_{B_{\ast}})_{+}$. In virtue of this, we may identify the $n$-th row in the bisimplical diagram, $X_{+}^{\bullet,n}$, as the augmented \Cech conerve of the map $A_{0} \otimes_{A_{\ast}} \cdots \otimes_{A_{\ast}}A_{0} \rightarrow B_{0} \otimes_{B_{\ast}} \cdots \otimes_{B_{\ast}} B_{0}$, and the $m$-th column, $X_{+}^{m,\bullet}$, as the augmented \Cech conerve of the map $B_{\ast} \otimes_{A_{\ast}} \cdots \otimes_{A_{\ast}} B_{\ast} \rightarrow B_{0} \otimes_{A_{0}} \cdots \otimes_{A_{0}} B_{0}$. Note that the diagram, $\THH \circ \ X_{+}^{\bullet, \bullet}$, induces the following commutative square
	
	\begin{equation}\label{diagram:specialcase}
	\begin{tikzcd}
	\THH(X_{+}^{-1,-1}) \arrow[r] \arrow[d] & \underset{m\in \Delta}{\varprojlim} \THH(X_{+}^{m,-1}) \arrow[d] \\
	\underset{n\in \Delta}{\varprojlim} \THH(X^{-1,n}_{+}) \arrow[r] & \underset{n \in \Delta}{\varprojlim} \underset{m \in \Delta}{\varprojlim}  \THH(X_{+}^{m,n})
	\end{tikzcd}
	\end{equation}
	
	\noindent Since $X_{+}^{\bullet,-1}$ is the augmented \Cech conerve of the map $A_{\ast} \rightarrow B_{\ast}$, the result will follow if we can prove all the other arrows in Diagram \ref{diagram:specialcase} are equivalences. As the maps $X_{+}^{m,-1} \rightarrow X_{+}^{m,0}$ are 1-connected for all $m \geq -1$, Theorem \ref{thm:dundas-rognes} implies that the maps
	$$\THH(X_{+}^{m,-1}) \rightarrow \underset{n\in \Delta}{\varprojlim} \THH(X^{m,n}_{+})$$
	
	\noindent are equivalences for all $m \geq -1$. Thus, the vertical maps in the commutative square are equivalences, by commuting the limits in the bottom right corner. It remains to show that
	$$\THH(X_{+}^{-1,n}) \rightarrow \underset{m \in \Delta}{\varprojlim} \THH(X_{+}^{m,n})$$
	
	\noindent is an equivalence for all $n \geq 0$, which we prove by induction. The case $n = 0$ is precisely Theorem \ref{thm:BMS2}, so we may assume the result holds for $n$. As noted above, the $n$-th row of $X_{+}^{\bullet,\bullet}$ is given by the augmented \Cech conerve of the map $A_{0} \otimes_{A_{\ast}} \cdots \otimes_{A_{\ast}}A_{0} \rightarrow B_{0} \otimes_{B_{\ast}} \cdots \otimes_{B_{\ast}} B_{0}$. However, this augmented cobar construction is equivalent to the augmented \Cech conerve of the map 
	$$id \otimes \cdots \otimes id \otimes f_{0} : A_{0} \otimes_{A_{\ast}} \cdots \otimes_{A_{\ast}}A_{0} \rightarrow A_{0} \otimes_{A_{\ast}}  \cdots \otimes_{A_{\ast}} A_{0} \otimes_{A_{\ast}} B_{0}.$$
	
	\noindent Since the following diagram is a retract in $\CAlg$, 
	\begin{center}
	\begin{tikzcd}
	A_{0} \arrow[d,"f_{0}"] \arrow[r] & A_{\ast} \arrow[d,"f_{\ast}"] \arrow[r] & A_{0} \arrow[d,"f_{0}"] \\
	B_{0} \arrow[r] & B_{\ast} \arrow[r] & B_{0}
	\end{tikzcd}		
	\end{center}

	\noindent we obtain a retract diagram of augmented \Cech conerves
	$$C^{\bullet}(id^{\otimes n} \otimes f_{0})_{+} \rightarrow C^{\bullet}(id^{\otimes(n-1)} \otimes f_{0})_{+} \rightarrow C^{\bullet}(id^{\otimes n} \otimes f_{0})_{+}$$
	
	\noindent and hence a retract diagram	
	$$\THH\left( C^{\bullet}(id^{\otimes n} \otimes f_{0})_{+}\right) \rightarrow \THH\left(C^{\bullet}(id^{\otimes n-1} \otimes f_{0})_{+}\right) \rightarrow \THH\left(C^{\bullet}(id^{\otimes n} \otimes f_{0})_{+}\right).$$
	
	\noindent By the inductive hypothesis, we may conclude that $\THH\left( C^{\bullet}(id^{\otimes n} \otimes f_{0})_{+}\right)$ is a limit diagram. This completes the proof. 	
\end{proof}

\begin{prop}
	Let $f: A \rightarrow B$ be a faithfully flat map in $\CAlg_{\geq 0}$. Then the natural map
	
	$$\THH(H\pi_{\ast}A)_{hC_{p}} \rightarrow \varprojlim\left(\THH(H\pi_{\ast}B^{\otimes_{H\pi_{\ast}A}\bullet})_{hC_{p}}\right)$$
	
	\noindent is a limit diagram. 
\end{prop}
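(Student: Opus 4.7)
The plan is to imitate the proof of Proposition \ref{prop:SpecialCase} essentially verbatim, replacing $\THH$ with $\THH(-)_{hC_p}$ throughout. This works because the two inputs used in that proof, Theorems \ref{thm:BMS2} and \ref{thm:dundas-rognes}, are both stated for $\THH(-)_{hC_p}$ in addition to $\THH$. As a preliminary, I would observe that the definition of faithful flatness depends only on $\pi_0$ and $\pi_*$ of a morphism, so the hypothesis that $f: A \to B$ is faithfully flat in $\CAlg_{\geq 0}$ already implies that $H\pi_*A \to H\pi_*B$ is faithfully flat, placing us back in the setup of Proposition \ref{prop:SpecialCase}.

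Writing $A_* = H\pi_*A$, $A_0 = H\pi_0 A$, and analogously for $B$, I would reintroduce the pushout square
\begin{center}
\begin{tikzcd}
A_{\ast} \arrow[r,"f"] \arrow[d,"\phi_{A_{\ast}}"'] & B_{\ast} \arrow[d,"\phi_{B_{\ast}}"] \\
A_{0} \arrow[r,"g"'] & B_{0}
\end{tikzcd}
\end{center}
and take its left Kan extension along $\Delta_+^{\leq 0} \times \Delta_+^{\leq 0} \subseteq \Delta_+ \times \Delta_+$ to obtain the bisimplicial augmented diagram $X_+^{\bullet,\bullet}$ whose rows and columns are the expected augmented \Cech conerves. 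Applying $\THH(-)_{hC_p}$ yields the exact analogue of Diagram \ref{diagram:specialcase}, and it suffices to show that the three other edges of that square are equivalences.

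For the vertical arrows, the maps $X_+^{m,-1} \to X_+^{m,0}$ remain $1$-connected for each $m$, so Theorem \ref{thm:dundas-rognes} applied to $\THH(-)_{hC_p}$ gives $\THH(X_+^{m,-1})_{hC_p} \simeq \varprojlim_n \THH(X_+^{m,n})_{hC_p}$, and commuting limits then yields the claim for both vertical edges. For the bottom horizontal arrow, I would prove by induction on $n$ that $\THH(X_+^{-1,n})_{hC_p} \to \varprojlim_m \THH(X_+^{m,n})_{hC_p}$ is an equivalence: the base case $n = 0$ is Theorem \ref{thm:BMS2} applied to $\THH(-)_{hC_p}$ along the faithfully flat map $A_0 \to B_0$ of ordinary commutative rings, and the inductive step reuses the retract-in-$\CAlg$ argument of the previous proof, combined with the fact that equivalences in any \category are closed under retracts.

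The main obstacle is essentially cosmetic rather than mathematical: one must verify that each step in the proof of Proposition \ref{prop:SpecialCase} survives the functor swap. The only subtle point is the retract argument in the inductive step, and it survives because any functor preserves retract diagrams, and equivalences form a retract-closed class of morphisms in any \category. No new computational input is required.
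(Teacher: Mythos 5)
Your proposal is correct and matches the paper's own proof, which simply observes that the argument of Proposition \ref{prop:SpecialCase} goes through verbatim because Theorems \ref{thm:BMS2} and \ref{thm:dundas-rognes} are both available for $\THH(-)_{hC_{p}}$. Your additional remarks (that faithful flatness of $f$ already implies faithful flatness of $H\pi_{\ast}A \rightarrow H\pi_{\ast}B$, and that the retract step survives because functors preserve retracts) are accurate fillings-in of details the paper leaves implicit.
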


\begin{proof}
	This argument is identical to the one in Proposition \ref{prop:SpecialCase} since we can apply Theorems \ref{thm:dundas-rognes} and \ref{thm:BMS2} for $\THH(-)_{hC_{p}}$. 
\end{proof}

\section{The May filtration on $\THH$}

Let $A$ be a connective algebra spectrum and let $A_{\bullet}$ be its Whitehead tower. The May filtration on $\THH(A)$, as defined in \cite{angelini-knoll-salch}, is the geometric realization of the following diagram;

\begin{center}
\begin{tikzcd}
A_{\bullet} & A_{\bullet} \Day A_{\bullet} \arrow[l,shift left = 1] \arrow[l, shift right = 1] &  A_{\bullet} \Day A_{\bullet} \Day A_{\bullet} \arrow[l] \arrow[l,shift left = 2] \arrow[l,shift right = 2] & \arrow[l,shift right = 1] \arrow[l,shift right =3 ] \arrow[l,shift left =3] \arrow[l,shift left = 1] \cdots 	
\end{tikzcd}	
\end{center}

\noindent i.e. the cyclic bar construction of $A_{\bullet}$ with respect to the Day convolution product on $\tow(\Sp)$. In order to make this construction precise in the \categorical language, we mimic the construction of $\THH$ in \cite{nikolaus-scholze}. One prerequisite therein, is the symmetric monoidal envelope of \cite[2.2.4.1]{HA}. After a brief discussion of the monoidal envelope, we present a \categorical description of the May filtration, and establish several useful properties.

\begin{rmk}
	The cyclic bar construction can be performed for any associative algebra in a presentably symmetric monoidal \category. Since we are primarily interested in the Whitehead towers of algebra spectra and the Day convolution product, we will not work in such general terms, though many of the results in this section will hold in greater generality. 
\end{rmk}

\subsection{The symmetric monoidal envelope}

\begin{defn}\cite[2.2.4.1]{HA}
	Let $p: C^{\otimes} \rightarrow \Fin_{\ast}$ be a symmetric monoidal \category. The \emph{monoidal envelope} of $C^{\otimes}$, is the fiber product
	$$ \Env(C)^{\otimes} = C^{\otimes} \times_{\Fun(\{0\},\Fin_\ast)} \text{Act}(\Fin_{\ast}),$$
	\noindent where $\text{Act}(\Fin_{\ast}) \subseteq \Fun(\Delta^{1},\Fin_{\ast})$ is the full subcategory spanned by the active morphisms. 
	
	By \cite[2.2.4.4]{HA}, evaluation at $\{1\} \subseteq \Delta^{1}$ induces a map $q: \Env(C)^{\otimes} \rightarrow \Fin_{\ast}$, exhibiting $\Env(C)^{\otimes}$ as a symmetric monoidal \category. If we let $\Env(C) = \Env(C)^{\otimes}_{\brak{1}}$ denote the fiber of $q$ over $\brak{1}$, by unwinding the definition, we may identify $\Env(C)$ with the subcategory $C^{\otimes}_{\Act} \subseteq C^{\otimes}$ spanned by objects and active morphisms between them. In other words, $C^{\otimes}_{\Act}$ is a symmetric monoidal \category, and its monoidal product, $\oplus : C^{\otimes}_{\Act} \times C^{\otimes}_{\Act} \rightarrow C^{\otimes}_{\Act}$, may be described as follows: for $X \in C^{\otimes}_{\brak{n}}$ and $Y \in C^{\otimes}_{\brak{m}}$ corresponding to sequences $(X_{i})_{i=1}^{n}$ and $(Y_{j})_{j=1}^{m}$, the product $X \oplus Y$ corresponds to the concatenation $(X_{i})_{i=1}^{n} \cup (Y_{j})_{j=1}^{m} \in C^{\otimes}_{\brak{n+m}}$. 
\end{defn}

There is a diagonal embedding $\Fin_{\ast} \rightarrow \text{Act}(\Fin_{\ast})$, and the pullback along this embedding induces a lax symmetric monoidal inclusion $i_{C}: C^{\otimes} \subseteq \Env(C)^{\otimes}$, \cite[2.2.4.4, 2.2.4.16]{HA}. The following proposition provides a useful characterization of symmetric monoidal functors out of $\Env(C)^{\otimes}$. 

\begin{prop}\emph{\cite[2.2.4.9]{HA}}\label{prop:LurieEnv}
	Let $C^{\otimes}$ and $D^{\otimes}$ be symmetric monoidal \categories. The inclusion $i_{C} : C^{\otimes} \subseteq \Env(C)^{\otimes}$ induces an equivalence of \categories
	$$\Fun^{\otimes}(\Env(C),D) \simeq \Alg_{C}(D).$$
	\noindent Here, $\Fun^{\otimes}(\Env(C),D)$ denotes the \category of symmetric monoidal functors $\Env(C)^{\otimes} \rightarrow D^{\otimes}$ and $\Alg_{C}(D)$ is the \category of lax symmetric monoidal functors $C^{\otimes} \rightarrow D^{\otimes}$. 
\end{prop}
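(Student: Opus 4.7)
The plan is to produce a homotopy inverse to the restriction functor $i_C^{\ast} : \Fun^{\otimes}(\Env(C), D) \to \Alg_C(D)$. This functor is well-defined since $i_C$ is lax symmetric monoidal by \cite[2.2.4.16]{HA}, so restriction of any symmetric monoidal $G$ along $i_C$ is lax symmetric monoidal. I would construct an extension assignment $F \mapsto \tilde F$ in the opposite direction and then check that both composites are naturally equivalent to the identity.

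For the extension, I would exploit the explicit description of $\Env(C)^{\otimes}_{\langle 1\rangle}$ as $C^{\otimes}_{\Act}$ recorded in the definition above: objects are finite sequences $(X_1, \ldots, X_n)$ of objects of $C$, morphisms are active maps in $C^{\otimes}$, and the monoidal product is concatenation. Given a lax symmetric monoidal $F : C^{\otimes} \to D^{\otimes}$, I would set $\tilde F(X_1, \ldots, X_n) = F X_1 \otimes_D \cdots \otimes_D F X_n$ and use the lax structure maps of $F$ to produce the action on morphisms over active maps in $\Fin_{\ast}$, assembled via the pullback description $\Env(C)^{\otimes} = C^{\otimes} \times_{\Fun(\{0\},\Fin_{\ast})} \text{Act}(\Fin_{\ast})$. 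Symmetric monoidality of $\tilde F$ then amounts to preservation of $p$-cocartesian edges; by \cite[2.2.4.4]{HA} the cocartesian edges in $\Env(C)^{\otimes}$ are precisely the active ones, so it suffices to verify that $\tilde F$ sends the canonical active edge $(X_1, \ldots, X_n) \to X_1 \otimes \cdots \otimes X_n$ to the tensor-product edge in $D^{\otimes}$, which holds by construction.

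Finally, I would verify that both composites are naturally equivalent to the identity. The equivalence $\tilde F \circ i_C \simeq F$ is immediate, since $\tilde F$ restricted to length-one sequences agrees with $F$ tautologically. For the reverse composite, every object of $\Env(C)^{\otimes}_{\langle 1\rangle}$ decomposes (via an active edge) as a monoidal product of images of $i_C$, so any symmetric monoidal $G$ is determined up to contractible choice by its restriction along $i_C$. The main obstacle is that the pullback definition of $\Env(C)^{\otimes}$ does not permit constructing functors out of it merely by specifying their values on objects and morphisms; significant coherence data must be packaged simultaneously. In practice I would follow the strategy of \cite[2.2.4.9]{HA}, constructing $\tilde F$ directly at the level of simplicial sets and invoking the characterization of cocartesian edges to deduce symmetric monoidality.
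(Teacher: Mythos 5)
First, a point of comparison: the paper does not prove this proposition at all --- it is quoted verbatim from \cite[2.2.4.9]{HA} and used as a black box --- so there is no in-paper argument to measure your proposal against. Judged on its own terms, your sketch has the right overall shape: the inverse to restriction along $i_{C}$ should send a lax symmetric monoidal $F$ to the functor $(X_{1},\dots,X_{n}) \mapsto FX_{1} \otimes_{D} \cdots \otimes_{D} FX_{n}$. But there are two genuine problems. The first is your characterization of cocartesian edges. By \cite[2.2.4.4]{HA} a morphism of $\Env(C)^{\otimes}$ is cocartesian over $\Fin_{\ast}$ if and only if its image in $C^{\otimes}$ is an equivalence; it is \emph{not} true that the cocartesian edges are ``precisely the active ones.'' For example, the edge $(X,Y) \rightarrow (X \otimes Y)$ of $\Env(C) = C^{\otimes}_{\Act}$ lying over $\mathrm{id}_{\brak{1}}$, whose image in $C^{\otimes}$ is the multiplication map, is active but not cocartesian, whereas the cocartesian lift of the active map $\brak{2} \rightarrow \brak{1}$ starting at $((X),(Y))$ is the concatenation edge ending at $(X,Y)$, whose image in $C^{\otimes}$ is an identity. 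Consequently the monoidality check for $\tilde F$ is not the one you describe: one must show that $\tilde F$ carries the equivalence-over-$C^{\otimes}$ edges to the tensoring edges of $D^{\otimes}$, and this is exactly where the lax structure maps of $F$ and all of their higher coherences intervene.

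The second and more serious issue is that your final paragraph concedes that assembling $\tilde F$ as an actual map of simplicial sets over $\Fin_{\ast}$ requires packaging ``significant coherence data,'' and proposes to do so by ``following the strategy of \cite[2.2.4.9]{HA}.'' That is circular: the coherent construction of $\tilde F$ --- equivalently, the proof that restriction along $i_{C}$ is an equivalence onto $\Alg_{C}(D)$ --- is the entire content of the proposition, and everything preceding it in your sketch is a plausibility argument rather than a proof. Since the paper itself imports the result wholesale, the honest options are either to cite it, as the paper does, or to reproduce the argument of \cite[2.2.4.9]{HA} in full; a pointwise formula for the would-be inverse does not substitute for either.
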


Under this equivalence, the identity functor $C \rightarrow C$, which is lax monoidal, corresponds to a symmetric monoidal functor, $\otimes_{C} : \Env(C) = C^{\otimes}_{\Act} \rightarrow C$, given on vertices by $(X_{1},\dots, X_{n}) \mapsto X_{1} \otimes \cdots \otimes X_{n}$. In order to establish several key properties of the May filtration on $\THH$, we need a lemma on the interaction of $\otimes_{C}$ with symmetric monoidal functors. Suppose we are given a symmetric monoidal functor, $F^{\otimes} : C^{\otimes} \rightarrow D^{\otimes}$. Pulling this morphism back along $\text{Act}(\Fin_{\ast})$, produces a functor $\Env(F)^{\otimes} : \Env(C)^{\otimes} \rightarrow \Env(D)^{\otimes}$ which is also symmetric monoidal by \cite[2.2.4.15]{HA}. $\Env(F)^{\otimes}$ fits into the following diagram,
\begin{center}
\begin{tikzcd}
C^{\otimes} \arrow[r,"F^{\otimes}"] \arrow[d, "i_{C}"'] & D^{\otimes} \arrow[d,"i_{D}"] \\
\Env(C)^{\otimes} \arrow[r,"\Env(F)^{\otimes}"'] & \Env(D)^{\otimes}
\end{tikzcd}	
\end{center}

\noindent which is commutative in virtue of the fact that $i_{C}$ and $i_{D}$ are induced by the diagonal.

\begin{lem}\label{lem:Env}
	Let $F^{\otimes} : C^{\otimes} \rightarrow D^{\otimes}$ be a symmetric monoidal functor. There is an equivalence
	\[ \otimes_{D} \circ \Env(F)^{\otimes} \simeq F^{\otimes} \circ \otimes_{C}.\]
\end{lem}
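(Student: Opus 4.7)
The plan is to leverage the universal property of the symmetric monoidal envelope from Proposition \ref{prop:LurieEnv}. Both $\otimes_D \circ \Env(F)^\otimes$ and $F^\otimes \circ \otimes_C$ are symmetric monoidal functors $\Env(C)^\otimes \to D^\otimes$, and the equivalence $\Fun^\otimes(\Env(C),D) \simeq \Alg_C(D)$ is implemented by precomposition with the lax monoidal inclusion $i_C : C^\otimes \to \Env(C)^\otimes$. Thus it suffices to produce a natural equivalence of lax symmetric monoidal functors $C^\otimes \to D^\otimes$ after restricting both functors along $i_C$.

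First I would observe that, by construction, $\otimes_C : \Env(C) \to C$ is the symmetric monoidal functor which corresponds to the lax monoidal identity $\mathrm{id}_{C^\otimes}$ under the equivalence of Proposition \ref{prop:LurieEnv}. Unwinding the equivalence, this means precisely that $\otimes_C \circ i_C \simeq \mathrm{id}_{C^\otimes}$, and similarly $\otimes_D \circ i_D \simeq \mathrm{id}_{D^\otimes}$. Consequently
\[ F^\otimes \circ \otimes_C \circ i_C \;\simeq\; F^\otimes \circ \mathrm{id}_{C^\otimes} \;=\; F^\otimes \]
as lax symmetric monoidal functors.

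Next I would use the commutative square exhibited immediately before the statement: since $i_C$ and $i_D$ are both induced by the diagonal $\Fin_\ast \to \mathrm{Act}(\Fin_\ast)$, naturality of the pullback defining $\Env(-)^\otimes$ gives $\Env(F)^\otimes \circ i_C \simeq i_D \circ F^\otimes$. Combining this with the previous step,
\[ \otimes_D \circ \Env(F)^\otimes \circ i_C \;\simeq\; \otimes_D \circ i_D \circ F^\otimes \;\simeq\; \mathrm{id}_{D^\otimes} \circ F^\otimes \;=\; F^\otimes. \]
Both symmetric monoidal functors in question therefore pull back along $i_C$ to the same lax monoidal functor $F^\otimes$, so by the equivalence in Proposition \ref{prop:LurieEnv} they are equivalent as symmetric monoidal functors $\Env(C)^\otimes \to D^\otimes$.

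I do not expect a genuine obstacle here: the argument is entirely formal manipulation of universal properties. The only delicate point, which I would make explicit, is the identification $\otimes_C \circ i_C \simeq \mathrm{id}_{C^\otimes}$ — this is really the content of the construction of $\otimes_C$ from the equivalence of \cite[2.2.4.9]{HA}, and once accepted, the rest of the verification is a one-line diagram chase.
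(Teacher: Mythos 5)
Your proposal is correct and follows essentially the same route as the paper's proof: reduce to checking equivalence after precomposition with $i_{C}$ via Proposition \ref{prop:LurieEnv}, then apply the identities $\Env(F)^{\otimes} \circ i_{C} \simeq i_{D} \circ F^{\otimes}$, $\otimes_{C} \circ i_{C} \simeq \mathrm{id}$, and $\otimes_{D} \circ i_{D} \simeq \mathrm{id}$. The only difference is that you spell out the intermediate steps slightly more explicitly than the paper does.
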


\begin{proof}
	As the equivalence in Proposition \ref{prop:LurieEnv} is given by precomposing by the lax symmetric monoidal inclusion $i_{C} : C^{\otimes} \subseteq \Env(C)^{\otimes}$, we can check the desired functors are equivalent by showing that 
	$$\otimes_{D} \circ \Env(F)^{\otimes} \circ i_{C}  \simeq F^{\otimes} \circ \otimes_{C} \circ i_{C}.$$
	\noindent This is immediate since $\Env(F)^{\otimes} \circ i_{C} \simeq i_{D} \circ F^{\otimes}$, $\otimes_{C} \circ i_{C} \simeq id$, and $\otimes_{D} \circ i_{D} \simeq id$. 	
\end{proof}

\begin{rmk}
	In the case where $C = \tow(\Sp)$, we will denote $\otimes_{C}$ by $\Day$, and in the case where $C = \Sp$, we will denote $\otimes_{C}$ by $\otimes$. 	
\end{rmk}

\subsection{The May filtration on $\THH$} 
\begin{defn}
	Let $A_{\bullet} \in \Alg^{\tow}$, given by a section  $A_{\bullet}^{\otimes} : \Assoc^{\otimes} \rightarrow \tow(\Sp)^{\otimes}$. Let $\THHDay(A_{\bullet}) \in \tow(\Sp)^{B\mathbb{T}}$ denote the colimit of the diagram	
	\begin{center}
	\begin{tikzcd}
	N(\Lambda^{\op}) \arrow[r,"V^{o}"] & \Assoc^{\otimes}_{\Act} \arrow[r,"A_{\bullet}^{\otimes}"] & \tow(\Sp)^{\otimes}_{\Act} \arrow[r,"\Day"] & \tow(\Sp),	
	\end{tikzcd}		
	\end{center}
	\noindent where $V^{o}$ is the map appearing in \cite[B.1]{nikolaus-scholze}.
\end{defn}

\begin{rmk}\label{rmk:GeneralTHH}
	This is essentially the definition of $\THH$ as given in \cite{nikolaus-scholze}, the only salient difference being that we are using the Day convolution of towers to form the cyclic bar construction as opposed to the smash product. Additionally, we know $\THHDay(A_{\bullet}) \in \tow(\Sp)^{B\mathbb{T}}$ since the geometric realization of a cyclic object admits a $\mathbb{T}$-action, \cite[B.5]{nikolaus-scholze}. 
\end{rmk}

\begin{prop}
	The functor $\THHDay : \Alg^{\tow} \rightarrow \tow(\Sp)^{B\mathbb{T}}$ is symmetric monoidal.
\end{prop}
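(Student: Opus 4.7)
The plan is to identify $\THHDay$ as the cyclic bar construction performed internally in the presentably symmetric monoidal stable \category $(\tow(\Sp),\Day)$, which reduces the problem to the standard argument that this construction is symmetric monoidal, as carried out for ordinary $\THH$ in \cite{nikolaus-scholze}. Concretely, I would factor $\THHDay$ as
$$\Alg^{\tow} \xrightarrow{B^{\mathrm{cyc}}} \Fun(N(\Lambda^{\op}),\tow(\Sp)) \xrightarrow{|\cdot|} \tow(\Sp)^{B\mathbb{T}},$$
where $B^{\mathrm{cyc}}$ sends $A_{\bullet}$ to the composite $\Day \circ A_{\bullet}^{\otimes} \circ V^{o}$, and $|\cdot|$ denotes geometric realization with its canonical $\mathbb{T}$-action from \cite[B.5]{nikolaus-scholze}. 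The intermediate functor category is equipped with pointwise Day convolution, and it then suffices to show each functor in this composite is symmetric monoidal.

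For $B^{\mathrm{cyc}}$, the essential point is that for $A_{\bullet},B_{\bullet} \in \Alg^{\tow}$ with algebra tensor product $A_{\bullet} \otimes_{\Alg} B_{\bullet}$, there is a coherent equivalence $(A_{\bullet} \otimes_{\Alg} B_{\bullet})^{\Day(n+1)} \simeq A_{\bullet}^{\Day(n+1)} \Day B_{\bullet}^{\Day(n+1)}$ in $n$, built into the definition of the symmetric monoidal structure on $\Alg(\cC)$ for $\cC$ a presentably symmetric monoidal \category \cite[3.2.4.3]{HA}. Combined with Proposition \ref{prop:LurieEnv} and Lemma \ref{lem:Env} applied to the symmetric monoidal functor $A_{\bullet}^{\otimes} : \Assoc^{\otimes} \to \tow(\Sp)^{\otimes}$, this assembles into the required symmetric monoidal structure on $B^{\mathrm{cyc}}$.

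For geometric realization, since $\Day$ distributes over colimits separately in each variable (by the presentability of $\tow(\Sp)$ discussed at the start of Section 2), the canonical comparison map $|X \Day_{\mathrm{pt}} Y| \to |X| \Day |Y|$ is an equivalence provided the diagonal $N(\Lambda^{\op}) \to N(\Lambda^{\op}) \times N(\Lambda^{\op})$ is cofinal; I would verify this via Quillen's Theorem A \cite[4.1.3.1]{HTT}, mirroring the classical argument for $N(\Delta^{\op})$. The principal obstacle will be the coherent bookkeeping of $\mathbb{T}$-actions and operadic structures at each stage; once these are correctly set up via the envelope formalism of Section 4.1, the symmetric monoidality of $\THHDay$ reduces to the two observations above.
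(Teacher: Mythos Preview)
Your proposal is correct and follows essentially the same route as the paper: both rest on \cite[3.2.4.3]{HA} for the symmetric monoidal structure on $\Alg^{\tow}$ and on the fact that the Day product commutes with the sifted colimit computing the realization. The paper's proof is simply the terse version of what you have unpacked into the factorization $B^{\mathrm{cyc}}$ followed by $|\cdot|$.

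One small slip: you refer to $A_{\bullet}^{\otimes} : \Assoc^{\otimes} \to \tow(\Sp)^{\otimes}$ as a \emph{symmetric monoidal} functor and invoke Lemma~\ref{lem:Env} on it, but an algebra is only a map of $\infty$-operads (lax monoidal). Lemma~\ref{lem:Env} does not apply directly; rather, Proposition~\ref{prop:LurieEnv} is what promotes $A_{\bullet}^{\otimes}$ to a genuine symmetric monoidal functor out of $\Env(\Assoc)$, after which the envelope naturality you want follows. Also, rather than proving cofinality of the diagonal $N(\Lambda^{\op}) \to N(\Lambda^{\op})^{2}$ from scratch via Theorem~A, it is cleaner to use the cofinal inclusion $N(\Delta^{\op}) \hookrightarrow N(\Lambda^{\op})$ and the known siftedness of $N(\Delta^{\op})$; this is what the paper's phrase ``Day convolution preserves sifted colimits'' is implicitly invoking.
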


\begin{proof}
	This essentially follows from the fact that Day convolution preserves sifted colimits. The careful reader should consult \cite[3.2.4.3, 3.2.4.3]{HA} to explicitly construct the functor
	$$(\THHDay)^{\otimes} : (\Alg^{\tow})^{\otimes} \rightarrow (\tow(\Sp)^{B\mathbb{T}})^{\otimes}$$
	\noindent which exhibits $\THHDay$ as symmetric monoidal. 
\end{proof}

\begin{cor}
	$\THHDay$ induces a functor $\CAlgTow \rightarrow (\CAlgTow)^{B\mathbb{T}}$. 	
\end{cor}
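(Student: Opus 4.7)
The plan is to deduce the corollary formally from the preceding proposition, by applying the $\CAlg(-)$ construction to the symmetric monoidal functor $\THHDay : \Alg^{\tow} \to \tow(\Sp)^{B\mathbb{T}}$ and then identifying the source and target of the resulting functor with $\CAlgTow$ and $(\CAlgTow)^{B\mathbb{T}}$ respectively.

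First, I would invoke the functoriality of $\CAlg(-)$ on symmetric monoidal \categories (\cite[2.1.3.1]{HA}) to extract from $\THHDay$ a functor
$$\CAlg(\Alg^{\tow}) \longrightarrow \CAlg(\tow(\Sp)^{B\mathbb{T}}).$$
Next, I would identify the source. Since $\Alg^{\tow} = \Alg(\tow(\Sp))$ inherits its symmetric monoidal structure from the Day convolution on $\tow(\Sp)$, a version of Dunn additivity (\cite[3.2.4.5]{HA}, which identifies $\mathbb{E}_\infty$-algebras in $\mathbb{E}_1$-algebras with $\mathbb{E}_\infty$-algebras) yields a canonical equivalence
$$\CAlg(\Alg(\tow(\Sp))) \simeq \CAlg(\tow(\Sp)) = \CAlgTow.$$

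For the target, I would use that $B\mathbb{T}$ is a Kan complex, so that the symmetric monoidal structure on $\tow(\Sp)^{B\mathbb{T}} = \Fun(B\mathbb{T},\tow(\Sp))$ can be taken pointwise. Since the forgetful functor from $\CAlg$ creates limits, and limits in functor categories are pointwise, one obtains a natural equivalence
$$\CAlg(\tow(\Sp)^{B\mathbb{T}}) \simeq \CAlg(\tow(\Sp))^{B\mathbb{T}} = (\CAlgTow)^{B\mathbb{T}}.$$
Composing the three identifications produces the desired functor $\CAlgTow \to (\CAlgTow)^{B\mathbb{T}}$, and unwinding the construction shows that on underlying objects it agrees with the functor produced in the preceding proposition.

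The only nontrivial input is the Dunn additivity step, which is the one place where a genuine coherence theorem (rather than a bookkeeping argument about symmetric monoidal functors) is used; everything else is essentially formal manipulation of the symmetric monoidal structures already in play.
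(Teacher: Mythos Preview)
Your proposal is correct and follows essentially the same approach as the paper: apply $\CAlg(-)$ to the symmetric monoidal functor $\THHDay$, then identify the source via $\CAlg(\Alg^{\tow}) \simeq \CAlgTow$ and the target via $\CAlg(\tow(\Sp)^{B\mathbb{T}}) \simeq (\CAlgTow)^{B\mathbb{T}}$. The paper's proof is simply a terser version of what you wrote, asserting the two identifications without spelling out the Dunn additivity input or the pointwise reasoning for the functor category.
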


\begin{proof}
	Since $\THHDay$ is symmetric monoidal, it induces a functor 
	$$\CAlg(\Alg^{\tow}) \rightarrow \CAlg(\tow(\Sp)^{B\mathbb{T}}).$$
	\noindent As $\CAlg(\Alg^{\tow}) \simeq \CAlg(\tow(\Sp))$ and $\CAlg(\tow(\Sp)^{B\mathbb{T}}) \simeq (\CAlg^{\tow})^{B\mathbb{T}}$, we conclude the result.	
\end{proof}

For connective commutative ring spectra, the May filtration on $\THH$ can be given an alternative construction, following \cite{angelini-knoll-salch}. Since $\CAlgTow$ is presentable, it is automatically tensored over spaces \cite[4.4.4]{HTT}; we denote this by $X \otimes A_{\bullet}$. Using the simplicial model, $\Delta^{1}/\partial \Delta^{1}$, for $S^{1}$, a standard argument shows we may identify $S^{1} \otimes A_{\bullet}$ in $\CAlgTow$ with the cyclic bar construction of $A_{\bullet}$, where $A_{\bullet}$ is the Whitehead tower of $A \in \CAlg_{\geq 0}$. More generally, given $A_{\bullet} \in \CAlgTow$ and $X$ a finite space, the tower $X \otimes A_{\bullet} \in \CAlgTow$ gives a filtration on $X \otimes A_{0}$, which is also called the \emph{May filtration} in \cite[3.3.3]{angelini-knoll-salch}. 

The remainder of this subsection is devoted an \categorical treatment of results in \cite{angelini-knoll-salch}. In particular, Proposition \ref{prop:THHMayEv0} allows us to define the May filtration on $\THH$ in the \categorical language.

\begin{prop}\label{prop:THHMayEv0}
	Let $A \in \Alg_{\geq 0}$ and let $A_{\bullet}$ denote the Whitehead tower of $A$. There is a canonical equivalence of spectra with $\mathbb{T}$-action, $\ev_{0}\THHDay(A_{\bullet}) \simeq \THH(A)$. In the case where $A \in \CAlg_{\geq 0 }$, this is an equivalence in $\CAlg^{B\mathbb{T}}$.
\end{prop}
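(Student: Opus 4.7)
The plan is to establish the equivalence by commuting $\ev_0$ past the colimit defining $\THHDay(A_\bullet)$ and invoking Lemma~\ref{lem:Env} together with the symmetric monoidality of $\ev_0$. Since limits and colimits in $\tow(\Sp)=\Fun(\zz^{op}_{\geq 0},\Sp)$ are computed pointwise, the functor $\ev_0$ preserves small colimits. Applied to the cyclic bar construction, this gives
\[
\ev_0\THHDay(A_\bullet)\simeq \colim_{N(\Lambda^{op})}\bigl(\ev_0\circ \Day\circ A_\bullet^\otimes\circ V^o\bigr).
\]
By Proposition~\ref{prop:Daycofinal}, $\ev_0$ is symmetric monoidal, so there is a corresponding functor $\Env(\ev_0)^\otimes$ on envelopes, and Lemma~\ref{lem:Env} applied to $F=\ev_0$ yields $\ev_0\circ \Day\simeq \otimes\circ \Env(\ev_0)^\otimes$.

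Next, I would use symmetric monoidality of $\ev_0$ once more to identify the composite $\Env(\ev_0)^\otimes\circ A_\bullet^\otimes$ with $(\ev_0\circ A_\bullet)^\otimes$ as a map $\Assoc^\otimes_{\Act}\to \Sp^\otimes_{\Act}$. Since $A$ is connective, the $0$-th term of its Whitehead tower is $\tau_{\geq 0}A\simeq A$, so $\ev_0\circ A_\bullet\simeq A$ as associative algebras. Substituting into the colimit gives
\[
\ev_0\THHDay(A_\bullet)\simeq \colim_{N(\Lambda^{op})}\bigl(\otimes\circ A^\otimes\circ V^o\bigr)\simeq \THH(A),
\]
by the Nikolaus-Scholze definition of $\THH(A)$.

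For $\mathbb{T}$-equivariance, the colimit of a cyclic object over $N(\Lambda^{op})$ naturally carries a $\mathbb{T}$-action, and the equivalences above are natural in the cyclic structure, so the identification takes place in $\Sp^{B\mathbb{T}}$. When $A\in\CAlg_{\geq 0}$, the Whitehead tower functor (being lax symmetric monoidal) produces $A_\bullet\in\CAlgTow$; combining with the corollary that $\THHDay$ lifts to $\CAlgTow\to(\CAlgTow)^{B\mathbb{T}}$ and the fact that the symmetric monoidal $\ev_0$ induces $\CAlgTow\to\CAlg$, the identification upgrades to an equivalence in $\CAlg^{B\mathbb{T}}$.

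The main obstacle is bookkeeping at the $\infty$-operadic level: each individual step is essentially formal, but ensuring naturality with respect to the $\Lambda^{op}$-structure---so that the $\mathbb{T}$-action and multiplicative structure transfer compatibly---requires that every identification take place at the level of $\infty$-operadic maps rather than after passing to underlying \categories. The symmetric monoidality of $\ev_0$ and Lemma~\ref{lem:Env} are precisely the tools that make this rigorous.
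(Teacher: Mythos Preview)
Your proposal is correct and follows essentially the same route as the paper: use the symmetric monoidality of $\ev_0$ (Proposition~\ref{prop:Daycofinal}) together with Lemma~\ref{lem:Env} to obtain $\ev_0\circ\Day\simeq\otimes\circ\Env(\ev_0)^\otimes$, then commute $\ev_0$ past the $N(\Lambda^{op})$-colimit and use $\ev_0 A_\bullet\simeq A$ to recover $\THH(A)$. The paper's treatment of the commutative case is slightly more direct, citing \cite[3.2.3.1]{HA} to say that the induced functor $\ev_0:\CAlg^{\tow}\to\CAlg$ preserves sifted colimits, but your justification via the lax monoidal Whitehead tower and the lift of $\THHDay$ to $\CAlgTow$ amounts to the same thing.
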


\begin{proof}
	Proposition \ref{prop:Daycofinal} states that $\ev_{0} : \tow(\Sp) \rightarrow \Sp$ is symmetric monoidal so we may apply Lemma \ref{lem:Env} to extract the following commutative square.
	\begin{center}
	\begin{tikzcd}
	\tow(\Sp)^{\otimes}_{\Act} \arrow[r,"\Day"] \arrow[d,"(\ev_{0})^{\otimes}_{\emph{act}}"'] & \tow(\Sp) \arrow[d,"\ev_{0}"] \\
	\Sp_{\Act}^{\otimes} \arrow[r,"\otimes"'] & \Sp 
	\end{tikzcd}		
	\end{center}

	\noindent Additionally, note that $\ev_{0}$ commutes with colimits and that $\ev_{0}A_{\bullet} \simeq A$. We obtain the following chain of canonical equivalences in $\Sp^{B\mathbb{T}}$:
	\begin{align*}
	\ev_{0}\THHDay(A_{\bullet})& = \ev_{0}\varinjlim\left(\Day \circ (A^{\otimes}_{\bullet})_{\Act} \circ V^{o} \right) \\
							   & \simeq \varinjlim\left( \ev_{0} \circ \Day \circ (A^{\otimes}_{\bullet})_{\Act} \circ V^{o}\right) \\
							   & \simeq \varinjlim\left(\otimes \circ A^{\otimes}_{\Act} \circ V^{o} \right) \\
							   & = \THH(A). 	
	\end{align*}
	In the case where $A \in \CAlg_{\geq 0}$, a similar argument works, since the induced functor $\ev_{0} : \CAlg^{\tow} \rightarrow \CAlg$ preserves sifted colimits, by \cite[3.2.3.2]{HA}.	
\end{proof}

\begin{rmk}
	The proof of Proposition \ref{prop:THHMayEv0} yields a slightly more general result; there is a canonical equivalence of spectra with $\mathbb{T}$-action, $\ev_{0} \THHDay(A_{\bullet}) \simeq \THH(A_{0})$, for any $A_{\bullet} \in \Alg^{\tow}$.	
\end{rmk}

\begin{defn}\cite[3.3.3, 3.4.9]{angelini-knoll-salch}
	Let $A \in \Alg_{\geq 0}$ and let $A_{\bullet}$ denote the Whitehead tower of $A$. The \emph{May filtration} of $\THH(A)$ is the tower $\THHDay(A_{\bullet})$. We will let $\Fil_{i}^{\May}$ denote $\ev_{i}\THHDay(A_{\bullet})$ and let $\gr_{i}^{\May}$ denote $\Fil_{i}^{\May}/\Fil_{i+1}^{\May}$. While our notation makes no reference to $A$, the context of our usage should indicate the underlying connective ring spectrum. 
	
	If we view $\THH$ as a functor, $\THH : \Alg \rightarrow \Sp^{B\mathbb{T}}$, we obtain a tower of functors
	$$\cdots \rightarrow \Fil^{\May}_{i} \rightarrow \cdots \rightarrow \Fil^{\May}_{1} \rightarrow \THH.$$
	\noindent We call this tower the \emph{May filtration of the functor} $\THH$, and by taking cofibers, it induces another tower 
	$$ \cdots \rightarrow \THH/\Fil^{\May}_{i} \rightarrow \cdots \rightarrow  \THH/\Fil^{\May}_{1} \rightarrow 0.$$
\end{defn}

The following is the \categorical version of \cite[3.3.10]{angelini-knoll-salch}, which is crucial for the sequel. 

\begin{prop}\label{prop:DayGraded}
	For $A_{\bullet} \in \Alg^{\tow}$, there is a canonical equivalence of spectra with $\mathbb{T}$-action, $\gr_{\ast}\THHDay(A_{\bullet}) \simeq \THH(\gr_{\ast}A_{\bullet})$. In the case where $A_{\bullet} \in \CAlg^{\tow}$, this is an equivalence in $\CAlg^{B\mathbb{T}}$.
\end{prop}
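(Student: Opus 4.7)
The plan is to mimic the proof of Proposition \ref{prop:THHMayEv0} almost verbatim, substituting $\gr_\ast$ for $\ev_0$ and invoking the monoidality and colimit-preservation of the associated graded functor established in Proposition \ref{prop:grmonoidal}.

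First I would recall that $\gr_\ast : \tow(\Sp) \rightarrow \Sp$ is colimit-preserving and symmetric monoidal (Proposition \ref{prop:grmonoidal}), so Lemma \ref{lem:Env} applied to $\gr_\ast^\otimes$ furnishes the commutative square
\begin{center}
\begin{tikzcd}
\tow(\Sp)^{\otimes}_{\Act} \arrow[r,"\Day"] \arrow[d,"\Env(\gr_\ast)^{\otimes}"'] & \tow(\Sp) \arrow[d,"\gr_\ast"] \\
\Sp_{\Act}^{\otimes} \arrow[r,"\otimes"'] & \Sp
\end{tikzcd}
\end{center}
Moreover, since the composition $\Env(\gr_\ast)^\otimes \circ A_\bullet^\otimes : \Assoc^\otimes \to \Sp^\otimes$ exhibits the underlying algebra structure on $\gr_\ast A_\bullet$, it is equivalent to $(\gr_\ast A_\bullet)^\otimes$.

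Next I would chase the definition of $\THHDay(A_\bullet)$ as a colimit along $V^o : N(\Lambda^{\op}) \to \Assoc^\otimes_{\Act}$. Using the commutative square above together with the fact that $\gr_\ast$ preserves colimits, one obtains the canonical chain of equivalences in $\Sp^{B\mathbb{T}}$:
\begin{align*}
\gr_\ast \THHDay(A_\bullet) &= \gr_\ast \varinjlim\!\left(\Day \circ (A_\bullet^\otimes)_{\Act} \circ V^o\right) \\
&\simeq \varinjlim\!\left(\gr_\ast \circ \Day \circ (A_\bullet^\otimes)_{\Act} \circ V^o\right) \\
&\simeq \varinjlim\!\left(\otimes \circ \Env(\gr_\ast)^\otimes_{\Act} \circ (A_\bullet^\otimes)_{\Act} \circ V^o\right) \\
&\simeq \varinjlim\!\left(\otimes \circ ((\gr_\ast A_\bullet)^\otimes)_{\Act} \circ V^o\right) \\
&= \THH(\gr_\ast A_\bullet).
\end{align*}
The $\mathbb{T}$-action is carried along because the entire argument takes place on the level of cyclic objects, so all equivalences above are naturally $\mathbb{T}$-equivariant by \cite[B.5]{nikolaus-scholze}.

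For the commutative case $A_\bullet \in \CAlgTow$, I would appeal to Corollary \ref{cor:Alggrmonoidal}, which says that $\gr_\ast$ induces a functor $\CAlgTow \to \CAlg$ preserving sifted colimits. Since $N(\Lambda^{\op})$ is sifted, the same colimit computation runs in $\CAlg$, and yields the desired equivalence in $\CAlg^{B\mathbb{T}}$.

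I do not anticipate a serious obstacle: the crux is really the interaction of symmetric monoidal functors with the cyclic bar construction, which is packaged cleanly in Lemma \ref{lem:Env}. The only mild subtlety is verifying that the equivalence $\Env(\gr_\ast)^\otimes \circ A_\bullet^\otimes \simeq (\gr_\ast A_\bullet)^\otimes$ is compatible with the cyclic/$\mathbb{T}$-structure; this follows formally since all constructions are functorial in the algebra and the $\mathbb{T}$-action arises purely from the cyclic combinatorics of $V^o$.
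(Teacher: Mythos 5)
Your proposal is correct and follows essentially the same route as the paper's own proof: both apply Lemma \ref{lem:Env} to the symmetric monoidal, colimit-preserving functor $\gr_\ast$ to get the commuting square, pass $\gr_\ast$ through the colimit defining $\THHDay$, and handle the commutative case via the sifted-colimit preservation of the induced functor on $\CAlg^{\tow}$. The extra step you flag — identifying $\Env(\gr_\ast)^{\otimes} \circ A_\bullet^{\otimes}$ with $(\gr_\ast A_\bullet)^{\otimes}$ — is implicit in the paper's chain of equivalences, so nothing is missing.
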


\begin{proof}
	By Proposition \ref{prop:grmonoidal} and Lemma \ref{lem:Env}, it follows that
	\begin{center}
	\begin{tikzcd}
		\tow(\Sp)^{\otimes}_{\Act} \arrow[r,"\Day"] \arrow[d,"(\gr_{\ast}^{\otimes})_{\Act}"'] & \tow(\Sp) \arrow[d,"\gr_{\ast}"] \\
		\Sp_{\Act}^{\otimes} \arrow[r,"\otimes"'] & \Sp
	\end{tikzcd}		
	\end{center}

	\noindent commutes. Additionally, since $\gr_{\ast}$ preserves, 
	
	\begin{align*}
	\gr_{\ast}\THHDay(A_{\bullet}) & = \gr_{\ast}\varinjlim\left(\Day \circ (A^{\otimes}_{\bullet})_{\Act} \circ V^{o} \right) \\
							   & \simeq \varinjlim\left( \gr_{\ast} \circ \Day \circ (A^{\otimes}_{\bullet})_{\Act} \circ V^{o}\right) \\
							   & \simeq \varinjlim\left(\otimes \circ (\gr_{\ast}A_{\bullet})^{\otimes}_{\Act} \circ V^{o} \right) \\
							   & = \THH(\gr_{\ast}A_{\bullet}).
	\end{align*}
	In the case where $A \in \CAlg_{\geq 0}$, a similar argument works, since the induced functor $\gr_{\ast} : \CAlg^{\tow} \rightarrow \CAlg$ preserves sifted colimits, by \cite[3.2.3.2]{HA}.	
\end{proof}

An immediate consequence of the previous result is the following:

\begin{cor}\emph{\cite[3.4.11]{angelini-knoll-salch}}\label{cor:THHMayGraded}
	For $A \in \CAlg_{\geq 0}$ and $A_{\bullet}$ the Whitehead tower of $A$, there is a canonical equivalence in $\CAlg^{B\mathbb{T}}$, $\gr^{\May}_{\ast}\THH(A) \simeq \THH(H\pi_{\ast}A)$. 	
\end{cor}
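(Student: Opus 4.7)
The plan is to unwind the definition of $\gr^{\May}_{\ast}\THH(A)$ and apply Proposition \ref{prop:DayGraded} directly. By construction, the May filtration of $\THH(A)$ is the tower $\THHDay(A_{\bullet})$, so $\Fil^{\May}_{i} = \ev_{i}\THHDay(A_{\bullet})$, and accordingly the associated graded $\gr^{\May}_{\ast}\THH(A)$ is, essentially tautologically, the graded object $\gr_{\ast}\THHDay(A_{\bullet})$ obtained by feeding $\THHDay(A_{\bullet})$ through the associated graded functor of Section \ref{section:gr}. Since $A \in \CAlg_{\geq 0}$, lax monoidality of the Whitehead tower functor (together with the remark following that proposition) ensures that $A_{\bullet} \in \CAlgTow$, and the earlier corollary on $\THHDay$ produces an element of $(\CAlg^{\tow})^{B\mathbb{T}}$; thus all equivalences below take place in $\CAlg^{B\mathbb{T}}$.

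Once this reduction is made, Proposition \ref{prop:DayGraded} gives a canonical equivalence
\[ \gr_{\ast}\THHDay(A_{\bullet}) \simeq \THH(\gr_{\ast}A_{\bullet}) \]
in $\CAlg^{B\mathbb{T}}$. It therefore suffices to identify $\gr_{\ast}A_{\bullet}$ with $H\pi_{\ast}A$ as an object of $\CAlg$. By the remark following the construction of the Whitehead tower, the $k$-th graded piece of $A_{\bullet}$ is $\Sigma^{k}H\pi_{k}A$, so after applying the colimit-preserving symmetric monoidal functor $\bigoplus : \mathrm{Gr}(\Sp) \to \Sp$ one obtains the underlying spectrum $\bigoplus_{k \geq 0}\Sigma^{k}H\pi_{k}A$, which is precisely the generalized Eilenberg--Mac Lane spectrum $H\pi_{\ast}A$. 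The commutative algebra structure inherited from $\gr_{\ast}$ being a symmetric monoidal functor out of $\CAlgTow$ is, on homotopy groups, the graded multiplication on $\pi_{\ast}A$, and this pins down the equivalence $\gr_{\ast}A_{\bullet} \simeq H\pi_{\ast}A$ in $\CAlg$.

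Concatenating these equivalences then yields the claim. The one step that genuinely requires some thought is the identification $\gr_{\ast}A_{\bullet} \simeq H\pi_{\ast}A$ in $\CAlg$: the underlying spectrum-level identification is routine from the defining property of the Whitehead tower, but upgrading it to an equivalence of commutative ring spectra uses both the symmetric monoidality of $\gr_{\ast}$ (Proposition \ref{prop:grmonoidal}) and the fact that the multiplication on $H\pi_{\ast}A$ is itself determined by the graded ring structure on $\pi_{\ast}A$; all other steps are bookkeeping on top of Proposition \ref{prop:DayGraded}.
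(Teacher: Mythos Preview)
Your proposal is correct and matches the paper's approach: the paper presents this as an immediate consequence of Proposition \ref{prop:DayGraded} with no further argument, and your proof is precisely the natural unwinding of that claim, invoking the remark after the Whitehead tower construction to identify $\gr_{\ast}A_{\bullet}$ with $H\pi_{\ast}A$. The additional care you take in noting that the identification must hold in $\CAlg$ (via symmetric monoidality of $\gr_{\ast}$) is more detail than the paper supplies, but is in the same spirit.
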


\noindent This result, combined with several others, is used to construct the $\THH$-May spectral sequence sequence $E_{\ast,\ast}^{1}\cong \pi_{\ast}\THH(H\pi_{\ast}A) \Rightarrow \pi_{\ast}\THH(A)$; see \cite[3.4.8]{angelini-knoll-salch}.

\begin{prop}\emph{\cite[3.5.4]{angelini-knoll-salch}}\label{prop:THHMayConnectivity}
	If $A_{\bullet} \in \Alg^{\tow}_{\geq 0}$ with the property that $\pi_{m}A_{n} \cong 0$ for all $m<n$, then $\pi_{m}\Fil_{n}^{\May} \cong 0$ for all $m<n$ as well. 	
\end{prop}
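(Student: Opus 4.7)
The plan is to recognize that the hypothesis $\pi_m A_n \cong 0$ for $m < n$ is precisely the condition that $A_\bullet$ lies in the full subcategory $\msc{E} \subseteq \tow(\Sp)$ introduced in the proof that the Whitehead tower is lax symmetric monoidal, and to leverage the fact that $\msc{E}$ is closed under the Day convolution.

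First, I would commute $\ev_n$ past the colimit defining $\THHDay(A_\bullet)$. Since $\ev_n : \tow(\Sp) \to \Sp$ is restriction along $\{n\} \hookrightarrow \zz^{\op}_{\geq 0}$ and therefore admits both adjoints, it preserves all colimits. Hence
$$\Fil_n^{\May} = \ev_n \THHDay(A_\bullet) \simeq \underset{N(\Lambda^{\op})}{\varinjlim}\, \ev_n\bigl(\Day \circ (A_\bullet^{\otimes})_{\Act} \circ V^o\bigr),$$
where the $k$-th term of the diagram is $\ev_n(A_\bullet^{\Day(k+1)})$.

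Next, I would argue that each $A_\bullet^{\Day(k+1)}$ still belongs to $\msc{E}$. By induction on $k$, it suffices to know that $\msc{E}$ is stable under the binary Day convolution; this was established in Section~2.3 using the explicit formula
$$(X_\bullet \Day Y_\bullet)_n \simeq \underset{p + q \geq n}{\varinjlim} X_p \otimes Y_q,$$
together with the observations that the tensor product of a $p$-connective and a $q$-connective spectrum is $(p+q)$-connective, that $p + q \geq n$, and that $\Sp_{\geq n}$ is closed under colimits in $\Sp$. Consequently $\ev_n(A_\bullet^{\Day(k+1)}) \in \Sp_{\geq n}$ for every $k \geq 0$.

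Finally, since $\Sp_{\geq n}$ is closed under all small colimits, the geometric realization $\Fil_n^{\May}$ is itself $n$-connective, giving $\pi_m \Fil_n^{\May} \cong 0$ for $m < n$. There is no serious obstacle here beyond recognizing the connection to $\msc{E}$; the connectivity bookkeeping for the iterated Day convolution is formal once the binary case is in hand.
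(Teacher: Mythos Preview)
Your argument is correct and follows essentially the same strategy as the paper: commute $\ev_n$ past the geometric realization, show that each simplicial level $\ev_n(A_\bullet^{\Day(k+1)})$ is $n$-connective using the connectivity of tensor products and closure of $\Sp_{\geq n}$ under colimits, and conclude. The only cosmetic difference is that the paper writes out the $k$-fold Day convolution directly as $\varinjlim_{i_1+\cdots+i_k\geq n} A_{i_1}\otimes\cdots\otimes A_{i_k}$ and argues connectivity in one step, whereas you phrase the same computation as an induction on $k$ via closure of $\msc{E}$ under the binary Day product.
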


\begin{proof}
	First, recall that if $E$ and $F$ are $n$ and $m$-connective spectra, respectively, then $E \otimes F$ is $(n+m)$-connective. Next, note that $\Fil_{n}^{\May}$ is the geometric realization of a simplicial object whose $k$-th term is given by 
	$$ \ev_{n}(A_{\bullet}^{\Day k}) = \underset{i_{1} + \cdots + i_{k} \geq n}{\varinjlim} A_{i_{1}} \otimes \cdots \otimes A_{i_{k}}.$$ 
	\par \noindent As each $A_{i_{m}}$ is $i_{m}$-connective, by our recollection, each term in the simplicial object is $n$-connective, and hence $\Fil_{n}^{\May}$ is $n$-connective.
\end{proof}

\begin{cor}\label{cor:THHMayHausdorff}
	For $A \in \Alg_{\geq 0}$, the May filtration has the property that
	$$\underset{n \geq 0}{\varprojlim}\left(\Fil_{n}^{\May}\right) \simeq 0;$$
	\noindent i.e. the filtration is Hausdorff. Additionally, the May filtration on the functor $\THH$ is also Hausdorff. 
\end{cor}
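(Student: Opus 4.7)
The plan is to deduce this almost immediately from Proposition \ref{prop:THHMayConnectivity}, since the Whitehead tower of any $A \in \Alg_{\geq 0}$ satisfies its hypotheses. Specifically, if $A_\bullet$ denotes the Whitehead tower of $A$, then $A_n \simeq \tau_{\geq n} A$ is $n$-connective by construction, so $\pi_m A_n \cong 0$ for all $m < n$. Applying Proposition \ref{prop:THHMayConnectivity} to $A_\bullet$ then gives $\pi_m \Fil_n^{\May} \cong 0$ for all $m < n$, i.e.\ $\Fil_n^{\May}$ is $n$-connective.

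Next, I would fix $m \in \zz$ and invoke the Milnor short exact sequence
\[ 0 \to \underset{n}{\lim}^{1}\, \pi_{m+1} \Fil_{n}^{\May} \to \pi_m \left(\underset{n \geq 0}{\varprojlim} \Fil_{n}^{\May}\right) \to \underset{n}{\lim}\, \pi_{m} \Fil_{n}^{\May} \to 0. \]
By the connectivity estimate above, both towers $n \mapsto \pi_{m} \Fil_n^{\May}$ and $n \mapsto \pi_{m+1} \Fil_n^{\May}$ are eventually zero (namely, once $n > m+1$), so both their limits and $\lim^1$-terms vanish. Hence $\pi_m \varprojlim_n \Fil_n^{\May} \cong 0$ for every $m \in \zz$, and the limit spectrum is contractible. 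This establishes the first assertion.

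For the functorial statement, the May filtration of $\THH$ is a tower in the presentable stable \category $\Fun(\Alg_{\geq 0}, \Sp^{B\mathbb{T}})$, where limits are computed pointwise. Since $\varprojlim_n \Fil_n^{\May}(A) \simeq 0$ for every $A \in \Alg_{\geq 0}$ by the preceding paragraph, the limit of the functorial tower is the zero functor, so the May filtration on $\THH$ is Hausdorff as well. I do not anticipate a serious obstacle here: once the connectivity bound of Proposition \ref{prop:THHMayConnectivity} is in hand, the Hausdorff property is a direct consequence via the Milnor sequence, and passage from objects to functors is purely formal.
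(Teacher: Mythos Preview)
Your proposal is correct and follows essentially the same approach as the paper: both deduce $n$-connectivity of $\Fil_n^{\May}$ from Proposition~\ref{prop:THHMayConnectivity} applied to the Whitehead tower, and then conclude that the inverse limit vanishes because connectivity grows without bound. Your use of the Milnor sequence simply makes explicit what the paper summarizes as ``the connectivity grows linearly in the tower,'' and your treatment of the functorial statement via pointwise limits is the intended (if unstated) argument.
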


\begin{proof}
	By Proposition \ref{prop:THHMayConnectivity} and a Bousfield-Kan spectral sequence calculation, $\Fil^{\May}_{n}$ is $n$-connective. Since the connectivity grows linearly in the tower, we conclude $\underset{n \geq 0}{\varprojlim}\left(\Fil_{n}^{\May}\right) \simeq 0$. 	
\end{proof}

\begin{cor}\label{cor:THHMayConvergence}
	Let $A \in \Alg_{\geq 0}$. Then the tower 	
	$$ \cdots \rightarrow \THH(A)/\Fil^{\May}_{i} \rightarrow \cdots \rightarrow \THH(A)/\Fil^{\May}_{2} \rightarrow \THH(A)/\Fil^{\May}_{1}$$
	
	\noindent converges to $\THH(A)$.
\end{cor}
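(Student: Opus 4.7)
The plan is to show directly that the natural map $\THH(A) \to \varprojlim_{i} \THH(A)/\Fil^{\May}_{i}$ is an equivalence in $\Sp$ (or, more precisely, in $\Sp^{B\mathbb{T}}$). Everything we need has already been set up: the May filtration gives, for each $i \geq 1$, a defining cofiber sequence
\[
\Fil^{\May}_{i} \longrightarrow \THH(A) \longrightarrow \THH(A)/\Fil^{\May}_{i},
\]
and these assemble into a cofiber sequence of towers (indexed by $i \geq 1$), where the middle tower is the constant tower on $\THH(A)$.

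First, I would invoke stability of $\Sp$ (and of $\Sp^{B\mathbb{T}}$, since $B\mathbb{T}$-shaped limits commute with limits indexed by $\mathbb{Z}^{\op}_{\geq 0}$) to observe that limits preserve cofiber sequences. Applying $\varprojlim_{i}$ to the tower of cofiber sequences above therefore yields a fiber sequence
\[
\varprojlim_{i} \Fil^{\May}_{i} \longrightarrow \THH(A) \longrightarrow \varprojlim_{i} \THH(A)/\Fil^{\May}_{i},
\]
where the middle term is unchanged because the constant tower has limit $\THH(A)$.

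Second, I would apply Corollary \ref{cor:THHMayHausdorff}, which asserts that $\varprojlim_{i} \Fil^{\May}_{i} \simeq 0$ (the Hausdorff property of the May filtration). Substituting this into the fiber sequence forces the natural map
\[
\THH(A) \longrightarrow \varprojlim_{i} \THH(A)/\Fil^{\May}_{i}
\]
to be an equivalence, which is exactly the statement that the tower of quotients converges to $\THH(A)$.

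There is essentially no obstacle here; the corollary is a formal consequence of the fiber sequence together with the Hausdorff property, and the real content has already been absorbed into Proposition \ref{prop:THHMayConnectivity} and Corollary \ref{cor:THHMayHausdorff}, where the linear growth of connectivity is used to kill $\varprojlim_{i} \Fil^{\May}_{i}$ via a Milnor/Bousfield–Kan argument.
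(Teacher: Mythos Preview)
Your proposal is correct and matches the paper's proof essentially verbatim: the paper also takes the fiber sequences $\Fil^{\May}_{i} \to \THH(A) \to \THH(A)/\Fil^{\May}_{i}$, passes to the limit, and invokes Corollary~\ref{cor:THHMayHausdorff} to conclude. You have simply spelled out the stability and limit-commutation details that the paper leaves implicit.
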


\begin{proof}
	For $n \geq 1$ there are fiber sequences $\Fil^{\May}_{i} \rightarrow \THH(A) \rightarrow \THH(A)/\Fil^{\May}_{i}$. Passing to the limit and applying Corollary \ref{cor:THHMayHausdorff}, we conclude.	
\end{proof}

\begin{rmk}
	As limits are calculated pointwise in functor categories, the lemma above implies that the tower of functors $\{\THH/\Fil^{\May}_{i}\}_{i \geq 1}$ converges to $\THH$.
\end{rmk}

\begin{rmk}
	While working with $\tow(\Sp)$ suffices for our purposes, by Remark \ref{rmk:GeneralTHH}, we can define $\THH^{\Fil}(A_{\bullet})$ for $A_{\bullet} \in \Alg^{\Fil}$, satisfying many of the same properties. For example, analogues of Propositions \ref{prop:THHMayEv0} and \ref{prop:DayGraded} hold for $\THH^{\Fil}$; instead of using the symmetric monoidality of $\ev_{0}$, we use the symmetric monoidality of $\varinjlim : \Fil(\Sp) \rightarrow \Sp$, and instead of the associated graded functor $\gr_{\ast} : \tow(\Sp) \rightarrow \Sp$, we use the functor $\gr_{\ast}^{\Fil}$ from Section \ref{section:gr}. 
\end{rmk}

\subsection{The May filtration on $\THH_{hC_{p}}$} 
In order to prove our main theorem, we need to establish variants of Proposition \ref{prop:THHMayEv0} and Corollaries \ref{cor:THHMayGraded}, \ref{cor:THHMayHausdorff}, and \ref{cor:THHMayConvergence} for $\THH(-)_{hC_{p}}$. We proceed by applying $C_{p}$-homotopy fixed points to the tower $\THHDay(A)$, and mimicking the proofs in the previous subsection.

\begin{prop}
	Let $A \in \Alg_{\geq 0}$, and let $A_{\bullet}$ denote the Whitehead tower of $A$. There is an equivalence of spectra with $\mathbb{T}$-action, $\ev_{0}\left(\THHDay(A_{\bullet})_{hC_{p}}\right) \simeq \THH(A)_{hC_{p}}$. In the case where $A \in \CAlg_{\geq 0}$, this is an equivalence in $\CAlg^{B\mathbb{T}}$.
\end{prop}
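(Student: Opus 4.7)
The plan is to apply the $C_p$-homotopy orbits functor to the equivalence from Proposition \ref{prop:THHMayEv0} and exchange $\ev_0$ with $(-)_{hC_p}$. The key observation is that $\ev_0$ commutes with the colimit defining $C_p$-homotopy orbits, both in the spectrum and commutative algebra settings.

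In the spectra setting, $\ev_0 : \tow(\Sp) \to \Sp$ preserves all small colimits, since colimits in functor categories are computed pointwise. This preservation extends equivariantly to $\ev_0 : \tow(\Sp)^{B\mathbb{T}} \to \Sp^{B\mathbb{T}}$, so it commutes with the colimit along $BC_p \subseteq B\mathbb{T}$ defining $C_p$-homotopy orbits. Therefore
\[ \ev_0\bigl(\THHDay(A_\bullet)_{hC_p}\bigr) \simeq \bigl(\ev_0 \THHDay(A_\bullet)\bigr)_{hC_p} \simeq \THH(A)_{hC_p}, \]
where the first equivalence is colimit-preservation of $\ev_0$ and the second is Proposition \ref{prop:THHMayEv0}. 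The residual $\mathbb{T}/C_p \cong \mathbb{T}$-action is transported along throughout.

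For the commutative case, the induced functor $\ev_0 : \CAlg^{\tow} \to \CAlg$ is guaranteed only to preserve sifted colimits \cite[3.2.3.2]{HA}, so we must present $(-)_{hC_p}$ as a sifted colimit. This is achieved by the standard two-sided bar construction
\[ X_{hC_p} \simeq \bigl| B_\bullet(\ast, C_p, X) \bigr|, \]
realizing $X_{hC_p}$ as a geometric realization, which is a sifted colimit computed uniformly in both $\Sp$ and $\CAlg$. Applying $\ev_0$ levelwise to this simplicial resolution and invoking the commutative algebra statement of Proposition \ref{prop:THHMayEv0} completes the argument in $\CAlg^{B\mathbb{T}}$.

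The main technical obstacle is bookkeeping the $\mathbb{T}$-equivariance through the commutation of $\ev_0$ with $(-)_{hC_p}$, particularly at the commutative algebra level. Since $\THHDay(A_\bullet)$ inherits its $\mathbb{T}$-action from the realization of a cyclic object and $\ev_0$ is defined entirely pointwise, the equivariance is automatic; the only nontrivial step is identifying the bar presentation of $(-)_{hC_p}$ in $(\CAlg^{\tow})^{B\mathbb{T}}$, which amounts to unpacking the standard tensoring of the space $BC_p$ with $E_\infty$-algebras and does not introduce any new technical issues.
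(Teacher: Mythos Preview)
Your proposal is correct and follows essentially the same approach as the paper: commute $\ev_0$ past the colimit defining $(-)_{hC_p}$ and then invoke Proposition~\ref{prop:THHMayEv0}. Your treatment of the commutative case is in fact more explicit than the paper's, which simply appeals to sifted-colimit preservation of $\ev_0$ on $\CAlg^{\tow}$ without spelling out the bar-construction presentation of homotopy orbits.
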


\begin{proof}
	Since the functor $\ev_{0}$ is colimit-preserving, we have:
	\begin{align*}
	\ev_{0}\left(\THHDay(A_{\bullet})_{hC_{p}}\right) & \simeq \left(\ev_{0}\THHDay(A_{\bullet})\right)_{hC_{p}} \\
												     & \simeq \THH(A)_{hC_{p}}.
	\end{align*}
	\noindent As in Proposition \ref{prop:THHMayEv0}, similar considerations yield the commutative variant.	
\end{proof}

\begin{defn}
	Let $A \in \Alg_{\geq 0}$ and let $A_{\bullet}$ denote the Whitehead tower of $A$. We define the \emph{May filtration} on $\THH(A)_{hC_{p}}$ as $\THHDay(A_{\bullet})_{hC_{p}}$. We will denote the $i$-th piece of this filtration by $\Fil^{\May}_{i}(p)$, and the $i$-th graded piece by $\gr^{\May}_{i}\THHDay(A_{\bullet})_{hC_{p}}$.
\end{defn}

\begin{prop}
	Let $A \in \Alg_{\geq 0}$, and let $A_{\bullet}$ denote the Whitehead tower of $A$. There is an equivalence of spectra with $\mathbb{T}$-action, $\gr_{\ast}^{\May}\left(\THHDay(A_{\bullet})_{hC_{p}}\right) \simeq \THH(H\pi_{\ast}A)_{hC_{p}}$. In the case where $A \in \CAlg_{\geq 0}$, this is an equivalence in $\CAlg^{B\mathbb{T}}$.	
\end{prop}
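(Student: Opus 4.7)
The strategy is to mirror the short argument used just above for $(-)_{hC_{p}}$ at level zero, and the argument for Proposition \ref{prop:DayGraded}, exploiting that $(-)_{hC_{p}}$ is a colimit (indexed by $BC_{p}$) and that the associated graded functor preserves colimits. There is no new $\infty$-categorical input required; the work has already been packaged into Propositions \ref{prop:grmonoidal} and \ref{prop:DayGraded}.

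The main computation in $\Sp^{B\mathbb{T}}$ is immediate. By Proposition \ref{prop:grmonoidal}, the functor $\gr_{\ast} : \tow(\Sp) \rightarrow \Sp$ preserves all small colimits, and in particular commutes with the homotopy orbit $(-)_{hC_{p}}$, viewed as a colimit along $BC_{p}$. Therefore
$$\gr_{\ast}^{\May}\left(\THHDay(A_{\bullet})_{hC_{p}}\right) \simeq \left(\gr_{\ast}^{\May}\THHDay(A_{\bullet})\right)_{hC_{p}}.$$
Applying Corollary \ref{cor:THHMayGraded} (which specializes Proposition \ref{prop:DayGraded} to the Whitehead tower of $A$) identifies the inner term as $\THH(H\pi_{\ast}A)$ in $\Sp^{B\mathbb{T}}$, yielding
$$\gr_{\ast}^{\May}\left(\THHDay(A_{\bullet})_{hC_{p}}\right) \simeq \THH(H\pi_{\ast}A)_{hC_{p}},$$
where the residual $\mathbb{T}$-action on both sides comes from the identification $\mathbb{T}/C_{p} \simeq \mathbb{T}$.

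For the case where $A \in \CAlg_{\geq 0}$, the plan is to promote the above to an equivalence in $\CAlg^{B\mathbb{T}}$ by constructing the comparison map natively in commutative algebras. Since $\gr_{\ast}$ is symmetric monoidal by Proposition \ref{prop:grmonoidal}, it induces a functor $\CAlgTow \rightarrow \CAlg$ preserving sifted colimits by Corollary \ref{cor:Alggrmonoidal}; together with the symmetric monoidality of $\THHDay$, this gives the natural comparison map in $\CAlg^{B\mathbb{T}}$. The preceding paragraph shows that the underlying morphism of spectra is an equivalence, and equivalences in $\CAlg$ are detected on underlying spectra.

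I expect the only genuine subtlety to be precisely this commutative upgrade. In the spectra case, the result is essentially a formal consequence of Proposition \ref{prop:grmonoidal} and Corollary \ref{cor:THHMayGraded}. The commutative version is more delicate because $(-)_{hC_{p}}$ is not a sifted colimit, so Corollary \ref{cor:Alggrmonoidal} does not directly allow one to commute $\gr_{\ast}$ past the homotopy orbit inside $\CAlg$; one has to argue through the comparison map in $\CAlg^{B\mathbb{T}}$ and reduce to the underlying-spectrum computation. This is exactly the "similar considerations" invoked in the proof of the preceding proposition, and requires no new ingredients.
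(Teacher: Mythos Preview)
Your proposal is correct and follows the paper's own proof essentially verbatim: use that $\gr_{\ast}$ preserves colimits to commute it past $(-)_{hC_{p}}$, then invoke Corollary~\ref{cor:THHMayGraded}, with the commutative upgrade handled by ``similar considerations.'' Your discussion of the commutative case is in fact more explicit than the paper's one-line remark, but the argument is the same.
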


\begin{proof}
	Since the functor $\gr_{\ast}$ is colimit-preserving, we have:
	\begin{align*}
	\gr_{\ast}\left(\THHDay(A_{\bullet})_{hC_{p}}\right) & \simeq \left(\gr_{\ast}\THHDay(A_{\bullet})\right)_{hC_{p}} \\
	& \simeq \THH(H\pi_{\ast}A)_{hC_{p}}.
	\end{align*}
	\noindent As in Proposition \ref{prop:DayGraded}, similar considerations yield the commutative variant.		
\end{proof}

\begin{prop}\label{prop:THHDayCpConnectivity}
	If $A_{\bullet} \in \CAlg_{\geq 0}^{\tow}$ with the property that $\pi_{m}A_{n} \cong 0$ for all $m < n$, then $\pi_{m}\Fil^{\May}_{n}(p) \cong 0$, for all $m < n$ as well. 	
\end{prop}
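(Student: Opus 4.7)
The plan is to reduce directly to Proposition \ref{prop:THHMayConnectivity} by commuting the evaluation functor past homotopy orbits, and then observing that homotopy orbits preserve connectivity.

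First, I would unwind the definitions: by construction $\Fil_n^{\May}(p) = \ev_n\left(\THHDay(A_\bullet)_{hC_p}\right)$. The restriction functor $\ev_n : \tow(\Sp) \to \Sp$ admits both a left adjoint (namely $L_n$ from the lemma above, or more generally left Kan extension) and a right adjoint (right Kan extension), so in particular it preserves all small colimits. Since $(-)_{hC_p} = \colim_{BC_p}(-)$ is a colimit construction, we obtain a canonical equivalence
\[
\Fil_n^{\May}(p) \;\simeq\; \left(\ev_n \THHDay(A_\bullet)\right)_{hC_p} \;=\; \left(\Fil_n^{\May}\right)_{hC_p}.
\]

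Next, by Proposition \ref{prop:THHMayConnectivity} applied to $A_\bullet$, the spectrum $\Fil_n^{\May}$ is $n$-connective, i.e.\ $\pi_m \Fil_n^{\May} \cong 0$ for $m < n$. The subcategory $\Sp_{\geq n} \subseteq \Sp$ is closed under small colimits, since it is the essential image of a left adjoint (the inclusion $i_n$); equivalently, this follows from the standard fact that the connective cover functor $\tau_{\geq n}$ preserves colimits, or directly from the Bousfield--Kan spectral sequence computing $\pi_\ast$ of a homotopy colimit. In particular, applying $(-)_{hC_p}$ to the $n$-connective spectrum $\Fil_n^{\May}$ yields an $n$-connective spectrum.

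Combining these two observations gives $\pi_m \Fil_n^{\May}(p) \cong 0$ for all $m < n$, as desired. There is no real obstacle here: the argument is purely formal, leveraging that $\ev_n$ is both a left and right adjoint (so it commutes with the colimit defining homotopy orbits) together with the stability of $\Sp_{\geq n}$ under colimits. The same template will presumably yield the analogues of Corollaries \ref{cor:THHMayHausdorff} and \ref{cor:THHMayConvergence} for the $hC_p$-variant, via a Bousfield--Kan spectral sequence argument identical to the one used there.
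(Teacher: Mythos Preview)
Your argument is correct and matches the paper's proof exactly: reduce to Proposition~\ref{prop:THHMayConnectivity} and use that homotopy orbits preserve connectivity (the paper states this in a single sentence, whereas you spell out the intermediate step that $\ev_n$ commutes with $(-)_{hC_p}$). One small correction to a parenthetical: $\tau_{\geq n}$ is a \emph{right} adjoint and does not preserve colimits in general; the closure of $\Sp_{\geq n}$ under colimits follows instead from the inclusion $i_n$ being a left adjoint, as you correctly note first.
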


\begin{proof}
	It is standard that homotopy orbits preserve connectivity, so we may conclude by Proposition \ref{prop:THHMayConnectivity}.
\end{proof}

\begin{cor}
	For $A \in \CAlg_{\geq 0}$, the May filtration of $\THH(A)_{hC_{p}}$ is Hausdorff. 	
\end{cor}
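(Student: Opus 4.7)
The plan is to mimic the proof of Corollary \ref{cor:THHMayHausdorff} almost verbatim, substituting the connectivity input from Proposition \ref{prop:THHMayConnectivity} with its $C_{p}$-homotopy-orbit analogue, Proposition \ref{prop:THHDayCpConnectivity}. The Whitehead tower $A_{\bullet}$ of any $A \in \CAlg_{\geq 0}$ has $\gr^{\May}_{k} A_{\bullet} \simeq \Sigma^{k} H\pi_{k} A$ and in particular satisfies $\pi_{m} A_{n} \cong 0$ for $m < n$. Thus Proposition \ref{prop:THHDayCpConnectivity} applies and delivers the linear-in-$n$ connectivity estimate $\pi_{m} \Fil^{\May}_{n}(p) \cong 0$ for all $m < n$.

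With this connectivity bound secured, the Hausdorff conclusion $\varprojlim_{n} \Fil^{\May}_{n}(p) \simeq 0$ is then a formal consequence, by the very same Bousfield--Kan / Milnor argument used in Corollary \ref{cor:THHMayHausdorff}. Explicitly, for each fixed integer $k$, the groups $\pi_{k}\Fil^{\May}_{n}(p)$ and $\pi_{k+1}\Fil^{\May}_{n}(p)$ both vanish once $n > k+1$, so the towers $\{\pi_{k}\Fil^{\May}_{n}(p)\}$ and $\{\pi_{k+1}\Fil^{\May}_{n}(p)\}$ are pro-zero. The Milnor short exact sequence
\[ 0 \to {\varprojlim_{n}}^{1} \pi_{k+1}\Fil^{\May}_{n}(p) \to \pi_{k} \varprojlim_{n} \Fil^{\May}_{n}(p) \to \varprojlim_{n} \pi_{k}\Fil^{\May}_{n}(p) \to 0 \]
then has trivial flanking terms, so $\pi_{k} \varprojlim_{n}\Fil^{\May}_{n}(p) = 0$ for every $k$, whence the inverse limit itself is contractible.

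There is no real obstacle here: the substantive content is packaged into Proposition \ref{prop:THHDayCpConnectivity} (which in turn reduces to Proposition \ref{prop:THHMayConnectivity} together with the fact that homotopy orbits, being a colimit, preserve connectivity). The corollary is then purely a convergence-of-a-tower statement, handled by the standard linear-connectivity argument.
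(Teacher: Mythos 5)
Your proposal is correct and follows the paper's own argument essentially verbatim: invoke Proposition \ref{prop:THHDayCpConnectivity} for the linear connectivity bound on $\Fil^{\May}_{n}(p)$ and conclude that the inverse limit vanishes. The only difference is that you spell out the Milnor $\varprojlim^{1}$ sequence, which the paper leaves implicit.
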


\begin{proof}
	By Proposition \ref{prop:THHDayCpConnectivity}, the connectivity of the tower grows linearly, so
	$$\underset{n \geq 0}{\varprojlim}\Fil^{\May}_{n}(p) \simeq 0.$$
\end{proof}

\section{Proof of Theorem \ref{thm:maintheorem}}

In this section we prove our main result. Using Proposition \ref{prop:SpecialCase}, we deduce the graded pieces of the May filtration are fpqc sheaves and conclude the result for $\THH$ by induction up the tower in Corollary \ref{cor:THHMayConvergence}. By similar methods, we prove the claim for $\THH_{hC_{p}}$, and  use the structure of $\CycSp$ to prove it for $\TC$ as well.

\begin{lem}\label{lem:grCech}
	Let $f : A \rightarrow B$ be a faithfully flat morphism in $\CAlg_{\geq 0}$. Let $f_{\bullet} : A_{\bullet} \rightarrow B_{\bullet}$ denote the induced map between their Whitehead towers and let $f_{\ast} : H\pi_{\ast}A \rightarrow H\pi_{\ast}B$ denote $\gr_{\ast}(f_{\bullet})$. There is a canonical equivalence of augmented cosimplicial commutative ring spectra $\gr_{\ast}\left( C(f_{\bullet})_{+} \right) \simeq C(f_{\ast})_{+}$. 
\end{lem}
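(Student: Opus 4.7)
The strategy is to show that $\gr_\ast : \CAlgTow \to \CAlg$ commutes with the left Kan extension defining the Čech conerve. By definition, $C^\bullet(f_\bullet)_+$ is the left Kan extension of $f_\bullet : N(\Delta_+^{\leq 0}) \simeq \Delta^1 \to \CAlgTow$ along the inclusion $\iota : N(\Delta_+^{\leq 0}) \hookrightarrow N(\Delta_+)$, while $C^\bullet(f_\ast)_+$ is the corresponding left Kan extension of $f_\ast : \Delta^1 \to \CAlg$. Using the identification $\gr_\ast(A_\bullet) \simeq H\pi_\ast A$ (and analogously for $B_\bullet$) from the remark at the end of Section 2.4, the composition $\gr_\ast \circ f_\bullet$ is canonically equivalent to $f_\ast$. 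Consequently, once $\gr_\ast$ is shown to preserve the colimits computing $\iota_!$, the universal property of left Kan extensions will supply a canonical equivalence $\gr_\ast \circ \iota_!(f_\bullet) \simeq \iota_!(\gr_\ast \circ f_\bullet) = C^\bullet(f_\ast)_+$ of augmented cosimplicial objects in $\CAlg$.

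To carry this out, recall that the pointwise formula for $\iota_!$ evaluates at $[n] \in \Delta_+$ to the iterated pushout $B_\bullet \otimes_{A_\bullet} \cdots \otimes_{A_\bullet} B_\bullet$ (with $(n+1)$ factors of $B_\bullet$) computed in $\CAlgTow$. By Corollary \ref{cor:Alggrmonoidal}, $\gr_\ast$ is symmetric monoidal and preserves sifted colimits of commutative algebras. A symmetric monoidal functor automatically preserves absolute tensor products, i.e.\ finite coproducts in $\CAlg$; and since any relative tensor product $M \otimes_R N$ in commutative algebras is the geometric realization of the two-sided bar construction $|M \otimes R^{\otimes \bullet} \otimes N|$, which is a sifted colimit of absolute tensor products, the two preservation properties together imply that $\gr_\ast$ preserves each binary relative tensor product. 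An induction on $n$ then upgrades this to preservation of the $(n+1)$-fold relative tensor product occupying level $n$ of the Čech conerve.

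With the pointwise comparison in hand, the passage to a coherent equivalence of augmented cosimplicial objects is formal: by the universal property of $\iota_!$, any functor sending a diagram $D : N(\Delta_+^{\leq 0}) \to \CAlgTow$ to a diagram that restricts to $\gr_\ast \circ D$ on $N(\Delta_+^{\leq 0})$ and is a left Kan extension of it must agree with $\iota_!(\gr_\ast \circ D)$.

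The main obstacle is the verification that sifted-colimit preservation together with symmetric monoidality forces preservation of relative tensor products; this is the one genuinely nontrivial input, and everything else is then a consequence of the universal property of left Kan extensions.
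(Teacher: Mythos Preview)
Your proof is correct and takes a genuinely different route from the paper's. The paper constructs the comparison map $C^\bullet(f_\ast)_+ \to \gr_\ast C^\bullet(f_\bullet)_+$ from the universal property of left Kan extension and then checks it is an equivalence levelwise by identifying the target at level $n$ with $H\pi_\ast(B^{\otimes_A (n+1)})$ and invoking the K\"unneth spectral sequence together with the faithful flatness of $f$. Your argument bypasses this completely: from Corollary~\ref{cor:Alggrmonoidal} you know $\gr_\ast : \CAlgTow \to \CAlg$ is symmetric monoidal and preserves sifted colimits, and since pushouts of commutative algebras are geometric realizations of bar constructions built from absolute tensor products, $\gr_\ast$ preserves exactly the colimits that compute the \Cech conerve. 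This is strictly more general---the faithfully flat hypothesis is never used, so the statement in fact holds for an arbitrary morphism in $\CAlg_{\geq 0}$---and it avoids the implicit identification $\gr_\ast\big((B_\bullet)^{\Day_{A_\bullet}(n+1)}\big) \simeq H\pi_\ast\big(B^{\otimes_A (n+1)}\big)$ on which the paper's K\"unneth step rests. The paper's approach, by contrast, makes the link to classical flatness considerations explicit, which is thematically consonant with the surrounding descent arguments.
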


\begin{proof}
	As \Cech conerves are computed by left Kan extensions, there is a natural transformation $C^{\bullet}(f_{\ast})_{+} \rightarrow \gr_{\ast} C^{\bullet}(f_{\bullet})_{+}$ since $\gr_{\ast} C^{\bullet}(f_{\bullet})_{+}|_{\Delta^{\leq 0}_{+}} = f_{\ast}$. For $n > 0$, we have a canonical map
	$$C^{n}(f_{\ast})_{+} \simeq H\pi_{\ast}B \otimes_{H\pi_{\ast}A} \cdots \otimes_{H\pi_{\ast}A} H\pi_{\ast}B \rightarrow H\pi_{\ast}\left(B \otimes_{A} \cdots \otimes_{A} B\right) \simeq  \gr_{\ast}C^{n}(f)_{+}.$$
	\noindent Since $A \rightarrow B$ is faithfully flat, the usual K{\"u}nneth spectral sequence calculation shows that this map is an equivalence. Thus, the morphism of \Cech conerves is an equivalence. 	
\end{proof}

\begin{cor}
	The functors $\gr^{\May}_{\ast}\THHDay, \gr^{\May}_{\ast}\THHDay(-)_{hC_{p}} : \CAlg_{\geq 0} \rightarrow \Sp$ are sheaves for the fpqc topology. 
\end{cor}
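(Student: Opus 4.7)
The plan is to verify the sheaf condition on a single faithfully flat morphism $f \co A \to B$ in $\CAlg_{\geq 0}$, which amounts to showing that the natural map
\[ \gr^{\May}_{\ast}\THHDay(A) \to \varprojlim \gr^{\May}_{\ast}\THHDay\!\left(C^{\bullet}(f)\right) \]
is an equivalence in $\Sp$, where $C^{\bullet}(f)$ is the \Cech conerve of $f$ in $\CAlg_{\geq 0}$.

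The first step is to use Corollary \ref{cor:THHMayGraded}, which identifies $\gr^{\May}_{\ast}\THHDay(R) \simeq \THH(H\pi_{\ast}R)$ naturally in $R \in \CAlg_{\geq 0}$, to rewrite both sides as $\THH$ of generalized Eilenberg--Mac Lane spectra. After this rewriting, the map in question becomes
\[ \THH(H\pi_{\ast}A) \to \varprojlim \THH\!\left(H\pi_{\ast}\, C^{\bullet}(f)\right). \]
Next, I would apply Lemma \ref{lem:grCech} to identify $H\pi_{\ast}\,C^{\bullet}(f)$, i.e.\ the cosimplicial object $\gr_{\ast}C^{\bullet}(f_{\bullet})$, with the \Cech conerve $C^{\bullet}(f_{\ast})$ of the induced graded map $f_{\ast}\co H\pi_{\ast}A \to H\pi_{\ast}B$.

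At this point the problem has been reduced exactly to the hypothesis of Proposition \ref{prop:SpecialCase}, provided $f_{\ast}$ is itself faithfully flat in the sense of the paper's definition. But this is immediate: $\pi_{0}(f_{\ast}) = \pi_{0}f$ is faithfully flat by hypothesis on $f$, and the required isomorphism $\pi_{0}B \otimes_{\pi_{0}A}\pi_{\ast}(H\pi_{\ast}A) \to \pi_{\ast}(H\pi_{\ast}B)$ is just the flatness witness for $f$ itself. Proposition \ref{prop:SpecialCase} then delivers the desired equivalence, settling the claim for $\gr^{\May}_{\ast}\THHDay$. For the $(-)_{hC_{p}}$ variant, exactly the same chain of reductions applies, now invoking the companion statement to Proposition \ref{prop:SpecialCase} (stated immediately after it for $\THH(-)_{hC_{p}}$) at the final step.

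No step here is technically difficult; the main thing to be careful about is bookkeeping, namely checking that the three identifications (the graded piece computation, passage through $\gr_{\ast}$ of the \Cech conerve, and the transfer of faithful flatness) are compatible as morphisms of augmented cosimplicial objects, and not just pointwise. This compatibility follows formally from the naturality of the Whitehead tower functor in $A$ and the fact that both $\THHDay$ and $\gr_{\ast}$ are functors, so the argument can be run entirely on the level of cosimplicial diagrams rather than on individual cosimplicial degrees.
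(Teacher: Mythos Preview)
Your proof is correct and follows essentially the same route as the paper's: reduce via the identification $\gr^{\May}_{\ast}\THHDay(R)\simeq \THH(H\pi_{\ast}R)$, use Lemma~\ref{lem:grCech} to identify the resulting cosimplicial object with the \Cech conerve of $f_{\ast}$, and then invoke Proposition~\ref{prop:SpecialCase} (and its $(-)_{hC_{p}}$ companion). The only cosmetic difference is that you explicitly verify faithful flatness of $f_{\ast}$, which the paper leaves implicit in the hypotheses of Proposition~\ref{prop:SpecialCase}.
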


\begin{proof}
	By Proposition \ref{prop:DayGraded}, there is an equivalence of functors $\gr^{\May}_{\ast}\THH \simeq \THH(\gr_{\ast}(-))$. Now let $f: A \rightarrow B$ be a faithfully flat morphism in $\CAlg_{\geq 0}$ and observe that by Lemma \ref{lem:grCech}, $C^{\bullet}(f_{\ast}) \simeq \gr_{\ast}C^{\bullet}(f)$. Thus, Proposition \ref{prop:SpecialCase} applied to the commutative diagram
	\begin{center}
	\begin{tikzcd}
	\gr_{\ast}^{\May}\THH(A) \arrow[r] \arrow[d] & \underset{\Delta}{\varprojlim}\gr^{\May}_{\ast} \THH(C^{\bullet}(f)) \arrow[d] \\
	\THH(H\pi_{\ast}A) \arrow[r] & \underset{\Delta}{\varprojlim} \THH(C^{\bullet}(f_{\ast}))
	\end{tikzcd}		
	\end{center}

	\noindent proves the claim for $\gr^{\May}_{\ast}\THHDay$. An identical proof works for $\gr^{\May}_{\ast}\THHDay(-)_{hC_{p}}$ as well. 
\end{proof}

\begin{prop}\label{prop:grMaySheaves}
	For all $i \geq 0$, the functors $\gr^{\May}_{i} \THH(-)$ are fpqc sheaves.	
\end{prop}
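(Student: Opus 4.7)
The plan is to derive this from the preceding corollary, which establishes the fpqc sheaf property for the total associated graded $\gr_{\ast}^{\May}\THH = \bigoplus_{i} \gr_{i}^{\May}\THH$, by extracting each individual graded piece. The cleanest route is to lift $\gr^{\May}\THH$ to a functor $\CAlg_{\geq 0} \to \text{Gr}(\Sp)$ whose $i$-th component is $\gr_{i}^{\May}\THH$; such a lift exists because the associated graded factors through $\text{Gr}(\Sp)$ by the discussion preceding Proposition \ref{prop:grmonoidal}. Since $\text{Gr}(\Sp) \simeq \prod_{i} \Sp$ has limits computed componentwise and the evaluation functors $\ev_{i}: \text{Gr}(\Sp) \to \Sp$ preserve limits, it would then suffice to show that this graded-spectra-valued refinement is itself an fpqc sheaf.

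For this, I would repeat the argument of the preceding corollary but in $\text{Gr}(\Sp)$. By Proposition \ref{prop:DayGraded}, $\gr^{\May}\THH(A) \simeq \THH(H\pi_{\ast}A)$ where $\THH$ is now computed in graded spectra, and Lemma \ref{lem:grCech} identifies $\gr\, C^{\bullet}(f) \simeq C^{\bullet}(\gr f)$ in $\text{Gr}(\Sp)$. A graded version of Proposition \ref{prop:SpecialCase} would follow since its inputs (Theorems \ref{thm:BMS2} and \ref{thm:dundas-rognes}) apply grade-by-grade: relative tensor products of graded algebras retain the grading, as does $\THH$ of a graded algebra, so the limit diagrams of the previous proof refine to limit diagrams in $\text{Gr}(\Sp)$.

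A more hands-on alternative uses the connectivity estimate from Proposition \ref{prop:THHMayConnectivity}. Since $\gr_{i}^{\May}\THH(R)$ is $i$-connective for $R \in \CAlg_{\geq 0}$, only finitely many graded pieces contribute to any fixed homotopy group, so there is a natural equivalence $\bigoplus_{i} \gr_{i}^{\May}\THH(R) \simeq \prod_{i} \gr_{i}^{\May}\THH(R)$ in $\Sp$. Applying this to each term of the \Cech conerve of a faithfully flat $f: A \to B$ and interchanging the resulting product with $\varprojlim_{\Delta}$, the sheaf property for $\gr_{\ast}^{\May}\THH$ reduces to a product of grade-wise sheaf statements, yielding the desired conclusion once one knows the decomposition is natural with respect to the grading.

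The main obstacle is ensuring that all constructions coherently preserve the grading; in particular, that the sheaf equivalence of the preceding corollary arises from a map in $\text{Gr}(\Sp)$ rather than merely at the level of the underlying summed spectrum. Given the symmetric monoidal framework of Section 2 and the construction of $\THHDay$ via the symmetric monoidal envelope in Section 4, this should be routine, but care is needed to track the grading through the monoidal envelope and cyclic bar constructions when passing between $\tow(\Sp)$ and $\text{Gr}(\Sp)$.
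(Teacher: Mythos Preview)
Your approaches are correct in spirit but substantially more elaborate than what the paper does. The paper simply observes that since $\gr^{\May}_{\ast}\THH \simeq \bigoplus_{i \geq 0} \gr^{\May}_{i}\THH$, each $\gr^{\May}_{i}\THH$ is a \emph{retract} of $\gr^{\May}_{\ast}\THH$ as a functor to $\Sp$ (via the evident inclusion and projection of the $i$-th summand). Since retracts of fpqc sheaves are fpqc sheaves, the result is immediate from the preceding corollary. There is no need to lift to $\text{Gr}(\Sp)$, repeat the descent argument grade-by-grade, or invoke connectivity to swap sums for products.

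Your first route would work, but the coherence concern you raise at the end is genuine and would require some bookkeeping; the retract argument sidesteps it entirely because it operates purely at the level of $\Sp$-valued presheaves. Your second route also works and is closer in spirit to the paper's argument (both ultimately exploit the splitting of the total associated graded), but again the retract observation is the cleanest way to extract a single summand without worrying about whether the comparison map respects the decomposition.
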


\begin{proof}	
	By definition, $\gr^{\May}_{\ast}\THH \simeq \bigoplus_{i \geq 0} \gr^{\May}_{i} \THH$, so that $\gr_{i}^{\May}\THH$ is a retract of $\gr^{\May}_{\ast}\THH$. This implies that $\gr_{i}^{\May} \THH$ is an fpqc sheaf for all $i \geq 0$. 
\end{proof}

\begin{cor}
	For all $i \geq 1$, the functors $\THH/\Fil_{i}^{\May} : \CAlg_{\geq 0} \rightarrow \Sp$ are fpqc sheaves.	
\end{cor}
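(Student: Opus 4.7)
The plan is to proceed by induction on $i \geq 1$, using the fact that the full subcategory of fpqc sheaves is closed under finite limits and suspensions inside $\Fun(\CAlg_{\geq 0}, \Sp)$, so a fiber sequence with two of three terms being sheaves forces the third to be a sheaf as well.

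For the base case $i = 1$, I would identify $\THH/\Fil_1^{\May}$ with $\gr_0^{\May}\THH$. By Proposition \ref{prop:THHMayEv0}, $\Fil_0^{\May} = \ev_0\THHDay(A_\bullet) \simeq \THH(A)$, so the natural map $\Fil_1^{\May} \to \THH$ factors through the equivalence $\Fil_0^{\May} \simeq \THH$, with cofiber $\gr_0^{\May}\THH$. Thus $\THH/\Fil_1^{\May}$ is an fpqc sheaf by Proposition \ref{prop:grMaySheaves}.

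For the inductive step, assume $\THH/\Fil_i^{\May}$ is an fpqc sheaf. Applying the octahedral axiom (or equivalently, taking cofibers in the commutative square determined by $\Fil_{i+1}^{\May} \to \Fil_i^{\May} \to \THH$) yields a fiber sequence
\[
\gr_i^{\May}\THH \longrightarrow \THH/\Fil_{i+1}^{\May} \longrightarrow \THH/\Fil_i^{\May}
\]
in $\Fun(\CAlg_{\geq 0}, \Sp)$. The left term is an fpqc sheaf by Proposition \ref{prop:grMaySheaves}, and the right term is an fpqc sheaf by the inductive hypothesis. Since the full subcategory of $\Sp$-valued fpqc sheaves on $\CAlg_{\geq 0}$ is a stable subcategory (closed under limits and shifts in the presheaf \category), the middle term $\THH/\Fil_{i+1}^{\May}$ is also an fpqc sheaf, completing the induction.

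There is no real obstacle here beyond the formal closure properties of sheaves under finite limits in a stable setting; all the substantive content has already been isolated in Proposition \ref{prop:grMaySheaves}, and the present corollary is a straightforward bootstrap up the tower of cofibers.
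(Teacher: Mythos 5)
Your proof is correct and follows the same route as the paper: induction on $i$ using the fiber sequences $\gr_{i}^{\May}\THH \rightarrow \THH/\Fil^{\May}_{i+1} \rightarrow \THH/\Fil^{\May}_{i}$ together with Proposition \ref{prop:grMaySheaves} and the closure of $\Sp$-valued sheaves under limits. The paper states this more tersely, but your explicit base case (identifying $\THH/\Fil_{1}^{\May}$ with $\gr_{0}^{\May}\THH$ via $\Fil_{0}^{\May} \simeq \THH$) is exactly what its induction implicitly uses.
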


\begin{proof}
	Induct on $i$ using Proposition \ref{prop:grMaySheaves} and the fiber sequences 
	$$\gr_{i}^{\May}\THH \rightarrow \THH/\Fil^{\May}_{i+1} \rightarrow \THH/\Fil^{\May}_{i}.$$
\end{proof}

\begin{thm}
	The functors $\THH, \THH_{hC_{p}} : \CAlg_{\geq 0} \rightarrow \Sp$ are sheaves for the fpqc topology.	
\end{thm}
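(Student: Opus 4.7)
The plan is to bootstrap from the preceding corollary (that each quotient $\THH/\Fil^{\May}_{i}$ is an fpqc sheaf) by taking the limit up the May tower. Concretely, I would proceed as follows.

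First, by Corollary \ref{cor:THHMayConvergence}, for every $A \in \CAlg_{\geq 0}$ we have a canonical equivalence $\THH(A) \simeq \varprojlim_{i} \THH(A)/\Fil^{\May}_{i}$, and this equivalence is natural in $A$; thus $\THH \simeq \varprojlim_{i} \THH/\Fil^{\May}_{i}$ as functors $\CAlg_{\geq 0} \to \Sp$. Since the full subcategory $\Shv_{\mathrm{fpqc}}(\CAlg_{\geq 0}, \Sp) \subseteq \Fun(\CAlg_{\geq 0}, \Sp)$ is closed under small limits (limits of sheaves with values in a stable \category are sheaves, because both $\varprojlim$ over $\nn^{\op}$ and the \v{C}ech limit over $\Delta$ commute), and each $\THH/\Fil^{\May}_{i}$ is already known to be an fpqc sheaf by the preceding corollary, this shows that $\THH$ is an fpqc sheaf.

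For $\THH(-)_{hC_{p}}$, I would first record the analogue of the previous corollary, namely that each quotient $\THH(-)_{hC_{p}}/\Fil^{\May}_{i}(p)$ is an fpqc sheaf. The proof is identical to the $\THH$ case: the graded pieces $\gr^{\May}_{i}\THH(-)_{hC_{p}}$ are retracts of the fpqc sheaf $\gr^{\May}_{\ast}\THH(-)_{hC_{p}} \simeq \THH(H\pi_{\ast}(-))_{hC_{p}}$ (established just above), and then the same induction on the fiber sequences $\gr_{i}^{\May}\THH(-)_{hC_{p}} \to \THH(-)_{hC_{p}}/\Fil^{\May}_{i+1}(p) \to \THH(-)_{hC_{p}}/\Fil^{\May}_{i}(p)$ propagates the sheaf property through the tower. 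Combined with the Hausdorff and convergence statements for the $C_{p}$-homotopy orbit May tower (which follow from Proposition \ref{prop:THHDayCpConnectivity} exactly as in the $\THH$ case), we obtain $\THH(-)_{hC_{p}} \simeq \varprojlim_{i} \THH(-)_{hC_{p}}/\Fil^{\May}_{i}(p)$ as functors, and conclude again by closure of sheaves under limits.

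There is no serious obstacle here: all the real work has been done in the construction and analysis of the May filtration (Hausdorffness, convergence, symmetric monoidality of $\gr_{\ast}$ and $\ev_{0}$) and in the special case Proposition \ref{prop:SpecialCase}. The only point that deserves an explicit sentence is the closure of fpqc sheaves under limits, which is formal from the fact that limits commute with limits.
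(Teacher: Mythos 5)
Your proposal is correct and follows essentially the same route as the paper: express $\THH$ (resp. $\THH(-)_{hC_p}$) as the limit of the tower of quotients $\THH/\Fil^{\May}_i$ via Hausdorffness of the May filtration, and invoke closure of fpqc sheaves under limits together with the preceding corollary that each quotient is a sheaf. Your explicit treatment of the $hC_p$ case (recording the analogue of the corollary for the homotopy-orbit tower) fills in a step the paper dispatches with ``similarly,'' which is a welcome addition.
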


\begin{proof}
	Since limits of $\Sp$-valued sheaves can be computed in the category of $\Sp$-valued presheaves, the equivalence
	$$\THH \simeq \underset{i \geq 1}{\varprojlim} \left(\THH/\Fil^{\May}_{i}\right),$$
	\noindent guarantees $\THH$ is an fpqc sheaf. Similarly, since 
	$$\THH(-)_{hC_{p}} \simeq \underset{i \geq 1}{\varprojlim}\left(\THH(-)_{hC_{p}}/\Fil^{\May}_{i}(p)\right),$$
	\noindent  $\THH(-)_{hC_{p}}$ is an fpqc sheaf as well.
\end{proof}

\begin{cor}
	The functors $\THH(-)^{h\mathbb{T}}$, $\THH(-)^{hC_{p}}$, and $\THH(-)^{tC_{p}}$ are fpqc sheaves on $\CAlg_{\geq 0}$. 	
\end{cor}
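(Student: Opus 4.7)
The plan is to exploit the fact that the sheaf condition is a limit condition, so that $\Sp$-valued sheaves on $\CAlg_{\geq 0}^{\text{fpqc}}$ form a stable subcategory closed under arbitrary limits and (since we are in the stable setting) under cofibers of maps between sheaves. Given the previous theorem, all three functors can be built from $\THH$ and $\THH(-)_{hC_{p}}$ by applying such operations.

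First, I would handle $\THH(-)^{h\mathbb{T}}$ and $\THH(-)^{hC_{p}}$. By Remark on $\THHDay$ and the standard Nikolaus--Scholze setup, $\THH$ lifts to a functor $\CAlg_{\geq 0} \to \Sp^{B\mathbb{T}}$, and homotopy fixed points are computed as the limit
\[ \THH(-)^{hG} \simeq \lim_{BG}\THH(-) \]
for $G = \mathbb{T}$ or $G = C_{p}$ (using the restricted $C_{p} \subseteq \mathbb{T}$ action). Limits in presheaves are computed pointwise, and a pointwise limit of fpqc sheaves is again an fpqc sheaf because the sheaf condition commutes with the limit. Applying the previous theorem to $\THH$ then gives that $\THH(-)^{h\mathbb{T}}$ and $\THH(-)^{hC_{p}}$ are fpqc sheaves.

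For $\THH(-)^{tC_{p}}$, I would use the Tate cofiber sequence
\[ \THH(-)_{hC_{p}} \xrightarrow{\;\mathrm{Nm}\;} \THH(-)^{hC_{p}} \longrightarrow \THH(-)^{tC_{p}} \]
which is natural in the input. By the previous theorem, the left-hand term is an fpqc sheaf, and by the previous step the middle term is as well. Since the $\infty$-category of $\Sp$-valued fpqc sheaves on $\CAlg_{\geq 0}$ is a stable subcategory of presheaves (limits commute with finite colimits in any stable setting, so cofibers preserve the sheaf condition), the cofiber $\THH(-)^{tC_{p}}$ is an fpqc sheaf.

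There is no real obstacle here beyond bookkeeping: the nontrivial analytic input is the previous theorem that $\THH$ and $\THH(-)_{hC_{p}}$ are sheaves, together with the May-filtration machinery already deployed; the present corollary is a purely formal consequence of stability and the fact that fixed points and Tate constructions are built via limits and finite colimits. The only thing one has to be slightly careful about is that $\THH$ is naturally a functor to $\Sp^{B\mathbb{T}}$, so that taking $hG$ genuinely means a limit over $BG$ applied pointwise; this is exactly the framework already set up in the paper.
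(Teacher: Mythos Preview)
Your argument is correct and matches the paper's proof essentially verbatim: the paper also observes that homotopy fixed points are computed as limits (hence preserve the sheaf condition) and then uses the cofiber sequence $\THH(-)_{hC_{p}} \to \THH(-)^{hC_{p}} \to \THH(-)^{tC_{p}}$ together with stability to handle the Tate construction. There is nothing to add.
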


\begin{proof}
	The first two functors are fpqc sheaves because homtopy fixed points are calculated via limits. The third is an fpqc sheaf because there is a cofiber sequence 
	$$ \THH(-)_{hC_{p}} \rightarrow \THH(-)^{hC_{p}} \rightarrow \THH(-)^{tC_{p}}$$
	\noindent and $\THH(-)_{hC_{p}}$ and $\THH(-)^{hC_{p}}$ are fpqc sheaves.
\end{proof}

Using the results above, we will show that $\THH : \CAlg_{\geq 0} \rightarrow \CycSp$ is an fpqc sheaf. As a Corollary, we will show $\TC$ is as well. We now recall the construction of $\CycSp$ and the definition of topological cyclic homology. 

\begin{defn}\cite[II.1.6]{nikolaus-scholze}
	Let $\mathbb{P}$ denote the set of all primes. The \category of \emph{cyclotomic spectra} is the pullback
	\begin{center}
	\begin{tikzcd}
		\CycSp \arrow[r] \arrow[d] &   \Fun(\Delta^{1},\prod_{p\in \mathbb{P}} \Sp^{B\mathbb{T}}) \arrow[d, "(\ev_{0} ; \ev_{1})" ] \\
		\Sp^{B\mathbb{T}} \arrow[r,"F"'] &  \prod_{p} \Sp^{B\mathbb{T}} \times \prod_{p\in \mathbb{P}} \Sp^{B\mathbb{T}}
	\end{tikzcd}	
	\end{center}
	\noindent where the functor $F$ is given by the formula $F = \left(\prod_{p\in \mathbb{P}} id, \prod_{p \in \mathbb{P}} (-)^{tC_{p}}\right)$. By \cite[II.1.5]{nikolaus-scholze}, $\CycSp$ is stable and presentable, hence enriched over $\Sp$. The \emph{topological cyclic homology} of a cyclotomic spectrum, $X$, is the mapping spectrum
	$$\TC(X) = \Map_{\CycSp}(\mathbb{S}^{\text{triv}}, X),$$
	
	\noindent where $\mathbb{S}^{\text{triv}}$ is the sphere spectrum with the trivial $\mathbb{T}$-action and Frobenius maps given by the composite $\mathbb{T}$-equivariant maps $\sphere \rightarrow \sphere^{hC_{p}} \rightarrow \sphere^{tC_{p}}$; see \cite[I.2.3.i, II.1.2.ii]{nikolaus-scholze} for further details. As $\THH(A)$ admits the structure of a cyclotomic spectrum for all $A \in \Alg$ (\cite[III.2]{nikolaus-scholze}), we define $\TC(A) = \TC(\THH(A))$. 	
\end{defn}

\begin{prop}
	$\THH : \CAlg_{\geq 0} \rightarrow \CycSp$ is a fpqc sheaf. 	
\end{prop}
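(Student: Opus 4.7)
The plan is to reduce the statement to the fpqc sheaf property of $\THH: \CAlg_{\geq 0} \to \Sp$ already established above, by using that the underlying-spectrum functor $U: \CycSp \to \Sp^{B\mathbb{T}}$ is both conservative and preserves all small limits, and likewise the forgetful functor $\Sp^{B\mathbb{T}} \to \Sp$. Granted these, descent for $\THH$ valued in $\CycSp$ reduces to the corresponding claim in $\Sp$.

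To justify that $U$ preserves limits, I would note that $\CycSp$ is defined as a pullback of presentable $\infty$-categories, and projections from such a pullback automatically preserve small limits. For conservativity of $U$, suppose $\phi: X \to Y$ is a morphism in $\CycSp$ whose image $U(\phi)$ is an equivalence in $\Sp^{B\mathbb{T}}$. Unwinding the pullback, the other projection of $\phi$ lies in $\Fun(\Delta^{1}, \prod_{p} \Sp^{B\mathbb{T}})$ and is detected as an equivalence by its endpoints: the $\ev_{0}$-component agrees (via the pullback constraint) with $\prod_{p} U(\phi)$ and the $\ev_{1}$-component with $\prod_{p} U(\phi)^{tC_{p}}$, both of which are equivalences since $\prod_{p}$ and $(-)^{tC_{p}}$ send equivalences to equivalences. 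Hence $\phi$ itself is an equivalence in $\CycSp$.

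With these preliminaries in hand, let $f: A \to B$ be a faithfully flat map in $\CAlg_{\geq 0}$ and consider the natural comparison map
\[
\psi: \THH(A) \to \varprojlim_{\Delta}\THH(C^{\bullet}(f))
\]
in $\CycSp$. Since $U$ preserves limits, its image $U(\psi)$ is the analogous comparison map in $\Sp^{B\mathbb{T}}$; forgetting further to $\Sp$ yields the comparison map whose equivalence is the content of the previous theorem. By conservativity of the two forgetful functors, $\psi$ itself is an equivalence in $\CycSp$, which is exactly the desired sheaf property. The main technical point is the conservativity of $U$; this follows formally from the pullback definition of $\CycSp$ together with the functoriality of $(-)^{tC_{p}}$, and is essentially the content of \cite[II.1.5]{nikolaus-scholze}.
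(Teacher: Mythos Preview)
Your argument hinges on the claim that the forgetful functor $U\colon \CycSp \to \Sp^{B\mathbb{T}}$ preserves small limits, and this is where the gap lies. The assertion that projections from a pullback of presentable $\infty$-categories automatically preserve limits is not correct in general: the projection $A \times_B C \to A$ creates a given limit only when the leg $A \to B$ (or, from the other side, $C \to B$) preserves it. In the defining pullback for $\CycSp$, one leg involves $\prod_p (-)^{tC_p}$, which is exact but does not preserve infinite limits such as totalizations. Nor is this what \cite[II.1.5]{nikolaus-scholze} asserts: that result gives conservativity of $U$ and that $U$ preserves \emph{colimits} (because the source leg $\mathrm{id}$ does), not limits.

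Concretely, for your reduction to go through you would need the canonical map
\[
\Big(\varprojlim_\Delta \THH(C^\bullet(f))\Big)^{tC_p} \longrightarrow \varprojlim_\Delta \big(\THH(C^\bullet(f))^{tC_p}\big)
\]
to be an equivalence for each prime $p$; that is, you need $(-)^{tC_p}$ to commute with this particular totalization. But this is equivalent to knowing that $\THH(-)^{tC_p}$ is itself an fpqc sheaf, which is exactly the extra input you are bypassing. The paper's proof does not try to avoid it: it verifies that the composite of $\THH$ with each projection out of the pullback defining $\CycSp$ is a sheaf, and the projection to $\prod_p \Sp^{B\mathbb{T}} \times \prod_p \Sp^{B\mathbb{T}}$ unwinds precisely to the already-established sheaf property of $\THH$ and of $\THH(-)^{tC_p}$. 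Once all three projections send the augmented \Cech diagram to a limit diagram, the diagram in $\CycSp$ is a limit diagram; knowing only the projection to $\Sp^{B\mathbb{T}}$ is not enough.
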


\begin{proof}
	Because $\CycSp$ is defined as as lax equalizer, which is a strict pullback of $\infty$-categories, it suffices to check that the functors obtained from $\THH$ by post-composing with the projections are all fpqc sheaves. Indeed, we obtain the following three functors:
	
	\begin{enumerate}
		\item $\THH: \CAlg_{\geq 0} \rightarrow \Sp^{B\mathbb{T}}$;
		\item $\prod_{p} (\varphi_{p}(-)) : \CAlg_{\geq 0} \rightarrow \prod_{p} \Fun(\Delta^{1},\Sp^{B\mathbb{T}})$; and by identifying composites,
		\item $ \left(\prod_{p}\THH(-), \prod_{p} \THH(-)^{tC_{p}}\right) : \CAlg_{\geq 0} \rightarrow \prod_{p} \Sp^{B\mathbb{T}} \times \prod_{p} \Sp^{B\mathbb{T}}$.		
	\end{enumerate}
	
	\noindent The functor in (1) is an fpqc sheaf because $\THH : \CAlg_{\geq 0} \rightarrow \Sp$ is and limits in $\Sp^{B\mathbb{T}}$ are calculated pointwise. Note that the functor in (2) is a sheaf if the functor in (3) is as well. To show the functor in (3) is an fpqc sheaf, we can project onto the different factors and reduce to checking that $\THH$ and $\THH^{tC_{p}}$ are fpqc sheaves on $\CAlg_{\geq 0}$. Thus, the claim is proved.
\end{proof}

\begin{cor}
	$\TC : \CAlg_{\geq 0} \rightarrow \Sp$ is an fpqc sheaf.	
\end{cor}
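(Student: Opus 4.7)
The plan is to obtain $\TC$ as the post-composition of $\THH : \CAlg_{\geq 0} \to \CycSp$ with the functor $\Map_{\CycSp}(\sphere^{\text{triv}}, -) : \CycSp \to \Sp$. By definition, $\TC(A) = \Map_{\CycSp}(\sphere^{\text{triv}}, \THH(A))$, so this identification is immediate. The preceding proposition tells us that $\THH : \CAlg_{\geq 0} \to \CycSp$ sends a faithfully flat cover $A \to B$ to a limit diagram in $\CycSp$, so it remains only to argue that applying $\Map_{\CycSp}(\sphere^{\text{triv}}, -)$ preserves such limit diagrams.

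This is a general fact about mapping spectra: for any stable, presentable \category $C$ and any fixed $X \in C$, the functor $\Map_{C}(X, -) : C \to \Spaces$ preserves limits (it is a right adjoint to $(-) \otimes X : \Spaces \to C$), and this upgrades to a limit-preserving functor to $\Sp$ in the enriched setting, since $C$ is tensored over $\Sp$. Applying this to $C = \CycSp$ with $X = \sphere^{\text{triv}}$, we conclude that $\Map_{\CycSp}(\sphere^{\text{triv}}, -)$ preserves all small limits.

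Combining these, $\TC$ sends the augmented \Cech conerve of any faithfully flat $f : A \to B$ in $\CAlg_{\geq 0}$ to a limit diagram in $\Sp$, which is precisely the fpqc sheaf condition. There is no real obstacle here; the content is entirely in the preceding proposition. The only minor subtlety worth flagging is ensuring that the mapping spectrum in $\CycSp$ really does commute with the limit indexed by $N(\Delta)$ arising from \Cech descent, which follows from the presentability and stability of $\CycSp$ recorded in \cite[II.1.5]{nikolaus-scholze}.
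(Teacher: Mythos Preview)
Your proposal is correct and follows essentially the same route as the paper: both argue that $\TC = \Map_{\CycSp}(\sphere^{\text{triv}},\THH(-))$, invoke the preceding proposition that $\THH$ is an fpqc sheaf valued in $\CycSp$, and then use that the mapping spectrum out of a fixed object preserves limits. Your version just makes the limit-preservation step slightly more explicit.
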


\begin{proof}
	As $\TC(A) = \Map_{\CycSp}(\mathbb{S}^{\text{triv}},\THH(A))$, and $\THH$ is an fpqc sheaf valued in $\CycSp$, for $A \rightarrow B$ faithfully flat, we have an equivalence
	$$ \TC(A) \rightarrow  \Map_{\CycSp}(\mathbb{S}^{\text{triv}},\varprojlim \THH(B^{\otimes_{A} \bullet})) \simeq \varprojlim \Map_{\CycSp}(\mathbb{S}^{\text{triv}},\THH(B^{\otimes_{A} \bullet})) = \varprojlim \TC(B^{\otimes_{A} \bullet})$$
\end{proof}

\bibliography{thh_fpqc_descent_references}{}
\bibliographystyle{unsrt}

\end{document}